\DeclareMathOperator*{\argmax}{argmax}
\newtheorem{prop}{Proposition}[section]
\newtheorem{theorem}{Theorem}[section]
\newtheorem{lemma}{Lemma}[section]
\newtheorem{rem}{Remark}[section]
\newtheorem{definition}{Definition}[section]
\renewcommand{\leq}{\leqslant}
\renewcommand{\geq}{\geqslant}
\newcommand{\RR}{\mathbb{R}}
\newcommand{\R}{\mathbb{R}}
\newcommand{\N}{\mathbb{N}}
\newcommand{\UM}{U_M}
\newcommand{\Mc}{\mathcal{M}_\mathrm{c}}
\newcommand{\Scl}{\mathcal{S}_\mathrm{cl}}
\newcommand{\M}{\mathcal{M}}
\newcommand{\elts}{\{1,\ldots,N\}}
\renewcommand{\P}{\mathcal{P}}
\renewcommand{\div}{\mathrm{div}}
\newcommand{\UMkin}{\mathcal{U}_M}
\newcommand{\Wg}{\mathcal{W}_g}
\newcommand{\Rkin}{\mathcal{R}}
\newcommand{\V}{\mathcal{V}}
\newcommand{\Skin}{\mathcal{S}}
\newcommand{\supp}{\mathrm{supp}}
\newcommand{\bx}{\bar{x}}
\newcommand{\DDt}{\frac{d }{d t}}
\newcommand{\one}{\mathds{1}}
\newcommand{\Lip}{\mathrm{Lip}}
\begin{document} 

\title{Sparse control of Hegselmann-Krause models: Black hole and declustering}

\author{ Benedetto Piccoli\thanks{Department of Mathematical Sciences, Rutgers University - Camden, Camden, NJ. {\tt piccoli@camden.rutgers.edu}},
 Nastassia Pouradier Duteil\thanks{C\'er\'emade, Universit\'e Paris-Dauphine, Paris, France. {\tt n.pouradier@gmail.com}}, Emmanuel Tr\'elat\thanks{Sorbonne Universit\'e, Universit\'e Paris-Diderot SPC, CNRS, Inria, Laboratoire Jacques-Louis Lions, \'equipe CAGE, F-75005 Paris. {\tt emmanuel.trelat@upmc.fr}}}

\date{}

\maketitle

\begin{abstract}
This paper elaborates control strategies to prevent clustering effects in opinion formation models. 
This is the exact opposite of numerous situations encountered in the literature where, on the contrary, one seeks controls promoting consensus.
 In order to promote declustering, instead of using the classical variance that does not capture well the phenomenon of dispersion, we introduce an entropy-type functional that is adapted to measuring pairwise distances between agents.
We then focus on a Hegselmann-Krause-type system and design declustering sparse controls both in finite-dimensional and kinetic models. 
We provide general conditions
characterizing whether clustering can be avoided as function
of the initial data. Such results include the description of black
holes (where complete collapse to consensus is not avoidable),
safety zones (where the control can keep the system far from
clustering), basins of attraction (attractive zones around the
clustering set) and collapse prevention (when convergence to the clustering set can be avoided).

\paragraph{Keywords:} Active particles, collective behavior, control, kinetic model, declustering, black swan.

\paragraph{AMS Subject Classifications:} 93C15, 93C20, 91D10, 35B36, 34H05.
\end{abstract}

\section*{Introduction}

The term ``black swan'' was first used by Nassim Nicholas Taleb in 2007 in his book \textit{The Black Swan: The Impact of the Highly Improbable} \cite{Taleb10}, in which he focuses on the extreme impact of rare and unpredictable events. The ``black swan theory'' was since then developed to describe events that are extremely rare, have a massive impact and are retrospectively predictable. 
One of the groundbreaking ideas of this recent theory is the fact that human behavior remains unpredictable. By focusing on what is known and probable, scientists tend to be surprised by major unexpected events. Taleb's philosophy requires one to accept the fact that there will always remain unknown factors - hence, one cannot make future predictions based only on the assumption of a population's rational behavior.

Bellomo et al. \cite{BHT12} have built upon this theory, applying it to the context of social competition that can lead to extreme conflicts. 
Their work is based upon the fact that individual behavior, whether rational or irrational, contributes in a nonlinear fashion to the global group behavior.
Then, even among an initially well distributed population, local social interactions can lead to unexpected outcome. 

Social Dynamics models are particularly suited to describe these kinds of phenomena, as they focus on understanding how \textit{self-organization} emerges from interactions of individual ``agents'', or ``active particles'' \cite{ACMPPRT17}. 
The study of collective behavior emerging from local interactions is actually of great interest to a mixed community of mathematicians, biologists, sociologists, economists and engineers. 
These models are indeed applicable to a wide variety of fields. In biology, they are used to understand the behavior of large animal groups 
\cite{BCC08, CCR11, CCGPSSV10, CKFL05, PE99, VCABC95, YEECBKM09}.
Engineering applications involve robot formation and satellite synchronization \cite{B09, JLM03, MMLK10, PEG09, SBS10, SPL2007}.
Models also apply to socio-economic problems such as population dynamics, opinion formation and market evolution \cite{BHT12, HK02, Kirman2000, MBE08, S82, Taleb10}.
As pointed out in \cite{VRSM12}, individual behavior, especially human, is often irrational: instead of making strategic decisions, individuals tend to imitate social neighbors. This behavior leads to clustering of opinions or even consensus (agreement of all state variables). Many models reproduce this phenomenon. In \cite{VRSM12}, this is modeled in a game-theoretic set-up, where agents play coordination games to improve their individual payoff. In the Voter model, agents imitate the action of a randomly selected counterpart \cite{HL75}. In the Hegselmann-Krause (HK) bounded-confidence model, agents imitate others' behavior only if they are within a certain ``confidence'' radius \cite{HK02}. In a competing approach, based on the so-called ``topological'' distance, agents imitate a given number of closest neighbors \cite{BCC08}. Another variation of the HK model consists of noticing that heterophilious dynamics enhance consensus \cite{MoTa14}.

Multi-agent systems can be described from a microscopic point of view, by considering a system of coupled (often nonlinear) ODE's \cite{CFPT13,CFPT15, CS07, L13}. However, as the dimension of the system increases, studying and simulating it becomes a harder challenge, a phenomenon known as the \textit{curse of dimensionality}. When the number of agents tends to infinity, one can take the \textit{mean-field limit} of the system resulting in a kinetic model, where the population is described by a density measure, and its evolution is given by a unique PDE. The mean-field limits of the Hegselmann-Krause, Vicsek and Cucker-Smale models were respectively derived in \cite{CFT08, DM08, HT08}. Since then, kinetic formulations of Social Dynamics models have been the focus of many more works, see for example \cite{BP14, CCR11, CFTV10, DM08_2, FS14, LW11, PRT15}.

Self-organization 
has thus been extensively studied, especially focusing on the emergence of patterns such as consensus or alignment that arise from inherent properties of certain dynamics.
When consensus is not reached by the system, it is natural to ask whether it can be achieved by controlling it. Such control problems have been investigated in finite dimensional systems  \cite{CFPT13, CFPT15, CPRT17, L13} and in kinetic models \cite{CPRT17-2, PPS15}.
Applications involve rendez-vous problems in robotics, and flock formation in animal crowd behavior.  
However, as seen in \cite{B09}, the states of consensus or clustering are not always desirable as they can be seen as a manifestation of Black Swan effects.
Therefore we choose to study the opposite problem: given dynamics naturally leading to consensus, we aim to, at the contrary, control the system to avoid consensus and clustering, \textit{i.e.}, to keep the agents as far from one another as possible. Possible motivations include keeping a market from collapsing or a crowd from converging to a localized dense conformation.
While typical control problems applied to social dynamics models aim to steer the system to consensus, which is a natural feature of the dynamics, here instead we aim to drive the system against its natural behavior. The key is then to understand the interplay between the internal driving force of the system and the external applied control. It is natural to expect that the feasibility of the system will depend on the allowed strength of the control and on the nature of the interaction function. Indeed, the main results of this paper will highlight in particular the existence of internal attraction so strong that no control can act on the system: we will refer to such cases as ``black holes''.

We study a first-order opinion formation model with a positive interaction function $a(\cdot)$ and control the system via an additive term:
$$
\dot{x}_i(t) = \frac{1}{N}\sum_{j\neq i} a(\|x_i(t)-x_j(t)\|)(x_j(t)-x_i(t)) + u_i(t), \quad i\in\elts.
$$
In its mean-field limit, \textit{i.e.} when $N$ tends to infinity, we study the kinetic model
$$
\partial_t\mu + \div\left( \left(\int_{\R^d} a(\|x-y\|)(y-x)d\mu(y)+\chi_\omega u\right)\mu \right) = 0.
$$
All terms and assumptions for both finite-dimensional and kinetic formulations are rigorously defined later in Sections \ref{Sec:finite} and \ref{Sec:kinetic}.
The control strategies used to steer the system away from clustering are simple explicit feedbacks depending on the state of the system at the current time. 
In the microscopic model, the control can be seen as an exterior force acting on the system to separate the agents from one another. For realistic purposes, we assume that we can only act on the system with finite strength, and we impose a constraint on the $\ell^1-\ell^2$ norm of the control: $\sum_i \|u_i\|\leq M$ for some $M>0$. This constraint is known to promote sparsity (see \cite{CFPT13}), so that
we promote controls that act on fewer agents at a time.
In the macroscopic equation, 
we consider the class of controls $\chi_\omega u$, where $u\in L^\infty(\R^+\times\R^d)$ and for all $t\geq 0$, $\omega(t)$ is a measurable subset of $\R^d$ (and $\chi$ is the indicator function). The control is thus constrained in two ways, as for the microscopic case.
The amplitude $u$ is constrained by the condition $\|u\|_{L^\infty(\R^+\times \R^d)}\leq M$. 
The spacial sparsity of the control is ensured by the condition $\int_{\omega(t)} dx\leq c$ for some $c>0$. This particular choice of sparsity constraint means that we only allow the control to act on a given area of space. Instead, one could consider constraining the portion of the population being controlled, as done for instance in \cite{PRT15} for the control to flocking of the kinetic Cucker-Smale model.

We show that to prevent the formation of consensus, the controls may be chosen to maximize the derivative of the variance of the system. However this strategy is not enough to achieve the stronger requirement of avoiding any degree of clustering. We show that the state of clustering may be achieved thanks to a different entropy-type functional  
 that measures the \emph{dispersion} of the system.
Depending on the behavior of the interaction function $a(\cdot)$, several situations may arise. If $\lim_{s\rightarrow 0} sa(s) = + \infty $, there exists a ``\textit{black hole}'' region, in which no control can prevent the system from converging.
In contrast, if $\lim_{s\to 0} sa(s) = 0$, collapse to consensus can always be avoided.  
Far from the consensus manifold, we also observe two scenarios. 
If $\lim_{s\rightarrow +\infty} sa(s) = 0 $, there exists a ``\textit{safety zone}'' in which the control can always keep the system far from consensus. This safety zone does not exist if $\lim_{s\rightarrow +\infty} sa(s)=+\infty$, as the system converges to a ``\textit{basin of attraction}''.

We summarize these results in Table \ref{table:summary}, giving criteria depending on $\alpha$ and $\bar{s}$, where $\alpha:=\lim_{s\to\bar{s}}s a(s)$. 

 \begin{table}[H] 
\centering
 \begin{tabular}{|c|p{6cm}|p{6cm}|}
 \hline
 & {\centering $\bar{s} = 0$} & {\centering $\bar{s} = +\infty$}\\
 \hline
 $\alpha = 0$ & There exists a \textbf{collapse prevention} control strategy  & There exists a \textbf{safety region} far from consensus  \\
 \hline
 $\alpha = +\infty $ & There exists a \textbf{black hole} (no strategy can avoid consensus for certain initial configurations)& There exists a \textbf{basin of attraction } (no safety zone far from consensus) \\
 \hline
 \end{tabular}
 \caption{Four different configurations determined by $\alpha=\lim_{s\to\bar{s}}s a(s)$}\label{table:summary}
\end{table}

The paper is divided into three parts: in the first one, we consider a microscopic description of the system adapted from the Hegselmann-Krause opinion formation model. Secondly, we study the kinetic version of this model by taking the mean-field limit of the system, and we adequately extend the results of the microscopic description to the kinetic setting. Lastly, we provide numerical simulations illustrating the four cases presented in Table \ref{table:summary}.

\section{Microscopic model (finite-dimension)} \label{Sec:finite}

\subsection{Generalized entropy functional for declustering control} \label{Sec:Entropy}

Consider the 
general class of first-order 
differential 
systems:
\begin{equation}\label{eq:gensyst0}
\dot x_i = f_i(x), \quad i\in \elts
\end{equation}
where for each $i\in\elts$, $x_i(t)\in\mathbb{R}^d$. The dynamics are given by the functions $f_i\in C^1((\RR^d)^n,\RR^d)$.

The purpose of this work is to study the collective behavior of the system, focusing on patterns such as \textit{consensus} and \textit{clustering}. More specifically, we aim to design feedback control strategies to prevent the system from reaching those states. We first provide the general definitions that will be used hereafter.

\begin{definition}
The state characterized by  $x_1=...=x_N$ is referred to as \textbf{consensus}.
We denote by $\Mc$ the \textbf{consensus manifold} defined by:
\begin{equation}
\Mc := \{(x_i)_{i\in\{1,\ldots,N\}}\; | \; \forall (j,k)\in\{1,\ldots,N\}^2, \; x_j=x_k\}.
\end{equation}
\end{definition}

\begin{rem}
If the dynamics satisfy $f_i(x)=0$ for every $i\in\elts$ and for all $x\in\Mc$, 
the consensus state is an equilibrium.
\end{rem}

Notice that if at least two agents have different states, for instance if $x_i\neq x_j$ for some $i,j\in\{1,\ldots,N\}^2$, then the system is not in consensus. 
\begin{definition}\label{def:dispersion}
The system is said to \textbf{avoid consensus} if there exist $(i,j)\in\{1,\ldots,N\}^2$ such that $x_i\neq x_j$.
\end{definition}

However, avoiding consensus might still leave the system in the critical state where several agents have the same state variable, which might be unwanted in some real-life situations. For instance, if each $x_i$ represents an investor's decision, consensus might lead to a market crash. Whether or not the system is exactly in consensus state has little impact on the outcome: if one investor thinks differently than the mass (e.g. $x_j\neq x_1 = \ldots = x_{j-1} = x_{j+1} = \ldots = x_N$), it might not be enough to prevent a market collapse. 
With such applications in mind, we define clustering as follows:

\begin{definition}\label{def:clustering}
The system is said to be \textbf{clustered} if there exist $(i,j)\in\{1,\ldots,N\}^2$ such that $x_i=x_j$. We denote by $\Scl$ the \textbf{clustering set}, defined by:
\begin{equation}
\Scl := \{(x_i)_{i\in\{1,\ldots,N\}}\; | \; \exists (j,k)\in\{1,\ldots,N\}^2 \text{ s.t. } x_j=x_k\}.
\end{equation}
\end{definition}

Since we will focus on the avoidance of clustering, we characterize it as follows:

\begin{definition}
We say that the system is \textbf{fully declustered} if there exists $\epsilon>0$ such that for all $(i,j)\in\{1,\ldots, N\}^2$, $\|x_i-x_j\|\geq\epsilon$.
\end{definition}

Notice that the condition of avoiding consensus is weaker than the condition of declustering. 
The system is said to avoid consensus if it is not in a neighborhood of the consensus manifold. 
More constraining, the condition of declustering is satisfied if and only if the system is outside of a neighborhood of a larger manifold, that we refer to as the \textbf{clustering} set.

The consensus manifold $\Mc$ is thus contained in the clustering set $\Scl$. More specifically, $\Mc$ is a $d-$dimensional manifold embedded in $(\RR^d)^N$, while $\Scl$ is a stratified set in the sense of Whitney (see Figure \ref{fig:consensusmanifold}). We recall that
a set $E\subset \RR^n$ is called \textbf{stratified} in the sense of Whitney if there exists a countable (locally finite) collection of pairwise disjoint manifolds $(\M_i)_{i\in\N}$ such that: 
\begin{enumerate}
\item $\M_i$ is an embedded manifold of dimension $d_i$
\item If $\M_i\cap\partial\M_j\neq \emptyset$, then $\M_i\subset\partial\M_j$ and $d_i<d_j$.
\end{enumerate}

\begin{figure}[h!]
\centering
\includegraphics[scale=1]{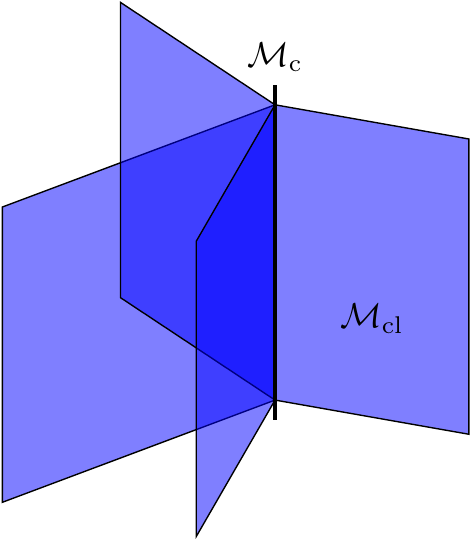}
\caption{Schematic representation of the consensus manifold $\Mc$ (black vertical line) contained in the stratified clustering set $\Scl$ (blue).}\label{fig:consensusmanifold}
\end{figure}

\paragraph{Reminders on consensus achievement.} 
Controlling a group of agents to steer it to consensus has been considered in the literature (see \cite{CFPT13,CFPT15,CPRT17-2,CPRT17, PPS15,PRT15, SCB15}).
One common approach consists of modifying system \eqref{eq:gensyst0} to include an additive control $u\in\UM$:
\begin{equation}\label{eq:gensyst}
\dot x_i = f_i(x) + u_i,\qquad i\in\elts.
\end{equation}
Given $M>0$, we define the set of controls as 
\begin{equation}\label{eq:UM}
\UM:=\big\{ u:\R^+\rightarrow (\R^d)^N \; \big | \; u \text{ measurable, } \sum_{i=1}^N\|u_i(t)\|\leq M \text{ for a.e. } t\in\R^+ \big\}
\end{equation}
where $\|\cdot\|$ is the $\ell_2^d-$Euclidean norm on $\R^d$.
The condition $\sum_{i=1}^N\|u_i(\cdot)\|\leq M$, known as the $\ell_1^N-\ell_2^d$-norm constraint, promotes the componentwise sparsity of the control \cite{CFPT13}.

Previous works (see \cite{CFPT13, CFPT15, PPS15}) have proposed to construct feedback controls by minimizing the variance of the system $V : \R^{dn}\rightarrow\R $, with 
\begin{equation}\label{eq:V}
V(x) = \frac{1}{2N^2}\sum\limits_{i=1}^N\sum\limits_{j=i+1}^N\|x_i-x_j\|^2 \quad \text{ for all } x\in\R^{dn}.
\end{equation}
It is easy to show that the variance characterizes the state of consensus:
\begin{lemma}\label{lemma:variance}
Let $(x_i)_{i\in\{1,\ldots,N\}}\in(\RR^d)^N$, and let $V$ be defined by Equation \eqref{eq:V}.
The system $(x_i(t))_{i\in\{1,\ldots,N\}}$ is in the state of consensus if and only if $V(x(t))=0$.
\end{lemma}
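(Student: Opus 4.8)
The plan is to exploit the fact that, up to the strictly positive prefactor $1/(2N^2)$, the functional $V$ is simply a sum of squared Euclidean norms, and is therefore a sum of nonnegative terms. The key structural observation is that a finite sum of nonnegative reals vanishes if and only if each summand vanishes, so the analysis reduces entirely to understanding when a single term $\|x_i-x_j\|^2$ is zero. Since $\|\cdot\|$ is a genuine norm on $\R^d$, the term $\|x_i-x_j\|^2$ equals zero precisely when $x_i=x_j$.

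With this in hand I would argue both implications directly. For the easy direction, suppose the system is in consensus, i.e. $x_1=\cdots=x_N$; then $x_i-x_j=0$ for every pair, each summand in the definition \eqref{eq:V} of $V$ vanishes, and hence $V(x)=0$. For the converse, suppose $V(x)=0$. Because every summand $\|x_i-x_j\|^2$ is nonnegative and the prefactor is positive, the vanishing of the total sum forces $\|x_i-x_j\|^2=0$ for all indices $i<j$, whence $x_i=x_j$ for all such pairs. Since equality of state variables is symmetric and reflexive, the indexing over $i<j$ already covers all distinct pairs $(j,k)\in\elts^2$, so we conclude $x_1=\cdots=x_N$, which is exactly consensus in the sense of the consensus manifold $\Mc$.

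I expect no genuine obstacle here: the statement is elementary and follows from positivity of the coefficient together with positive-definiteness of the norm. The only point deserving a word of care is the passage from ``$x_i=x_j$ for all $i<j$'' to full consensus $x_1=\cdots=x_N$, which relies on transitivity of equality across the chain of indices; stating this explicitly makes the equivalence with the set-theoretic description of $\Mc$ unambiguous.
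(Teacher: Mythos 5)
Your proof is correct and is exactly the elementary argument the paper has in mind: the paper states this lemma without proof (introducing it with ``It is easy to show''), and your argument --- positivity of the prefactor, vanishing of a finite sum of nonnegative terms forcing each summand to vanish, and positive-definiteness of the Euclidean norm giving $x_i=x_j$ for all pairs --- is the standard justification being omitted. Your explicit remark on passing from pairwise equality over $i<j$ to full consensus via symmetry and transitivity is a fine point of care, but there is no divergence from the paper's approach to report.
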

Hence one can choose to design a feedback control strategy by minimizing the time derivative of the variance. Along any trajectory, it satisfies:
\begin{equation*}
\dot{V}(x(t)) = \frac{1}{2N^2}\sum\limits_{i=1}^N\sum\limits_{j\neq i}\langle x_i-x_j, \dot{x}_i-\dot{x}_j\rangle  
= \frac{1}{N}\sum\limits_{i=1}^N\langle (x_i-\bar{x}), f_i(x)+u_i\rangle.
\end{equation*}
As done in \cite{CFPT13} for the Cucker-Smale alignment model, one can easily define a feedback control $u^c$ minimizing the derivative of the variance by setting $i_V:= \argmax\limits_{i\in\{1,\ldots,N\}}\|x_i-\bar{x}\|$ so that:
\begin{equation*}
u_i^c =  \begin{cases}
-M\frac{x_i-\bar{x}}{\|x_i-\bar{x}\|} \; \text{ for } i=i_V \\
0 \; \quad \text{ for all } i\neq i_V.
\end{cases}
\end{equation*}

\paragraph{Searching for a functional to promote declustering.}
On the opposite, the purpose of this work is to design a control strategy leading the system away from clustering. 
Our first approach is to design a control strategy \textit{maximizing} the time derivative of the variance (as opposed to minimizing it when aiming to achieve consensus). 

\begin{prop}\label{prop:contV}
Let $M>0$ and let $u\in\UM$.
For all $i\in\{1,\ldots,N\}$, let $R_i:=x_i-\bar{x}$, where $\bar{x}:=\frac{1}{N}\sum_{j=1}^N x_j$.
Let $i_V:= \argmax\limits_{i\in\{1,\ldots,N\}}\|R_i\|$.
The control $u^V$ defined by 
\begin{equation}\label{eq:contV}
u^V_i =  \begin{cases}
M\frac{R_{i}}{\|R_{i}\|} \; \text{ for } i=i_V \\
0 \; \text{ for all } i\neq i_V
\end{cases}
\end{equation}
maximizes $\dot{V}$ instantaneously.
\end{prop}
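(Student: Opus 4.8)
The plan is to exploit the additive splitting of $\dot V$ already recorded in the expression preceding the statement. Writing $R_i = x_i - \bar x$, we have
\begin{equation*}
\dot V(x(t)) = \frac{1}{N}\sum_{i=1}^N \langle R_i, f_i(x)\rangle + \frac{1}{N}\sum_{i=1}^N \langle R_i, u_i\rangle.
\end{equation*}
The first sum does not involve $u$, so maximizing $\dot V$ instantaneously over the admissible set $\UM$ is equivalent to maximizing the linear functional $L(u) := \sum_{i=1}^N \langle R_i, u_i\rangle$ subject to the single coupling constraint $\sum_{i=1}^N \|u_i\| \leq M$. First I would isolate this reduced static problem at a fixed time, with the state $x$ (hence every $R_i$) frozen; this turns the question into a linear optimization over the $\ell_1^N$--$\ell_2^d$ ball.

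Next I would establish the sharp upper bound on $L$. By the Cauchy--Schwarz inequality applied componentwise, $\langle R_i, u_i\rangle \leq \|R_i\|\,\|u_i\|$ for every $i$, whence
\begin{equation*}
L(u) \leq \sum_{i=1}^N \|R_i\|\,\|u_i\| \leq \Big(\max_{i\in\elts}\|R_i\|\Big)\sum_{i=1}^N \|u_i\| \leq M\,\|R_{i_V}\|,
\end{equation*}
where the middle step bounds each $\|R_i\|$ by its maximum and the last step uses the norm constraint together with the definition $i_V = \argmax_i \|R_i\|$. This identifies $M\|R_{i_V}\|$ as an a priori ceiling for the control contribution to $\dot V$ over all of $\UM$.

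Finally I would verify that the proposed feedback $u^V$ is admissible and attains this ceiling, which yields optimality. Since $u^V$ concentrates all its mass on the single index $i_V$, the vector $u^V_{i_V} = M R_{i_V}/\|R_{i_V}\|$ has Euclidean norm $M$, so $\sum_i \|u^V_i\| = M$ and $u^V \in \UM$; moreover $L(u^V) = \langle R_{i_V}, u^V_{i_V}\rangle = \tfrac{M}{\|R_{i_V}\|}\langle R_{i_V}, R_{i_V}\rangle = M\|R_{i_V}\|$, meeting the bound with equality. Hence $u^V$ maximizes $\dot V$ instantaneously. The one delicate point — really the only obstacle worth flagging — is the degenerate case $\|R_{i_V}\| = 0$, in which every $R_i$ vanishes; by Lemma \ref{lemma:variance} this is exactly the consensus state, the direction $R_{i_V}/\|R_{i_V}\|$ is undefined, and the ceiling $M\|R_{i_V}\| = 0$ is then trivially attained by any admissible control, so the formula for $u^V$ is to be understood away from $\Mc$, which is in any case the regime of interest for declustering.
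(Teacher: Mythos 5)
Your proposal is correct and follows essentially the same route as the paper: both reduce $\dot V$ to the expression $\frac{1}{N}\sum_{i=1}^N\langle R_i, f_i(x)+u_i\rangle$ and maximize the control-dependent linear term over the $\ell_1^N$--$\ell_2^d$ ball. You merely make explicit two points the paper leaves implicit — the Cauchy--Schwarz chain $L(u)\leq M\|R_{i_V}\|$ with verification of attainment, and the degenerate consensus case $\|R_{i_V}\|=0$ — which is a sound and welcome sharpening, not a different argument.
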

\begin{proof}
We study the time evolution of the variance $V(x(t))=\frac{1}{2N^2}\sum\limits_{i=1}^N\sum\limits_{j=1+1}^N\|x_i(t)-x_j(t)\|^2=\frac{1}{4N^2}\sum\limits_{i=1}^N\sum\limits_{j\neq i} \|x_i(t)-x_j(t)\|^2$. 
Along any trajectory, we have:
\begin{equation*}
\begin{split}
\dot{V} 
&  = \frac{1}{2N^2}\sum\limits_{i=1}^N\sum\limits_{j\neq i}\langle x_i-x_j, f_i(x)-f_j(x)+u_i-u_j\rangle \\
 & = \frac{1}{2N^2}\sum\limits_{i=1}^N\sum\limits_{j\neq i}\langle x_i-x_j, f_i(x)+u_i\rangle -\frac{1}{2N^2}\sum\limits_{i=1}^N\sum\limits_{j\neq i}\langle x_i-x_j, f_j(x)+u_j\rangle \\
 & = \frac{1}{N^2}\sum\limits_{i=1}^N\sum\limits_{j\neq i}\langle x_i-x_j, f_i(x)+u_i\rangle = \frac{1}{N^2}\sum\limits_{i=1}^N\langle\sum\limits_{j=1}^N (x_i-x_j), f_i(x)+u_i\rangle = \frac{1}{N}\sum\limits_{i=1}^N\langle (x_i-\bar{x}), f_i(x)+u_i\rangle.
\end{split}
\end{equation*}
Hence, denoting by $i_V:= \argmax\limits_{i\in\{1,\ldots,N\}}\|x_i-\bar{x}\|$, $\dot{V}$ is maximized at all time by the control given by \eqref{eq:contV}.
\end{proof}

%
Notice that maximizing the variance $V$ will only ensure that the system is far from the consensus manifold, and it does not guarantee declustering. Indeed, $V$ can be very large even if almost all agents are concentrated at one point, as long as one agent is far from the group. 
This calls for the need of a different functional, able to characterize the state of clustering like the variance characterizes consensus.
A natural candidate for that purpose is the entropy functional $W\in C^1(\RR^d\setminus \Scl,\RR)$, defined as follows: 
\begin{equation}\label{eq:W}
W(x) = \frac{1}{N^2}\sum\limits_{i=1}^N\sum\limits_{j=i+1}^N \ln \Vert x_i-x_j\Vert.
\end{equation}
Indeed, if the system is not in the clustering set, the entropy is bounded from below. However, the converse is not true, as we show in the following: 
 \begin{lemma}\label{lemma:W}
 Let $W\in C^1(\RR^d\setminus \Scl,\RR)$ defined by \eqref{eq:W}. If for all $(i,j)\in\elts^2$, $\Vert x_i-x_j\Vert\geq \epsilon$ for some $\epsilon>0$, then $W(x)$ is bounded below, \textit{i.e.} there exists $K(\epsilon)\in\RR$ such that $W(x)>K(\epsilon)$.
 However, the converse does not hold. 
 \end{lemma}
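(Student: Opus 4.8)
The plan is to treat the two assertions separately: the lower bound is a one-line estimate, whereas the failure of the converse is the substantive point and is established by an explicit family of configurations.

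For the lower bound I would bound each summand of \eqref{eq:W} individually. Under the hypothesis $\|x_i - x_j\| \geq \epsilon$, monotonicity of $\ln$ gives $\ln\|x_i - x_j\| \geq \ln\epsilon$ for every pair $i < j$. Since there are exactly $\binom{N}{2} = \frac{N(N-1)}{2}$ such pairs, summing yields
$$W(x) \geq \frac{1}{N^2}\cdot\frac{N(N-1)}{2}\,\ln\epsilon = \frac{N-1}{2N}\,\ln\epsilon,$$
which is finite for every fixed $\epsilon > 0$; taking $K(\epsilon)$ strictly below this value (for instance $K(\epsilon) = \frac{N-1}{2N}\ln\epsilon - 1$) gives the strict inequality as stated. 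Note that this bound is negative when $\epsilon < 1$ and positive when $\epsilon > 1$, but it is finite in all cases, which is all that is claimed.

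For the converse, the key observation is that $W$ is an \emph{additive} combination of logarithms, so a term $\ln\|x_i - x_j\| \to -\infty$ arising from a collapsing pair can be offset by other terms $\ln\|x_k - x_\ell\| \to +\infty$ arising from diverging pairs. The plan is therefore to exhibit a one-parameter family of configurations arbitrarily close to $\Scl$ on which $W$ nonetheless stays bounded below (in fact diverges to $+\infty$). Concretely, for $N = 3$ on the line I would take $x_1 = 0$, $x_2 = \delta$ and $x_3 = 1/\delta$ with $\delta \in (0,1)$. The three pairwise distances are $\delta$, $1/\delta$ and $1/\delta - \delta$, so that
$$W(x) = \frac{1}{9}\Big(\ln\delta + \ln\tfrac{1}{\delta} + \ln(\tfrac{1}{\delta}-\delta)\Big) = \frac{1}{9}\,\ln\!\big(\tfrac{1}{\delta}-\delta\big).$$
As $\delta \to 0^{+}$ the right-hand side tends to $+\infty$, while $\|x_1 - x_2\| = \delta \to 0$. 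Hence for any prescribed $\epsilon > 0$ one can choose $\delta < \epsilon$ so that the configuration violates the declustering condition yet $W(x)$ exceeds any given lower bound, disproving the implication.

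The only point requiring care is the bookkeeping in this construction: the far pair must diverge fast enough to dominate the logarithmic singularity of the close pair. Any divergence rate faster than the collapse rate works; the choice $x_3 = \delta^{-1}$ is convenient because the two $\ln\delta$ contributions cancel exactly, leaving a single explicitly divergent term. The three-agent case already suffices, but I would remark that the same mechanism extends to general $N$ by isolating one distinguished far-away agent and letting the remaining agents cluster. Conceptually, this is precisely the reason $W$ fails to characterize clustering the way the variance $V$ characterizes consensus in Lemma \ref{lemma:variance}: $W$ behaves like an average of log-distances rather than detecting the minimal pairwise distance, and will motivate the modified functional introduced subsequently.
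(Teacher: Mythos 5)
Your proof is correct, and for the substantive half (the failure of the converse) it takes a genuinely different route from the paper. The paper argues abstractly: starting from any configuration with $W(x)\geq K$, it modifies two pairwise distances in a compensating way --- contracting one pair by a factor $1/\beta$ while dilating another by $\beta$ --- so that $W$ is \emph{exactly unchanged}, then chooses $\beta$ to force one pair below any prescribed $\epsilon$, contradicting the uniform implication \eqref{eq:Wconverse}. Your approach instead exhibits an explicit one-parameter family ($x_1=0$, $x_2=\delta$, $x_3=1/\delta$ for $N=3$, which embeds in $\RR^d$ for any $d$) on which $W=\tfrac19\ln(\tfrac1\delta-\delta)\to+\infty$ while the minimal pairwise distance $\delta\to 0$; since for any $K$ and any candidate $\epsilon$ one picks $\delta<\epsilon$ small enough that $W\geq K$, the converse fails. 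Each approach buys something: the paper's invariance trick shows the failure near \emph{any} configuration with large entropy, but it silently assumes a configuration $\tilde x\in(\RR^d)^N$ realizing the two modified distances with all others held fixed exists --- a nontrivial embeddability point (triangle-inequality constraints) that the paper does not verify; your explicit family sidesteps this entirely and even proves the stronger statement that $W$ can diverge to $+\infty$ while approaching $\Scl$. Two small remarks: your extension to general $N$ (cluster $N-1$ agents at scale $\delta$, one agent at distance $\delta^{-p}$) does work, but only for $p>(N-2)/2$, since the cluster contributes $\tfrac{(N-1)(N-2)}{2}\ln\delta$ against $(N-1)p\ln(1/\delta)$ from the far pairs --- worth stating, since "any divergence rate faster than the collapse rate" is not quite accurate for large $N$; and note that both your argument and the paper's implicitly require $N\geq 3$ (for $N=2$, $W=\tfrac14\ln\|x_1-x_2\|$ and the converse actually holds), a restriction the lemma's statement leaves tacit.
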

 \begin{proof}
Let $\epsilon>0$ and suppose that for all $(i,j)\in\elts^2$, $\Vert x_i-x_j\Vert\geq \epsilon$. Then 
$$
W(x) \geq \frac{1}{N^2}\sum\limits_{i=1}^N\sum\limits_{j=i+1}^N \ln \epsilon = \frac{N(N-1)}{2N^2}\ln \epsilon .
$$
We now disprove the converse. Let $K>0$. Suppose that 
\begin{equation}\label{eq:Wconverse}
 W(x)\geq K \Rightarrow \exists \epsilon>0 \text{ s.t. } \forall (i,j)\in\elts^2 \text{ s.t. } i<j,  \quad \Vert x_i-x_j\Vert\geq \epsilon,
\end{equation}
where $\epsilon$ does not depend on $x$.
Let $x\in(\RR^d)^N$ such that $W(x)\geq K$.
Let $\beta>1$ and consider $\tilde{x}\in\RR^d$ such that 
\begin{equation*}
\begin{cases}
\Vert \tilde{x}_k- \tilde{x}_l\Vert = \frac{1}{\beta} \Vert x_k-x_l\Vert \\
\Vert \tilde{x}_m- \tilde{x}_n\Vert = \beta \Vert x_m-x_n\Vert \\
\Vert \tilde{x}_i- \tilde{x}_j\Vert = \Vert x_i-x_j\Vert \quad \text{ for all } (i,j)\in\elts^2, \quad i<j, \quad (i,j)\neq (k,l)  \text{ and } (i,j)\neq (m,n).  
\end{cases}
\end{equation*}
Then 
$$
W(\tilde{x}) = W(x) +\frac{1}{N^2} (- \ln( \Vert x_k-x_l\Vert) -  \ln( \Vert x_m-x_n\Vert)  + \ln(\Vert \tilde{x}_k- \tilde{x}_l\Vert) + \ln(\Vert \tilde{x}_m- \tilde{x}_n\Vert) )= W(x). 
$$
Hence $W(\tilde{x})\geq K$ and this result holds independently of $\beta$. Let $\beta = \frac{2\Vert x_k-x_l\Vert}{\epsilon}$. Then $\Vert \tilde{x}_k- \tilde{x}_l\Vert = \frac{\epsilon}{2}$, which contradicts \eqref{eq:Wconverse}.
 \end{proof}
 
Lemma \ref{lemma:W} shows that the functional $W$ cannot characterize the boundedness away from the clustering set. This is due to the fact that the logarithm is unbounded both at zero and at infinity, which allows for the contribution of  small pairwise distances to be compensated by that of large pairwise distances in \eqref{eq:W}.

\paragraph{A good entropy functional for declustering.}
The main idea then is to modify the entropy functional
by replacing the logarithm by a function $g$ bounded at
infinity, in order to characterize clustering.

\begin{definition}\label{def:Wg}
Let $g\in C^1(\RR^{+*}) $ be a strictly increasing function such that $\lim\limits_{s\rightarrow 0}g(s)=-\infty$ and $\lim\limits_{s\rightarrow +\infty}g(s)<\infty$.
We define the generalized entropy functional $W_g$ for system \eqref{eq:Krause} by: 
$$
W_g(t) = \frac{1}{2N^2} \sum\limits_{i=1}^N\sum\limits_{j=i+1}^N g(\|x_i(t)-x_j(t)\|^2).
$$
\end{definition}
The advantage of defining such an entropy functional is that we are able to characterize completely the dispersion of the system. 

\begin{theorem} \label{th:Wg}
Let $W_g$ be an entropy functional as defined in Definition \ref{def:Wg}. The following two statements are equivalent: 
\begin{enumerate}
\item There exists $\eta>0$ such that for all $t>0$, $W_g(t)>\eta$.
\item There exists $\varepsilon>0$ such that for all $t>0$, for all $(i,j)\in\{1,...,N\}^2$,  $\|x_i(t)-x_j(t)\|>\varepsilon$.
\end{enumerate}
If the conditions above are satisfied, the system is declustered at all time. 
\end{theorem}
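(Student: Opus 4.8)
The plan is to prove both implications of the equivalence, the whole argument hinging on a single structural feature of $g$ that was absent for the logarithm in Lemma \ref{lemma:W}: since $g$ is strictly increasing with $\lim_{s\to+\infty} g(s) = L < \infty$, we have $g(s) < L$ for every $s > 0$, so $g$ is \emph{bounded above}. This is precisely what prevents large pairwise distances from compensating a vanishing one, and it is the reason the equivalence holds for $W_g$ but fails for $W$.

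I would first dispose of the easy implication $(2) \Rightarrow (1)$. Assuming $\|x_i(t) - x_j(t)\| > \varepsilon$ for all $t$ and all $i \neq j$, monotonicity of $g$ gives $g(\|x_i(t)-x_j(t)\|^2) > g(\varepsilon^2)$ for each pair; summing over the $N(N-1)/2$ pairs yields
\[
W_g(t) > \frac{N(N-1)}{4N^2}\, g(\varepsilon^2) =: \eta,
\]
a lower bound independent of $t$. If one insists on $\eta > 0$, it suffices to note that replacing $g$ by $g + c$ leaves the hypotheses of Definition \ref{def:Wg} intact and shifts $W_g$ by the additive constant $\frac{N(N-1)}{4N^2} c$, so the precise threshold value is immaterial and the content of statement $(1)$ is really boundedness below.

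The substance of the theorem is the converse $(1) \Rightarrow (2)$, and this is where I would spend the effort. Assume $W_g(t) > \eta$ for all $t$. First I would record the companion upper bound coming from boundedness above: since each of the $N(N-1)/2$ summands is $< L$, we have $W_g(t) < \frac{N(N-1)}{4N^2} L$, hence necessarily $\eta < \frac{N(N-1)}{4N^2} L$. Now fix an arbitrary time $t$ and an arbitrary pair $(i_0,j_0)$, and isolate its contribution:
\[
g\!\left(\|x_{i_0}(t)-x_{j_0}(t)\|^2\right) = 2N^2 W_g(t) - \sum_{\substack{i<j\\ (i,j)\neq(i_0,j_0)}} g\!\left(\|x_i(t)-x_j(t)\|^2\right) > 2N^2\eta - \left(\frac{N(N-1)}{2}-1\right) L,
\]
where I bounded every remaining summand above by $L$. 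Call the right-hand side $C$; it is a finite constant depending only on $N$, $\eta$ and $L$, and the inequality $\eta < \frac{N(N-1)}{4N^2}L$ guarantees $C < L$, so $C$ lies in the range of $g$.

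To finish, I would invert $g$. The hypotheses make $g$ a continuous strictly increasing bijection from $(0,+\infty)$ onto $(-\infty, L)$, so $g^{-1}$ exists and $g(s) > C$ is equivalent to $s > g^{-1}(C) > 0$. Applying this to the displayed inequality gives $\|x_{i_0}(t)-x_{j_0}(t)\|^2 > g^{-1}(C)$, that is, $\|x_{i_0}(t)-x_{j_0}(t)\| > \sqrt{g^{-1}(C)} =: \varepsilon$, with $\varepsilon > 0$ independent of both the time $t$ and the chosen pair; this is exactly statement $(2)$. The final assertion is then immediate, since $(2)$ says that at every time $t$ all pairwise distances exceed the fixed $\varepsilon$, which is the definition of the system being fully declustered. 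The main obstacle is conceptual rather than computational: one must see that the upper bound $g < L$ is what converts a mere lower bound on the \emph{sum} $W_g$ into a lower bound on each \emph{individual} term, and hence into a uniform separation of all agents.
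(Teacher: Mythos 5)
Your proof is correct and follows essentially the same route as the paper: the easy direction by monotonicity of $g$, and the converse by isolating one pair's contribution from $2N^2W_g$, bounding the remaining summands by $m=\sup g$ (your $L$), and inverting $g$. Your explicit check that the argument of $g^{-1}$ lies below $L$, and your remark that the threshold $\eta>0$ is immaterial up to an additive shift of $g$, are minor points the paper leaves implicit, but the substance is identical.
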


\begin{proof}
Let $c\in\RR$. Suppose that for all $(i,j)\in\{1,\ldots,N\}^2$, $\|x_i-x_j\|\geq c.$
Then $W_g = \frac{1}{2N^2}\sum\limits_{i=1}^N\sum\limits_{j> i} g(\|x_i-x_j\|^2) \geq \frac{1}{2N^2}\frac{N(N-1)}{2}g(c^2)$.

Conversely, let $K\in\RR$. Suppose that $W_g\geq K$. Let $m:=\sup\{g(s),\;s>0\}$.
For all $(i,j)\in\{1,\ldots,N\}^2$, $g(\|x_i-x_j\|^2)\leq m$. Let $(k,l)\in\{1,\ldots,N\}^2$ with $k<l$. 
Notice that the assumptions on $g$ given in Definition \ref{def:Wg} imply that $g$ is invertible. Let $g^{-1}:(-\infty,m)\rightarrow (0,+\infty)$ denote the inverse of $g$. 
Since $W_g = \frac{1}{2N^2} \sum\limits_{i=1}^N\sum\limits_{j=i+1}^N g(\|x_i-x_j\|^2)$, we write: 
$$
\|x_k-x_l\|^2 = g^{-1}\left(2N^2 W_g- \sum\limits_{i\neq k } \sum\limits_{j> i, j\neq l } g(\|x_i-x_j\|^2)\right). 
$$
Since $g^{-1}$ is an increasing function, we obtain: 
$$
\|x_k-x_l\|\geq \sqrt{g^{-1}\left(2N^2 K-(\frac{N(N-1)}{2}-1)m\right)}
$$
and the result follows.
\end{proof}

From Theorem \ref{th:Wg}, maximizing $W_g$ will ensure that the system is declustered, hence that it is far from the clustering set.
We design a control strategy to keep the system in a declustered state, by maximizing $\dot{W}_g$ instantaneously.

\begin{prop}\label{prop:contWg}
Let $M>0$ and let $u\in\UM$.
For all $i\in\{1,\ldots,N\}$, let $S_i:=\frac{1}{N}\sum_{j\neq i} g'(\Vert x_i-x_j\Vert^2) (x_i-x_j)$.
Let $i_W:= \argmax\limits_{i\in\{1,\ldots,N\}}\|S_i\|$.
The control $u^W$ defined by 
\begin{equation}\label{eq:contW}
u^W_i =  \begin{cases}
M\frac{S_{i}}{\|S_{i}\|} \; \text{ for } i=i_W \\
0 \; \text{ for all } i\neq i_W
\end{cases}
\end{equation}
maximizes the time derivative of the generalized entropy $\dot W_g$ instantaneously.
\end{prop}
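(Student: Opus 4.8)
The plan is to compute $\dot W_g$ along a trajectory of the controlled system \eqref{eq:gensyst}, isolate the dependence on the control $u$, and then recognize the resulting expression as a linear functional of $u$ subject to the $\ell_1^N$-$\ell_2^d$ constraint $\sum_i\|u_i\|\leq M$. First I would differentiate
\[
W_g = \frac{1}{2N^2}\sum_{i=1}^N\sum_{j=i+1}^N g(\|x_i-x_j\|^2)
\]
using the chain rule, noting that $\frac{d}{dt}\|x_i-x_j\|^2 = 2\langle x_i-x_j,\dot x_i-\dot x_j\rangle$. This gives a double sum with factors $g'(\|x_i-x_j\|^2)\langle x_i-x_j,\dot x_i-\dot x_j\rangle$.

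Next I would symmetrize the double sum exactly as in the proof of Proposition \ref{prop:contV}: exploiting the antisymmetry of $x_i-x_j$ and splitting the inner product, the two halves of the sum coincide, so the factor of $\tfrac12$ disappears and one is left with a single outer sum over $i$. The goal is to rewrite
\[
\dot W_g = \frac{1}{N}\sum_{i=1}^N \Big\langle \tfrac{1}{N}\sum_{j\neq i} g'(\|x_i-x_j\|^2)(x_i-x_j),\; \dot x_i\Big\rangle
= \frac{1}{N}\sum_{i=1}^N \langle S_i,\, f_i(x)+u_i\rangle,
\]
where $S_i$ is precisely the vector defined in the statement. The term $\sum_i\langle S_i, f_i(x)\rangle$ is independent of $u$, so maximizing $\dot W_g$ instantaneously reduces to maximizing $\sum_i\langle S_i, u_i\rangle$ over the admissible control set $\UM$.

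Finally I would solve this linear optimization. By Cauchy--Schwarz, $\langle S_i,u_i\rangle\leq\|S_i\|\,\|u_i\|$, so $\sum_i\langle S_i,u_i\rangle\leq\sum_i\|S_i\|\,\|u_i\|\leq\big(\max_i\|S_i\|\big)\sum_i\|u_i\|\leq M\max_i\|S_i\|$. The first inequality is tight when each active $u_i$ is aligned with $S_i$, and the budget is best spent entirely on the index $i_W$ maximizing $\|S_i\|$; putting all mass on that index with direction $S_{i_W}/\|S_{i_W}\|$ and amplitude $M$ attains the bound, which is exactly the control $u^W$ of \eqref{eq:contW}. This establishes that $u^W$ maximizes $\dot W_g$ instantaneously.

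The main obstacle I anticipate is purely bookkeeping rather than conceptual: carrying out the symmetrization cleanly so that the scalar factor $g'(\|x_i-x_j\|^2)$ (which is symmetric in $i,j$) pairs correctly with the antisymmetric vector $x_i-x_j$, ensuring the cross terms combine into the claimed $S_i$ without a stray sign or factor. One should also note, to justify differentiating $W_g$, that the trajectory must remain outside $\Scl$ (since $g$ and $g'$ blow up on the clustering set), so the computation is valid exactly where $S_i$ is well defined — a caveat worth stating but not an essential difficulty for the instantaneous maximization argument.
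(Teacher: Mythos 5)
Your proposal is correct and follows essentially the same route as the paper's proof: differentiate $W_g$, symmetrize the double sum using the antisymmetry of $x_i-x_j$ (and symmetry of $g'(\|x_i-x_j\|^2)$) to obtain $\dot W_g = \frac{1}{N}\sum_i \langle S_i, f_i(x)+u_i\rangle$, and then maximize the linear term in $u$ over $\UM$. You are in fact slightly more complete than the paper, which leaves the final Cauchy--Schwarz/budget-concentration step implicit, and your caveat about differentiating only away from $\Scl$ is a sensible addition.
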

\begin{proof}
Let us start by computing $\dot W_g(t)$.
Since $\dot x_i-\dot x_j = f_i(v)- f_j(v)+u_i- u_j$, we get:
\begin{equation}\label{eq:Wgdot}
\begin{split}
\dot W_g = & \frac{1}{N^2}\sum_{1\leq i< j\leq N} g'(\Vert x_i-x_j\Vert^2) \langle x_i-x_j,f_i(x)- f_j(x)+u_i- u_j\rangle \\
= & \frac{1}{N^2}\sum_{1\leq i< j\leq N} g'(\Vert x_i-x_j\Vert^2) \langle x_i-x_j,f_i(x) +u_i\rangle - \frac{1}{N^2}\sum_{1\leq i< j\leq N} g'(\Vert x_i-x_j\Vert^2) \langle x_i-x_j,f_j(x) +u_j\rangle \\
= & \frac{1}{N^2}\sum_{ i\neq j} g'(\Vert x_i-x_j\Vert^2) \langle x_i-x_j,f_i(x)+u_i\rangle 
=  \frac{1}{N}\sum_{i=1}^N \langle \frac{1}{N}\sum_{j\neq i} g'(\Vert x_i-x_j\Vert^2) (x_i-x_j),f_i(x)+u_i\rangle.
\end{split} 
\end{equation}
Let $S_i:=\frac{1}{N}\sum_{j\neq i} g'(\Vert x_i-x_j\Vert^2) (x_i-x_j)$. Let $i_W:=\arg\max_{i}\|S_i\|$, representing a weighted mean of influences of all agents on agent $i$.
Then the control strategy \eqref{eq:contW} maximizing $\dot{W}_g$ at all time $t$ is sparse. 
\end{proof}

The control strategies designed in Propositions \ref{prop:contV} and \ref{prop:contWg} are both sparse, meaning that the control acts on only one agent at a given time. However, they differ in fundamental ways. In order to maximize the variance $V$, one must act on the agent furthest away from the center of mass of the group, as shown in Proposition \ref{prop:contV}. On the other hand, to maximize the general entropy $W_g$, one must act on an agent which is both close to other agents, and at the edge of the group (as illustrated in Figure \ref{fig:distribRS}).
 
\begin{figure}
\includegraphics[width = 0.45\textwidth]{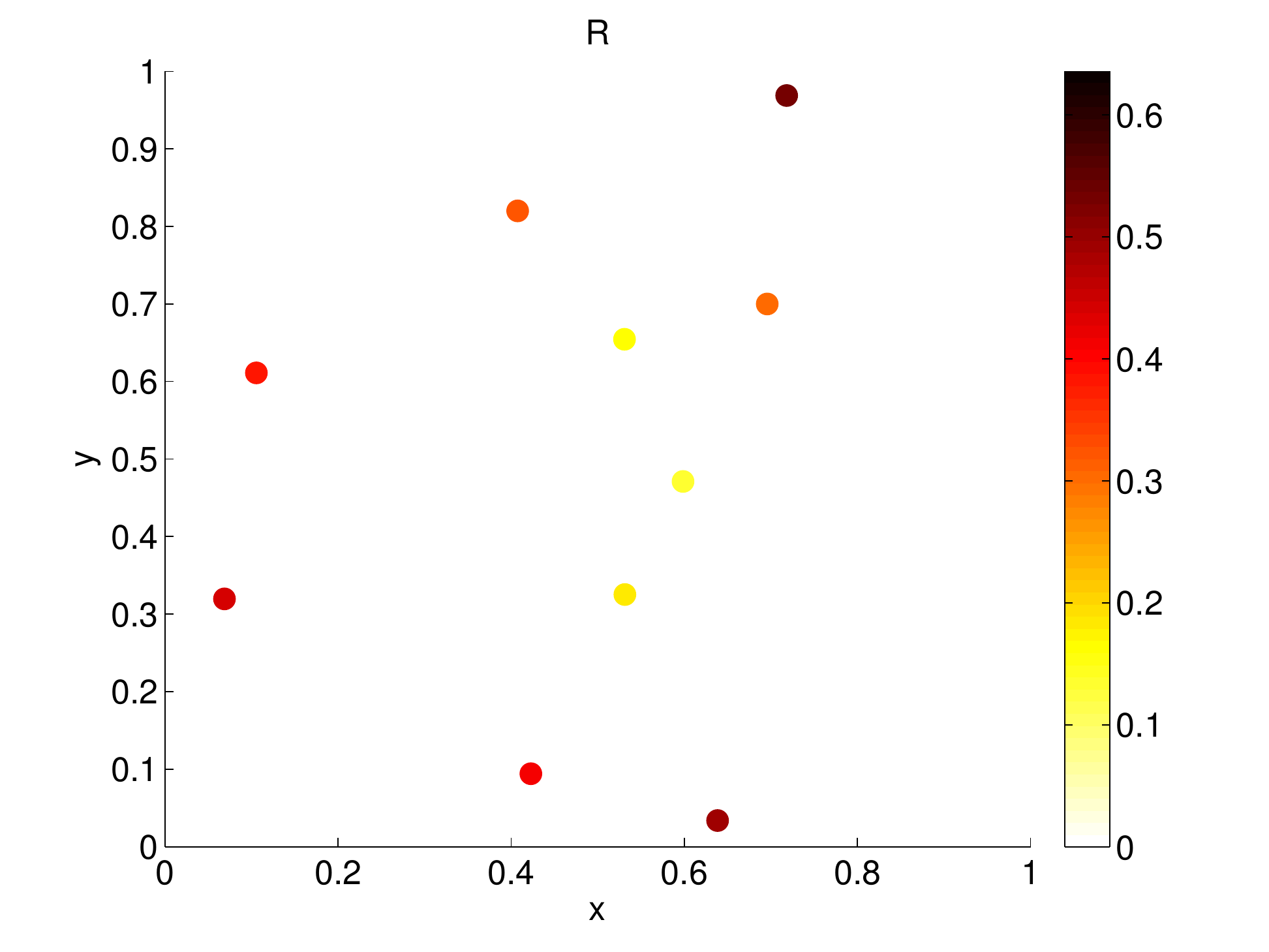}
\includegraphics[width = 0.45\textwidth]{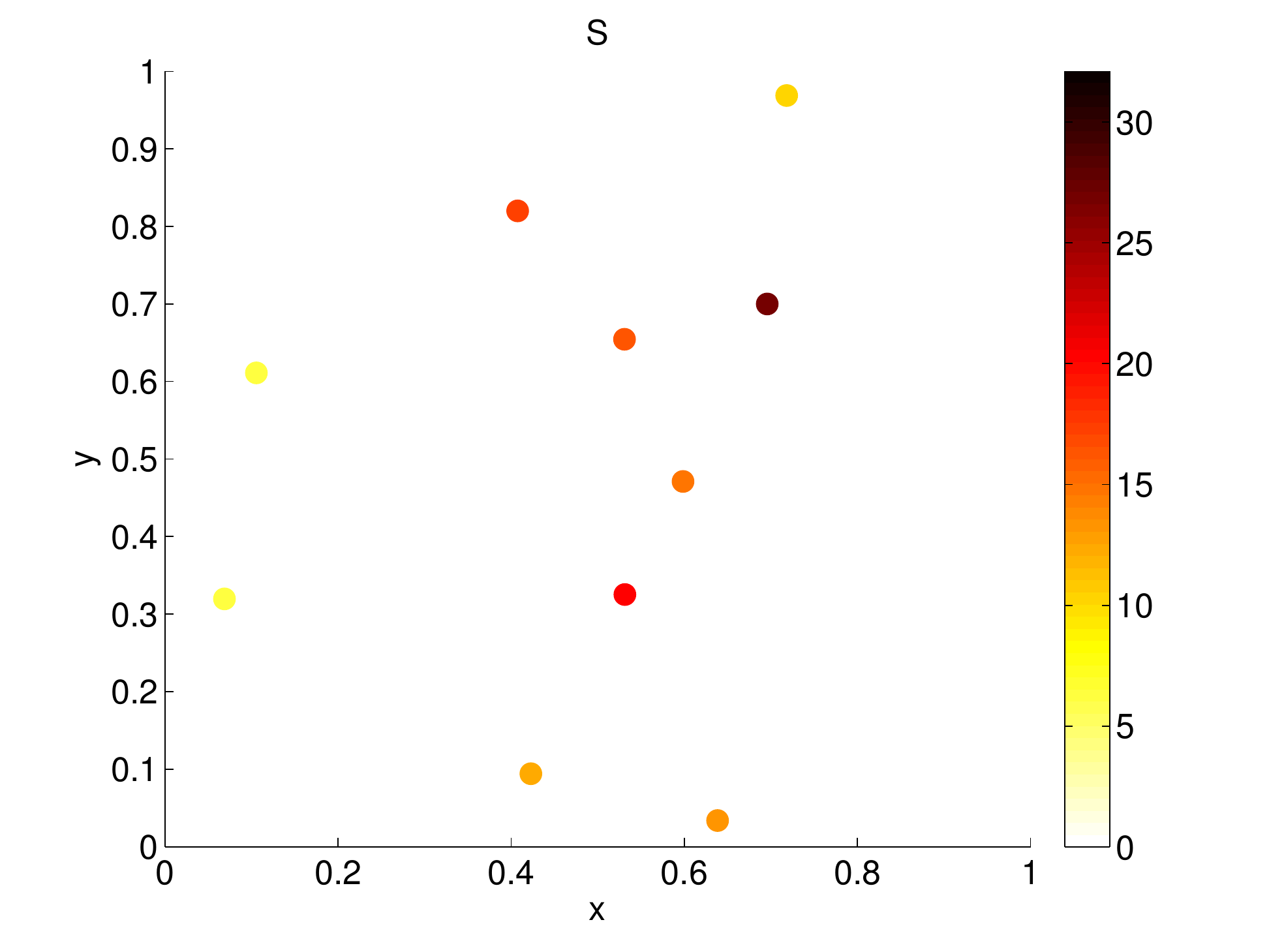}
\caption{Random distribution of $N=10$ agents in $\RR^2$. The control strategies to minimize $\dot{V}$ and $\dot{W}_g$ respectively consist of acting on the agent with the largest $R$ (left) and $S$ (right), computed with $g:s\rightarrow-\frac{1}{s}$.}\label{fig:distribRS}
\end{figure}

In particular these sparse control strategies apply to the Krause system \eqref{eq:Krause1} where $f(x)=\frac{1}{N}\sum_{i\neq j} a(\|x_i-x_j\|)(x_j-x_i)$.

\subsection{Controlling the system away from consensus and clustering}

We now choose to focus our study on the well-known Hegselmann-Krause first order consensus model: 
\begin{equation}\label{eq:Krause1}
\dot{x}_i = \frac{1}{N}\sum_{j\neq i} a(\|x_i-x_j\|)(x_j-x_i), \quad i\in\elts.
\end{equation}
The Hegselmann-Krause model \eqref{eq:Krause1} was designed in the context of opinion dynamics and captures collective behavior such as consensus or clustering \cite{HK02}. In the original ``bounded confidence'' model, each agent aligns its position to an average of all neighbors within a predetermined range.
Here, we generalize this idea by considering that each agent $x_i$ aligns its position to a weighted average of all other agents' positions, depending on the interaction function  
$a:\RR^+\rightarrow \RR^+$. The HK model can be recovered in the special case of $a$ being a step function $s\mapsto a(s) = \one_{s\leq r}$.

Let $M>0$. We define the controlled evolution of the system as follows:
\begin{equation}\label{eq:Krause}
\dot{x}_i = \frac{1}{N}\sum_{j\neq i} a(\|x_i-x_j\|)(x_j-x_i) + u_i, \quad i\in\elts, 
\end{equation}
where $u\in \UM$ (see \eqref{eq:UM}).

Let $\phi_{a,u}:(\RR^d)^n\times\RR\rightarrow (\RR^d)^n$ be the flow associated with the differential equation \eqref{eq:Krause}, \textit{i.e.}, for all $x_0\in (\RR^d)^n$ for all $t\in\RR^+$, $\phi_{a,u}(x_0,t)$ is the unique solution of \eqref{eq:Krause} with initial condition $x(0) = x_0.$

The problem of defining the solution of \eqref{eq:Krause1} when agents collide was treated in \cite{CDFLS11} (Remark 2.10).  
In what follows, we allow the interaction function $a(\cdot)$ to be unbounded near zero. Less restrictive even, we allow the following: $\lim_{s\rightarrow 0} sa(s) = +\infty$. This causes the right-hand side of \eqref{eq:Krause1} to be undefined when two agents cluster. However, we can define the solution up to the time of the first clustering $\bar{t}$. We prove that the limit of the solution of \eqref{eq:Krause1} when approaching $\bar{t}$ is unique. This will allow us to extend the solution in order to give a meaning to the system after a time of clustering.

\begin{lemma}\label{lemma:prolong}
Let $x$ denote the solution of system \eqref{eq:Krause1}, and let $\bar{t}$ be the first time at which a cluster occurs, \textit{i.e.}, for all $t<\bar{t}$, for all $(i,j)\in\elts^2$, $x_i(t)\neq x_j(t)$ 	and there exist $(k,l)\in\elts^2$ such that
$$\lim\limits_{t\rightarrow \bar{t}} \|x_k(t)-x_l(t)\| = 0.$$
There exists $x^L\in(\RR^d)^N$ such that 
$$ \lim\limits_{t\rightarrow \bar{t}} x(t) = x^L.$$
\end{lemma}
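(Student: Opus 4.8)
The plan is to extract from the dynamics a conserved quantity and a monotone quantity that confine the trajectory and quantify how fast agents can approach, and then to show that each coordinate $x_i(t)$ admits a limit as $t\to\bar t$ via a cluster decomposition. First I would record the structural facts. Summing \eqref{eq:Krause1} over $i$ and using the antisymmetry of $a(\|x_i-x_j\|)(x_j-x_i)$ shows that the mean $\bar{x}=\frac1N\sum_j x_j$ is conserved. Taking $u=0$ in the computation of Proposition \ref{prop:contV} gives, along the flow, $\dot V = -\frac1{N^2}\sum_{i<j}a(\|x_i-x_j\|)\|x_i-x_j\|^2\le 0$, so the variance is nonincreasing; since $V(x)=\frac1{2N}\sum_i\|x_i-\bar{x}\|^2$, the trajectory stays in a fixed compact set on $[0,\bar t)$, and integrating $\dot V$ yields the \emph{energy estimate} $\int_0^{\bar t}\sum_{i<j}a(\|x_i-x_j\|)\|x_i-x_j\|^2\,dt\le N^2V(x(0))<\infty$.

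The heart of the argument is a center-of-mass trick on clusters. Call two agents \emph{bonded} if $\|x_i(t)-x_j(t)\|\to 0$ as $t\to\bar t$; by the triangle inequality this is an equivalence relation, partitioning $\elts$ into clusters. For a cluster $C$, set $y_C=\frac1{|C|}\sum_{i\in C}x_i$. In $\dot y_C$ the interactions between agents of $C$ cancel by antisymmetry, so $\dot y_C$ involves only terms $a(\|x_i-x_j\|)(x_j-x_i)$ with $j\notin C$, whose arguments stay bounded below near $\bar t$; since $a$ is continuous on $(0,+\infty)$ and distances are bounded above, $\dot y_C$ is bounded, hence $y_C(t)$ converges. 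As all mutual distances inside $C$ tend to $0$, each $x_i$ with $i\in C$ satisfies $\|x_i-y_C\|\le\frac1{|C|}\sum_{j\in C}\|x_i-x_j\|\to 0$, so $x_i(t)$ converges to the same limit as $y_C$. Agents bonded to no one form singleton clusters treated identically (their velocity stays bounded, all partners being at distances bounded below). Collecting these limits produces $x^L$.

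The main obstacle is to \emph{justify the cluster decomposition itself}, i.e. to show that $\lim_{t\to\bar t}\|x_i(t)-x_j(t)\|$ exists for every pair, so that ``bonded'' is well defined. I would do this through the differential inequality $\frac{d}{dt}\|x_i-x_j\|\le -\frac2N \|x_i-x_j\|\,a(\|x_i-x_j\|)+\|g_{ij}\|$, where $g_{ij}=\frac1N\sum_{k\neq i,j}\big(a(\|x_i-x_k\|)(x_k-x_i)-a(\|x_j-x_k\|)(x_k-x_j)\big)$ gathers the influence of the remaining agents. The hypothesis $\lim_{s\to 0}s\,a(s)=+\infty$ makes the restoring term dominate once $\|x_i-x_j\|$ is small: if $\|g_{ij}\|\le G$ near $\bar t$, there is a threshold $s_*$ (where $\frac2N s\,a(s)>G$ for $s<s_*$) below which $\|x_i-x_j\|$ is strictly decreasing, hence monotone, hence convergent, so $\liminf\|x_i-x_j\|=0$ forces the limit to be $0$ (a trapping effect). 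The delicate point is that $\|g_{ij}\|$ is bounded only once no \emph{other} pair collides simultaneously; I expect to resolve this by treating the pair realizing the minimal distance first and proceeding by induction on the number of clusters (peeling off each resolved cluster and applying the same estimate to the reduced system), which is where the real work lies.
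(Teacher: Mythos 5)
Your barycenter construction is a genuinely different route from the paper's, and in one respect a stronger one. The paper argues by compactness: the trajectory is bounded, so subsequential limits exist; a non-clustering agent has all its distances bounded below, hence bounded velocity (even when $a$ blows up at $0$), hence a unique limit; and the limit of the clustering pair $(k,l)$ is then pinned down by conservation of the mean $\bar{x}$. Your cluster barycenters $y_C$ play the role of the conserved mean locally --- internal interactions cancel by antisymmetry, so no bound on $a$ along colliding pairs is ever needed --- and this handles several simultaneous clusters and clusters of more than two agents, which the paper's written argument (a single pair $(k,l)$, global mean conservation) does not explicitly cover. Note also that the separation property you labor over is exactly what the paper silently assumes: its proof simply asserts ``since $x_i$ is not part of a cluster, there exists $c>0$ such that $\|x_i-x_j\|\geq c$'', i.e.\ the dichotomy between $\lim=0$ and $\liminf>0$ for each pairwise distance.

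As a standalone proof, however, your proposal has a genuine gap, and it is the one you flag yourself. First, the trapping argument needs a threshold $s_*$ with $\frac{2}{N}s\,a(s)>G$ for all $s<s_*$, i.e.\ it uses $\lim_{s\to 0}s\,a(s)=+\infty$ --- but the lemma does not assume this; the surrounding text merely \emph{allows} such $a$. The statement also covers, e.g., interaction functions with $s\,a(s)$ bounded near $0$ (there the right-hand side of the ODE is bounded and the lemma is immediate by Lipschitz continuity, a case your argument as written does not dispatch), as well as $a$ with $s\,a(s)$ oscillating between bounded and unbounded values near $0$, where neither trapping nor the Lipschitz shortcut applies. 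Second, even granting $s\,a(s)\to+\infty$, the bound $G$ on $\|g_{ij}\|$ presupposes that every third agent keeps its distances to $i$ and $j$ bounded below --- which is precisely the statement being proved --- and the proposed induction does not obviously close: ``peeling off'' a resolved cluster does not remove its (possibly unbounded) interaction terms from the remaining agents' equations, and when several pairwise distances approach $0$ at comparable, interleaved rates there need be no well-defined minimal pair to treat first. What you have actually proved is the conditional statement: \emph{if} every pairwise distance either tends to $0$ or has positive $\liminf$ as $t\to\bar{t}$, then $x(t)$ converges. The unconditional dichotomy --- the ``real work'' you defer --- is missing; the proposal is therefore incomplete, though in fairness the paper's own proof assumes this dichotomy rather than proving it.
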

\begin{proof}
Let $\bar{t}$ be the first time at which a cluster occurs, and let $x_k$ and $x_l$ be the clustering agents, \textit{i.e.}, for all $t<\bar{t}$, for all $(i,j)\in\elts^2$, $x_i(t)\neq x_j(t)$ 	and 
$\lim_{t\rightarrow \bar{t}} \|x_k(t)-x_l(t)\| = 0.$
Since $a:\RR^+\rightarrow \RR^+$, the system \eqref{eq:Krause1} is contractive and $x$ stays bounded. Hence there exists a sequence $(t_n)$ converging to $\bar{t}$ such that $\lim\limits_{n\rightarrow+\infty} x(t_n) = x^L$.
Suppose that there exist two non-decreasing subsequences $(t_{2n})$ and $(t_{2n+1})$ converging to $\bar{t}$ such that $\lim\limits_{n\rightarrow+\infty} x(t_{2n}) = \tilde x^L$ and $\lim\limits_{n\rightarrow+\infty} x(t_{2n+1}) = \hat x^L$.  
Suppose that for $i\in\elts \setminus \{k,l\}$, $\tilde x^L_i\neq \hat x^L_i$. Then there exists $\epsilon>0$ such that for all $n\in\N$ big enough
\begin{equation}\label{eq:prooflimit}
 \| x_i(t_{2n})-x_i(t_{2n+1}) \|\geq \epsilon.
\end{equation}
Since $x_i$ is not part of a cluster, there exists $\eta>0$ such that for all $j\in\elts$, $\|x_i-x_j\|\geq c$, which implies that there exists $A>0$ such that for all $j\in\elts$, $\|x_i-x_j\|\leq A$. Since $x$ is bounded, 
there also exists $C>0$ such that for all $j\in\elts$, $\|x_i-x_j\|\leq C$. Hence from \eqref{eq:Krause1}, we get:
$\|\dot x_i \| \leq \frac{1}{N} \sum_{j\neq i} AC \leq AC.$
Then $$ \| x_i(t_{2n})-x_i(t_{2n+1}) \| \leq AC | t_{2n}-t_{2n+1} | \underset{n\rightarrow+\infty}\longrightarrow 0 $$ which contradicts \eqref{eq:prooflimit}. Therefore, for all $i\in\elts \setminus \{k,l\}$, $\tilde x^L_i = \hat x^L_i$.

Now suppose that $\tilde x_k^L\neq \hat x_k^L$. Since $\tilde x_k^L = \tilde x_l^L$ and $\hat x_k^L = \hat x_l^L$, it automatically holds: $\tilde x_l^L\neq \hat x_l^L$.
Now notice that one characteristic of System \eqref{eq:Krause1} is that the mean $\bar{x}$ stays constant in time.
We should then have $\bar{\tilde x}^L=\bar{\hat  x}^L$. However, we have 
$$
\bar{\tilde x}^L = \frac{1}{N} \sum_{i=1}^N \tilde x_i^L = \frac{1}{N} \sum_{i\neq k,l} \tilde x_i^L + \frac{2}{N}\tilde x_k^L = \frac{1}{N} \sum_{i\neq k,l} \bar x_i^L + \frac{2}{N} \tilde x_k^L \neq \bar{\hat x}^L.
$$
This proves that there exists a unique limit $x^L = \tilde{x}^L = \bar{x}^L = \lim\limits_{t\rightarrow \bar{t}} x(t) $.
\end{proof}

Since \eqref{eq:Krause1} may be undefined when two or more agents cluster, we impose $a(0) = 0$.
Lemma \ref{lemma:prolong} implies that the solution can be extended after each time of clustering. The condition $a(0) = 0$ implies that once two agents collide, they stay clustered (for the system without control). 

\subsubsection{Black Hole}\label{Sec:BlackHole}

In Section \ref{Sec:Entropy}, we designed a control strategy in the general case of system \eqref{eq:gensyst}. We now study the more specific first-order consensus model \eqref{eq:Krause}.
In this section, we prove that for certain interaction functions $a(\cdot)$, there exists a\textit{ black hole}, \textit{i.e.}, given a certain bound $M$ on the control (with $\sum_{i=1}^N\|u_i\|\leq M$), for certain initial conditions, it is impossible to avoid convergence to consensus whatever the control may be. This is a manifestation of the Black Swan phenomenon. 

\begin{definition}
Let $M>0$. 
We define the \textit{black hole region} as follows:
$$
\mathcal{R}^M_\mathrm{BH} = \{x_0\in(\RR^d)^n\; | \; \forall u\in\UM, \; \exists T>0, \; V(\phi_{a,u}(x_0,T))=0 \}.
$$
\end{definition}

\begin{theorem}\label{th:BH}
Let $a$ be an attraction potential such that 
$\lim_{s\rightarrow 0} sa(s) = + \infty  $. Then for all $M>0$, there exists $\epsilon>0$ such that if for all $(i,j)\in\elts^2$, $\|x_i(0)-x_j(0)\|<\epsilon$, then given any control $u\in\UM$, the system converges to consensus in finite time.  In other words, for any $M>0$, there exists $\mathcal{R}^M_\mathrm{BH}$ such that $\mathcal{M}_\mathrm{c} \varsubsetneq \mathcal{R}^M_\mathrm{BH}$.
\end{theorem}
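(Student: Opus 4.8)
The plan is to track the variance $V$ along trajectories and to show that, for initial data confined to a small enough ball around the consensus manifold, its time derivative is so negative that $V$ is forced to $0$ in finite time no matter how the control is chosen. Starting from the identity established in the proof of Proposition~\ref{prop:contV}, namely $\dot V=\frac1N\sum_i\langle x_i-\bar x,f_i(x)+u_i\rangle$, I would split $\dot V$ into a drift part and a control part. Using $\sum_i f_i(x)=0$ and symmetrizing over pairs, the drift part computes explicitly to
\[
\frac1N\sum_i\langle x_i-\bar x,f_i(x)\rangle=-\frac{1}{N^2}\sum_{i<j}a(\|x_i-x_j\|)\,\|x_i-x_j\|^2 ,
\]
while the control part is bounded by Cauchy--Schwarz together with the identity $\sum_i\|x_i-\bar x\|^2=2NV$: since $\sum_i\|u_i\|\le M$, one gets $\big|\frac1N\sum_i\langle x_i-\bar x,u_i\rangle\big|\le\frac MN\max_i\|x_i-\bar x\|\le\frac{M\sqrt2}{\sqrt N}\sqrt V$.

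The heart of the argument is to exploit $\lim_{s\to0}sa(s)=+\infty$. Given $M$, I would first fix a constant $L$ so large that $c:=\sqrt2\big(\frac LN-\frac{M}{\sqrt N}\big)>M$, and then invoke the hypothesis to pick $\epsilon>0$ with $a(s)>L/s$ for all $0<s<\epsilon$. As long as every pairwise distance $s_{ij}:=\|x_i-x_j\|$ stays below $\epsilon$, we have $a(s_{ij})s_{ij}^2>Ls_{ij}$, so the drift part is at most $-\frac{L}{N^2}\sum_{i<j}s_{ij}$; since $\sum_{i<j}s_{ij}\ge(\sum_{i<j}s_{ij}^2)^{1/2}=N\sqrt{2V}$, this yields $\dot V\le-c\sqrt V$. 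The differential inequality $\dot V\le-c\sqrt V$ drives $V$ to $0$ no later than time $2\sqrt{V(0)}/c$.

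The remaining --- and most delicate --- point is the circularity: the estimate $a(s)>L/s$ holds only while all distances remain under $\epsilon$, yet the control could in principle spread the agents apart before the collapse completes. To close this I would control the diameter $D(t):=\max_{i,j}\|x_i-x_j\|$ directly. For the pair $(k,l)$ realizing $D$, the drift term $\langle x_k-x_l,f_k-f_l\rangle$ is $\le0$, because $\langle x_k-x_l,x_j-x_k\rangle=\langle x_k-x_l,x_j-x_l\rangle-\|x_k-x_l\|^2\le0$ for every $j$ (the standard contractivity of Hegselmann--Krause averaging on an extremal pair), and symmetrically for $f_l$; hence only the control can enlarge $D$, at rate at most $\|u_k\|+\|u_l\|\le M$, giving $D(t)\le D(0)+Mt$ in the sense of upper Dini derivatives. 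Taking the initial threshold of the statement to be $\epsilon/2$, i.e.\ requiring $D(0)<\epsilon/2$, forces $V(0)<\epsilon^2/16$ and a collapse time $T\le2\sqrt{V(0)}/c<\epsilon/(2c)<\epsilon/(2M)$; on $[0,T]$ one has $D(t)<\epsilon/2+Mt<\epsilon$, so the hypothesis $D<\epsilon$ is self-consistent by a bootstrap (continuation) argument and $V$ indeed reaches $0$ before the diameter can escape. This exhibits the neighborhood $\{x_0:D(x_0)<\epsilon/2\}\subset\mathcal R^M_{\mathrm{BH}}$, which strictly contains $\Mc$.

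Finally, since $a$ may blow up near $0$, solutions must be understood through the extension provided by Lemma~\ref{lemma:prolong}; the inequality $\dot V\le-c\sqrt V$ survives partial clustering events because any already-collapsed pair contributes $a(0)\cdot0=0$ to both sides, so continuity of $V$ along the extended flow is all that is needed to conclude. The main obstacle is precisely this coupling between the variance decay and the diameter bound --- ensuring the attraction estimate remains valid long enough --- while the two estimates themselves are direct computations.
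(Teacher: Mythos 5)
Your proposal is correct, and its engine is identical to the paper's: differentiate the variance, isolate the drift $-\frac{1}{N^2}\sum_{i<j}a(\|x_i-x_j\|)\|x_i-x_j\|^2$, use $\lim_{s\to 0}sa(s)=+\infty$ to make it dominate the control term, and apply the norm equivalence $\sum_{i<j}\|x_i-x_j\|\geq N\sqrt{2V}$ to obtain $\dot V\leq -c\sqrt V$ and extinction of $V$ in finite time. Where you genuinely diverge is in closing the bootstrap. The paper does it with a single observation: since $\max_{i,j}\|x_i-x_j\|^2\leq 2N^2V$, once $V(0)\leq \epsilon^2/(2N^2)$ and $V$ is decreasing, the condition that all pairwise distances stay below $\epsilon$ is self-sustaining, with no separate estimate needed. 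You instead prove an unconditional diameter bound $D(t)\leq D(0)+Mt$ via contractivity of the drift on a diametral pair (your inner-product inequality $\langle x_k-x_l,x_j-x_k\rangle\leq 0$ is indeed valid at an extremal pair since $a\geq 0$, and the Dini-derivative framing handles the nonsmooth maximum), then compare the collapse time $2\sqrt{V(0)}/c$ with the escape time $\epsilon/(2M)$ — the sole reason you need $c>M$, a requirement absent from the paper. Your route is heavier, but it buys something real: your smallness hypothesis $D(0)<\epsilon/2$ is phrased directly on initial pairwise distances, matching the theorem's statement literally, whereas the paper's proof as written starts from $V(0)=\epsilon^2/(2N^2)$, a strictly smaller neighborhood than $\{\,\|x_i(0)-x_j(0)\|<\epsilon \text{ for all } i,j\,\}$, so reconciling proof and statement there requires shrinking the statement's $\epsilon$. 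Your constants check out ($\sum_i\|x_i-\bar x\|^2=2NV$ gives the control bound $\frac{M\sqrt 2}{\sqrt N}\sqrt V$, and $D(0)<\epsilon/2$ gives $V(0)<\epsilon^2/16$), and your treatment of partial clustering — collapsed pairs contribute zero to both sides of the drift inequality, with continuation via Lemma \ref{lemma:prolong} — is consistent with how the paper itself handles blow-up of $a$ near zero.
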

\begin{proof}
 We study the evolution of the variance $V(t)=\frac{1}{2N^2}\sum\limits_{i=1}^N\sum\limits_{j=1+1}^N\|x_i(t)-x_j(t)\|^2=\frac{1}{4N^2}\sum\limits_{i=1}^N\sum\limits_{j\neq i} \|x_i(t)-x_j(t)\|^2$. 
 Along any trajectory of $x$, $\dot{V} $ satisfies:
\begin{equation*}
\begin{split}
\dot{V} 
& = \frac{1}{2N^2}\sum\limits_{i=1}^N\sum\limits_{j\neq i}\langle x_i-x_j, \frac{1}{N} \sum_{k\neq i} a(\|x_i-x_k\|)(x_k-x_i)- \frac{1}{N} \sum_{k\neq j} a(\|x_j-x_k\|)(x_k-x_j)+u_i-u_j\rangle.
\end{split}
\end{equation*}
The uncontrolled part of $\dot{V}$ writes:
\begin{equation*}
\begin{split}
 &   \frac{1}{2N^2}\sum\limits_{i=1}^N\sum\limits_{j\neq i}\langle x_i-x_j, \frac{1}{N} \sum_{k\neq i} a(\|x_i-x_k\|)(x_k-x_i) \rangle 
-  \frac{1}{2N^2}\sum\limits_{i=1}^N\sum\limits_{j\neq i} \langle x_i-x_j, \frac{1}{N} \sum_{k\neq j} a(\|x_j-x_k\|)(x_k-x_j) \rangle  \\
= & \frac{1}{2N^3}  \left( \sum\limits_{i=1}^N \sum\limits_{j\neq i}\sum_{k\neq i} \langle x_i-x_k,a(\|x_i-x_j\|)(x_j-x_i)\rangle
- \sum\limits_{j=1}^N\sum\limits_{i\neq j}\sum_{k\neq j} \langle x_k-x_j,a(\|x_i-x_j\|)(x_i-x_j)\rangle \right)\\
= & \frac{1}{2N^3} \sum\limits_{i=1}^N\sum\limits_{j\neq i} \bigg( \sum\limits_{k\neq i,j}\langle x_i-x_j,a(\|x_i-x_j\|)(x_j-x_i)\rangle \\
& \hspace{3cm} +
\langle x_i-x_j,a(\|x_i-x_j\|)(x_j-x_i)\rangle - \langle x_i-x_j,a(\|x_i-x_j\|)(x_i-x_j)\rangle \bigg)\\
= & \frac{1}{2N^2} \sum\limits_{i=1}^N\sum\limits_{j\neq i} \langle x_i-x_j,a(\|x_i-x_j\|)(x_j-x_i)\rangle 
= - \frac{1}{2N^2} \sum\limits_{i=1}^N\sum\limits_{j\neq i} a(\|x_i-x_j\|)\|x_i-x_j\|^2 \\
= & - \frac{1}{N^2} \sum\limits_{i=1}^N\sum\limits_{j= i+1}^N a(\|x_i-x_j\|)\|x_i-x_j\|^2.
\end{split}
\end{equation*}
Let $M>0$. Since $\lim_{s\rightarrow 0} sa(s) = + \infty  $, for all $A>0$, there exists $\epsilon>0$ such that for all $s<\epsilon, \; a(s)\geq \frac{A}{s}$. 
Near consensus, that is when for all $i$ and $j$, $\|x_i(t)-x_j(t)\|\leq\epsilon$ :
\begin{equation}\label{eq:VdotBH}
\begin{split}
\dot{V} 
& = -\frac{1}{N^2}\sum\limits_{i=1}^N\sum\limits_{j= i+1}^N a(\|x_i-x_j\|)\|x_i-x_j\|^2+\frac{1}{2N^2} \sum\limits_{i=1}^N\sum\limits_{j= i+1}^N \langle x_i-x_j,u_i-u_j\rangle \\
& \leq -\frac{1}{N^2}A \sum\limits_{i=1}^N\sum\limits_{j= i+1}^N \|x_i-x_j\|+\frac{1}{2N^2}2M\sum\limits_{i=1}^N\sum\limits_{j= i+1}^N\| x_i-x_j\|.
\end{split}
\end{equation}
In particular, let $A=2M$ and let $\epsilon>0$ such that for all $s<\epsilon$, $a(s)\geq \frac{A}{s}$.
Notice that $V\geq \frac{1}{2N^2}\max\limits_{i,j}\|x_i-x_j\|^2$.
Suppose that $V(0) = \frac{\epsilon^2}{2N^2}$. Then for all $(i,j)\in\{1,\ldots N\}^2$, $\|x_i(0)-x_j(0)\|\leq \sqrt{2N^2V(0)} = \epsilon$.
Then while $\|x_i-x_j\|\leq \epsilon$,
$$
\dot{V}\leq -\frac{M}{N^2}\sum\limits_{i=1}^N\sum\limits_{j= i+1}^N\| x_i-x_j\|.
$$
Recall that by equivalence of the norms, 
\begin{equation}\label{eq:normequivalence}
 \sqrt{\sum\limits_{i=1}^N\sum\limits_{j= i+1}^N\| x_i-x_j\|^2}
\leq \sum\limits_{i=1}^N\sum\limits_{j= i+1}^N \|x_i-x_j\|
\leq \sqrt{\frac{n(n-1)}{2}} \sqrt{\sum\limits_{i=1}^N\sum\limits_{j= i+1}^N\| x_i-x_j\|^2}.
\end{equation}
So 
$$
\dot{V}\leq -\frac{M}{N^2} \sqrt{\sum\limits_{i=1}^N\sum\limits_{j= i+1}^N\| x_i-x_j\|^2}  
= -\frac{M}{N^2} \sqrt{2N^2 V}
= -\frac{\sqrt{2}M}{N}\sqrt{V}.
$$
Then $V$ decreases which ensures that the condition $\|x_i-x_j\|\leq\epsilon$ holds.
Hence $V$ tends to $0$ in finite time.
\end{proof}

\begin{rem}
The condition $\lim_{s\rightarrow 0} sa(s) = + \infty$ does not generalize to the integral condition: $\int_0^{s_0}a(s)ds=+\infty$ given in \cite{CFPT13, HHK10}.
Take for instance $a(s)=\frac{1}{s}$. Then $\int_0^{s_0}a(s)ds=+\infty$, but $\lim_{s\rightarrow 0} sa(s) =1$. Indeed, going back to the proof above, the derivative of the variance satisfies: 
$$
\dot{V} = -\frac{1}{N^2} \sum\limits_{i,j} \| x_i-x_j\| + \frac{1}{2N^2} \sum\limits_{i,j} \langle x_i-x_j,u_i-u_j\rangle \leq \frac{-1+M}{N^2} \sum\limits_{i,j} \| x_i-x_j\| .
$$
If $M<1$, then convergence to consensus is unavoidable, but for bigger values of $M$ the possibility of acting on the system to prevent consensus remains.
\end{rem}
We now generalize Theorem \ref{th:BH} for functions $s\mapsto sa(s)$ that are bounded below for small values of $s$. We prove that the existence of a black hole depends on the value of the bound $M$ on the control, unlike in the case of Theorem \ref{th:BH} where a black hole exists no matter how strong the control is allowed to be. 

\begin{theorem}
Let $a$ be an attraction potential such that for $s\leq \epsilon$, $sa(s)\geq C$. Then if $M<C$, there exists a black hole.
\end{theorem}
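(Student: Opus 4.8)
The plan is to replicate the proof of Theorem \ref{th:BH} almost verbatim, the only structural difference being that the constant $A$ there was chosen freely (as $A=2M$), which was possible precisely because $\lim_{s\to 0}sa(s)=+\infty$ allowed $sa(s)$ to exceed any prescribed threshold near the consensus manifold. Here the threshold $C$ is fixed by the hypothesis, so instead of choosing the constant we are forced to compare it against $M$, and the assumption $M<C$ is exactly what makes the argument close.

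First I would write the derivative of the variance along a trajectory of \eqref{eq:Krause}, splitting it as in the proof of Theorem \ref{th:BH} into its uncontrolled part $-\frac{1}{N^2}\sum_{i<j}a(\|x_i-x_j\|)\|x_i-x_j\|^2$ and the control contribution $\frac{1}{2N^2}\sum_{i<j}\langle x_i-x_j,u_i-u_j\rangle$. In the region where all pairwise distances satisfy $\|x_i-x_j\|\leq\epsilon$, the hypothesis $sa(s)\geq C$ gives $a(\|x_i-x_j\|)\|x_i-x_j\|^2\geq C\|x_i-x_j\|$, so the dissipative term is bounded above by $-\frac{C}{N^2}\sum_{i<j}\|x_i-x_j\|$; meanwhile the $\ell^N_1$-$\ell^d_2$ constraint $\sum_i\|u_i\|\leq M$ bounds the control contribution from above by $\frac{M}{N^2}\sum_{i<j}\|x_i-x_j\|$, exactly as in \eqref{eq:VdotBH}. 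Adding these yields $\dot V\leq\frac{M-C}{N^2}\sum_{i<j}\|x_i-x_j\|$, which, since $M<C$, is strictly negative away from consensus.

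Next I would convert this differential inequality into a finite-time collapse statement. Using the norm equivalence \eqref{eq:normequivalence}, $\sum_{i<j}\|x_i-x_j\|\geq\sqrt{\sum_{i<j}\|x_i-x_j\|^2}=N\sqrt{2V}$, so $\dot V\leq\frac{(M-C)\sqrt2}{N}\sqrt V=:-k\sqrt V$ with $k>0$, and comparison with the scalar ODE $\dot y=-k\sqrt y$ forces $V$ to vanish in finite time, i.e. the system reaches consensus. To make this rigorous over the whole time interval I must certify that we never leave the region where $sa(s)\geq C$ applies: choosing the initial datum so that $V(0)\leq\frac{\epsilon^2}{2N^2}$ guarantees $\max_{i,j}\|x_i(0)-x_j(0)\|\leq\epsilon$ via $V\geq\frac{1}{2N^2}\max_{i,j}\|x_i-x_j\|^2$, and since $\dot V\leq 0$ on this region, $V$ stays below $\frac{\epsilon^2}{2N^2}$ and hence all distances stay below $\epsilon$ for all time. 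This forward-invariance (self-consistency) of the region is the only genuine point requiring care; everything else is a direct transcription of Theorem \ref{th:BH}.

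The conclusion is that the set $\{x_0:V(x_0)\leq\frac{\epsilon^2}{2N^2}\}$ is contained in the black hole region $\mathcal{R}^M_\mathrm{BH}$ whenever $M<C$, which is the desired existence of a black hole. The main (mild) obstacle is the control bound: one must confirm that the positive contribution of the control to $\dot V$ cannot exceed $\frac{M}{N^2}\sum_{i<j}\|x_i-x_j\|$ under the sparsity constraint, and it is precisely in combining this with the fixed dissipation rate $C$ that the hypothesis $M<C$ is used, this being the crux that distinguishes the present statement from Theorem \ref{th:BH}.
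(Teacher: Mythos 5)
Your proof is correct and follows essentially the same route as the paper's: the same decomposition of $\dot V$ into dissipative and control parts via \eqref{eq:VdotBH}, the same norm-equivalence step \eqref{eq:normequivalence} yielding $\dot V\leq\frac{\sqrt2(M-C)}{N}\sqrt V$, and the same forward-invariance argument to certify the region $\|x_i-x_j\|\leq\epsilon$. Your statement of the initial smallness condition, $V(0)\leq\frac{\epsilon^2}{2N^2}$, is in fact the correct version of the paper's (slightly garbled) condition $V(0)\leq\frac{\epsilon}{\sqrt2 N}$, consistent with the proof of Theorem \ref{th:BH}.
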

\begin{proof}
Suppose that $V(0)\leq \frac{\epsilon}{\sqrt{2}N}$. Then for all $(i,j)\in\{1,\ldots N\}^2$, $\|x_i(0)-x_j(0)\|\leq\sqrt{2}NV(0) = \epsilon$.
Then from \eqref{eq:VdotBH} and \eqref{eq:normequivalence}, while $\|x_i-x_j\|\leq\epsilon$, we have:
$$
\dot{V}\leq \frac{M-C}{N^2}\sum\limits_{i=1}^N\sum\limits_{j= i+1}^N\| x_i-x_j\|\leq \frac{\sqrt{2}(M-C)}{N}\sqrt{V} .
$$
Since $M-C<0$, this ensures that $V$ decreases and that the condition $\|x_i-x_j\|\leq \epsilon$ holds. Hence $V$ tends to 0 in finite time.
\end{proof}

Theorem \ref{th:BH} shows that if the interaction between agents is very strong when they are close to each other (as characterized by the condition $\lim_{s\rightarrow 0} sa(s) = + \infty$), then for every bound $M$ on the control, there exists a zone close to the consensus manifold such that no control in $\UM$ can prevent consensus. We call this phenomenon the \textit{black hole}.
We now look at the behavior of the system far from the clustering set, that is when each pair of agents is sufficiently separated. We show in Sections \ref{Sec:Safety} and \ref{Sec:basin} that depending on the strength of the decrease of $a$ near infinity, there may or may not exist a \textit{safety region} far from the consensus manifold, that is a stable zone (given appropriate control). 

\subsubsection{Safety Region}\label{Sec:Safety}

Here we give sufficient conditions on the potential for the existence of a \textit{safety region}. Given a bound $M$ on the control, there exist initial conditions such that the control can always keep the system away far from clustering.

\begin{definition}
Let $M>0$. 
Construct $W_g$ as in Definition \ref{def:Wg}.
We define the \textit{safety region} as follows:
$$
\mathcal{R}^M_\mathrm{S} = \{x_0\in(\RR^d)^n\; | \; \exists u\in\UM, \; \exists K\in\RR, \; \forall t\geq 0, \; W_g(\phi_{a,u}(x_0,t))\geq K \}.
$$
\end{definition}

\begin{rem}
Notice that from Theorem \ref{th:Wg}, the safety region is equivalently defined by:
$$
\mathcal{R}^M_\mathrm{S} = \{x_0\in(\RR^d)^n\; | \; \exists u\in\UM, \; \exists \epsilon>0, \; \forall t\geq 0, \; \forall i\neq j, \; \|\phi_{a,u}(x_0,t)_i-\phi_{a,u}(x_0,t)_j\|\geq \epsilon\}.
$$
\end{rem}

\begin{theorem}\label{th:safety1}
Let $a$ be an attraction potential such that 
$\lim_{s\rightarrow +\infty} sa(s) = 0 $. 
Then for all bound $M>0$ on the control, there exists a safety region $\mathcal{R}^M_\mathrm{S}\neq \emptyset$. Furthermore, confinement to the safety region can be obtained with the sparse control $u^W\in\UM$ given in \eqref{eq:contW}.
\end{theorem}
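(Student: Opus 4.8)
The plan is to use the sparse feedback $u^W$ from Proposition \ref{prop:contWg} and to show that, sufficiently far from the clustering set, this control forces $\dot W_g \geq 0$, so that $W_g$ can never drop below its initial value; Theorem \ref{th:Wg} then yields a uniform lower bound on all pairwise distances, which is exactly membership in $\mathcal{R}^M_\mathrm{S}$. Concretely, I would first exhibit a forward-invariant region of the form $\mathcal{A}_D := \{x : \|x_i-x_j\|\geq D \text{ for all } i\neq j\}$ for a suitable $D>0$, and then show that every configuration whose pairwise distances are large enough is trapped inside such a region for all time, with $u^W$ as the confining control.

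For the key estimate I start from the expression for $\dot W_g$ in \eqref{eq:Wgdot}, which under the control $u^W$ splits as $\dot W_g = \frac1N\sum_i\langle S_i, f_i(x)\rangle + \frac MN\max_i\|S_i\|$, the last term being the nonnegative contribution of $u^W$ (since $\frac1N\langle S_{i_W},MS_{i_W}/\|S_{i_W}\|\rangle=\frac MN\|S_{i_W}\|$). By Cauchy-Schwarz the free part is bounded below by $-(\max_i\|S_i\|)\,\max_i\|f_i(x)\|$, so that $\dot W_g \geq (\max_i\|S_i\|)\big(\tfrac MN - \max_i\|f_i(x)\|\big)$. On $\mathcal{A}_D$ each summand of $f_i$ obeys $a(\|x_i-x_k\|)\|x_i-x_k\| \leq \sup_{s\geq D} s\,a(s)$, whence $\max_i\|f_i(x)\| \leq \tfrac{N-1}{N}\sup_{s\geq D}s\,a(s)$. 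The hypothesis $\lim_{s\to+\infty}s\,a(s)=0$ lets me choose $D$ so large that $\sup_{s\geq D}s\,a(s) < \tfrac{M}{N-1}$, making the parenthesis strictly positive; as $\max_i\|S_i\|\geq 0$ always, this gives $\dot W_g\geq 0$ throughout $\mathcal{A}_D$.

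The main obstacle is that $W_g$ is an aggregate quantity, so ``$W_g$ nondecreasing'' does not by itself prevent a single pair of agents from drifting together and leaving $\mathcal{A}_D$; I must upgrade the one-shot estimate into a genuine invariance statement. For this I use the quantitative content of Theorem \ref{th:Wg}, namely that $W_g\geq K$ forces $\|x_k-x_l\|\geq \psi(K):=\sqrt{g^{-1}\big(2N^2K-(\tfrac{N(N-1)}2-1)m\big)}$ for every pair, with $m:=\sup g$, and that $\psi(K)\to+\infty$ as $K$ approaches $\sup W_g$. I then fix $K$ with $\psi(K)>D$ and pick $D'>D$ large enough that any configuration with all distances $\geq D'$ satisfies $W_g\geq K$ (possible since $W_g\to\sup W_g$ as the minimal distance tends to infinity). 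A first-exit-time argument closes the proof: if a trajectory starting in $\mathcal{A}_{D'}$ first attained minimal distance $D$ at some time $t_1$, then $\dot W_g\geq 0$ on $[0,t_1]$ would give $W_g(t_1)\geq W_g(0)\geq K$, forcing the minimal distance at $t_1$ to exceed $\psi(K)>D$, a contradiction. Hence the trajectory remains in $\mathcal{A}_D$ for all time, $W_g$ stays above $W_g(0)$, and $\mathcal{A}_{D'}\subseteq \mathcal{R}^M_\mathrm{S}$; since $\mathcal{A}_{D'}\neq\emptyset$, the safety region is nonempty and the confinement is realized by $u^W$.
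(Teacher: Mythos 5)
Your proposal is correct and follows essentially the same route as the paper: the sparse control $u^W$ of Proposition \ref{prop:contWg}, the estimate $\dot W_g \geq \bigl(\max_i \|S_i\|\bigr)\bigl(\tfrac{M}{N}-\epsilon\bigr) \geq 0$ once all pairwise distances exceed a threshold supplied by $\lim_{s\to+\infty} s a(s)=0$, and the quantitative bound from Theorem \ref{th:Wg} to convert $W_g(t)\geq W_g(0)$ back into a uniform lower bound on pairwise distances. The only difference is presentational: the paper encodes the initial condition directly as a lower bound on $W_g(0)$ and leaves the forward-invariance bootstrap implicit, whereas your explicit first-exit-time argument with the strict margin $\psi(K)>D$ spells out precisely the step the paper glosses over.
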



\begin{proof}
From \eqref{eq:Wgdot}, $\dot{W}_g$ is maximized instantaneously by the sparse control \eqref{eq:contW}, and we have:
\begin{equation}\label{eq:Wgdot2}
\max_{u} \dot{W}_g = \frac{1}{N}\sum_{i=1}^N\langle S_i, \frac{1}{N} \sum_{k=1}^N a(\|x_i-x_k\|)(x_i-x_k) \rangle + \frac{M}{N}\|S_{i_W}\|.
\end{equation}


Let $\epsilon<\frac{M}{N}$. Since $\lim_{s\rightarrow +\infty} sa(s) = 0 $, there exists $\mu>0$ such that if for all $i, j$, $\|x_i-x_j\|\geq \mu$, then $\frac{1}{N} \sum_{k=1}^N a(\|x_i-x_k\|)\|x_i-x_k\|\leq\epsilon$. 
 Suppose that at $t=0$, the initial conditions give: $W_g(0)\geq \frac{1}{2N^2} g(\mu^2) + \frac{m}{2N^2}(\frac{N(N-1)}{2}-1)$.
Then from the proof of Theorem \ref{th:Wg}, for all $(i,j)\in \{1,\ldots,N\}^2$, 
$$
\|x_i(0)-x_j(0)\|\geq \sqrt{g^{-1}\left( g(\mu^2)+m(\frac{N(N-1)}{2}-1)-m(\frac{N(N-1)}{2}-1)\right)} = \mu.
$$
Then $\max_{u} \dot{W}_g \geq \|S_{i_W}\|(\frac{M}{N}-\epsilon)\geq 0$. 
Then, choosing the control $u^W$ given by \eqref{eq:contW} that maximizes $\dot{W}_g$ at all time, we ensure that for all $t>0$, $W_g(t)\geq W_g(0)$, so that for all $i,j$, $\|x_i(t)-x_j(t)\|\geq\mu$. Furthermore, $u^W$ is sparse.
\end{proof}

This first theorem covers a wide range of interaction potentials. Given that the interaction potential $a(\cdot)$ decreases enough at infinity, we ensure the existence of a safety zone far from the clustering set. This for instance applies to potentials $a(\cdot)$ with compact support. However, notice that the interaction potential $a(s)=\frac1s$ does not meet the required conditions of Theorem \ref{th:safety1}. Here we state a new theorem dealing with functions that decrease at the speed of $1/s$. 

\begin{theorem}\label{th:safety2}
Let $C\in\RR$ and let $a$ be an attraction potential such that for $s$ large enough, $sa(s) \leq C$. Let $W_g$ be the generalized entropy constructed as in Definition \ref{def:Wg}. If $M>CN$, then there exists a safety zone in which $W_g$ is increasing. Furthermore, confinement to the safety zone can be obtained with the sparse control $u^W\in\UM$ defined by \eqref{eq:contW}.
\end{theorem}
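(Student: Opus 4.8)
The plan is to mirror the proof of Theorem \ref{th:safety1}, replacing the vanishing bound on $s a(s)$ by the constant bound $C$ and using the strict gap $M-CN>0$ to absorb it. First I would invoke the identity \eqref{eq:Wgdot2}: the sparse feedback $u^W$ of \eqref{eq:contW} is optimal for the control contribution $\tfrac1N\sum_i\langle S_i,u_i\rangle$ under $\sum_i\|u_i\|\le M$, so
$$
\max_u \dot W_g = \frac{1}{N}\sum_{i=1}^N\Big\langle S_i, \tfrac{1}{N}\sum_{k=1}^N a(\|x_i-x_k\|)(x_i-x_k)\Big\rangle + \frac{M}{N}\|S_{i_W}\|,
$$
where $\|S_{i_W}\|=\max_i\|S_i\|$. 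The first (uncontrolled) term is governed by the internal attraction, while the second is the strictly positive contribution of the control.

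The key step is a lower bound on the uncontrolled term, valid once every pair of agents is well separated. Fix $s_0>0$ with $s a(s)\le C$ for all $s\ge s_0$, and work on the region where $\|x_i-x_k\|\ge s_0$ for all $i,k$. There, Cauchy--Schwarz gives $\big|\langle S_i, \tfrac1N\sum_k a(\|x_i-x_k\|)(x_i-x_k)\rangle\big| \le \|S_i\|\,\tfrac1N\sum_k a(\|x_i-x_k\|)\|x_i-x_k\| \le \tfrac{N-1}{N}C\,\|S_i\|$, using that each nonzero summand equals $s a(s)\le C$. Summing over $i$ and applying $\tfrac1N\sum_i\|S_i\|\le\|S_{i_W}\|$ bounds the uncontrolled term below by $-C\|S_{i_W}\|$, whence
$$
\max_u \dot W_g \ge \Big(\frac{M}{N}-C\Big)\|S_{i_W}\|\ge 0,
$$
the final inequality holding precisely because $M>CN$. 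Thus in the well-separated region the control $u^W$ renders $\dot W_g$ nonnegative, and strictly positive whenever $\|S_{i_W}\|>0$.

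Finally I would close by the same forward-invariance bootstrap as in Theorem \ref{th:safety1}. Using the $g^{-1}$ lower bound on pairwise distances from the proof of Theorem \ref{th:Wg}, the condition
$$
W_g(x_0)\ge \frac{1}{2N^2}g(s_0^2)+\frac{m}{2N^2}\Big(\frac{N(N-1)}{2}-1\Big),\qquad m:=\sup_{s>0}g(s),
$$
forces all initial pairwise distances to be $\ge s_0$, so $x_0$ lies in the well-separated region. Since $\dot W_g\ge 0$ there under $u^W$, the map $W_g$ is nondecreasing along the controlled trajectory; hence $W_g(t)\ge W_g(x_0)$ keeps every pairwise distance $\ge s_0$ for all $t\ge 0$, so the trajectory never leaves the region. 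This sublevel set is therefore a nonempty safety zone on which $W_g$ is nondecreasing (strictly increasing while the control is active), and confinement is achieved by the sparse $u^W\in\UM$.

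I expect the only delicate point to be the bookkeeping of constants so that the hypothesis threshold $C$ in $M>CN$ survives the passage from the per-pair bound $s a(s)\le C$ to the aggregate estimate: the factor $N$ arises exactly from combining $\tfrac1N\sum_i\|S_i\|\le\|S_{i_W}\|$ with $\tfrac1N\sum_k a(\|x_i-x_k\|)\|x_i-x_k\|\le\tfrac{N-1}{N}C$. Beyond that, the argument is a near-verbatim transcription of the safety-region proof, with the vanishing quantity $\epsilon$ there replaced by the fixed constant $C$ here.
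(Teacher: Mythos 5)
Your proof is correct and takes essentially the same route as the paper's: the identity \eqref{eq:Wgdot2} for $\max_u \dot{W}_g$ under the sparse control $u^W$, a Cauchy--Schwarz bound on the internal drift in the well-separated region, and the $g^{-1}$ bootstrap from Theorem \ref{th:Wg} giving forward invariance of the sublevel condition. Your one deviation is cosmetic but welcome: bounding the drift directly by $\tfrac{N-1}{N}C\,\|S_{i_W}\| \leq C\,\|S_{i_W}\|$ dispenses with the $\epsilon$-slack entirely, which incidentally sidesteps the paper's slightly miscalibrated choice $0<\epsilon<M-CN$ (its estimate $\max_u \dot{W}_g \geq \|S_{i_W}\|\bigl(\tfrac{M}{N}-(C+\epsilon)\bigr)\geq 0$ really requires $\epsilon<\tfrac{M}{N}-C$).
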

\begin{proof}
The proof follows the same structure as the proof of Theorem \ref{th:safety1}. 
Let $0<\epsilon<M-CN$. Since for $s$ large enough, $ sa(s) \leq C $, there exists $\mu>0$ such that if for all $i, j$, $\|x_i-x_j\|\geq \mu$, then $\frac{1}{N} \sum_{k=1}^N a(\|x_i-x_k\|)\|x_i-x_k\|\leq C+\epsilon$. 
 Suppose that at $t=0$, the initial conditions give: $W_g(0)\geq \frac{1}{2N^2} g(\mu^2) + \frac{m}{2N^2}(\frac{N(N-1)}{2}-1).$
Then by Theorem \ref{th:Wg}, for all $(i,j)\in \{1,\ldots,N\}^2$, 
$$
\|x_i(0)-x_j(0)\|\geq \sqrt{g^{-1}\left( g(\mu^2)+m(\frac{N(N-1)}{2}-1)-m(\frac{N(N-1)}{2}-1)\right)} = \mu.
$$
Then using the sparse control $u^W$ defined in \eqref{eq:contW}, we maximize $\dot W_g$ instantaneously as in \eqref{eq:Wgdot2}, with $\max_{u} \dot{W}_g(0) \geq \|S_{i_W}\|(\frac{M}{N}-(C+\epsilon))$. 
If $M>CN$, we can choose a control strategy achieving $\dot{W}_g\geq 0$ at all time, ensuring that for all $t>0$, $W_g(t)\geq W_g(0)$, so that for all $i,j$, $\|x_i(t)-x_j(t)\|\geq\mu$.
\end{proof}

\begin{rem}
The improvement of Theorem \ref{th:safety2} over Theorem \ref{th:safety1} lies in the limit cases of the type $a:s\mapsto\frac{1}{s}$.
In that case, $C=1$ and if $M>N$, then there exists a safety zone far from consensus. 
\end{rem}

\paragraph{Black hole horizon.}
In Sections \ref{Sec:BlackHole} and \ref{Sec:Safety}, we showed the existence of a \textit{black hole} in a neighborhood of the consensus manifold if $\lim_{s\rightarrow 0} sa(s) = + \infty  $ and the existence of a \textit{safety region} far from the clustering set  if $\lim_{s\rightarrow +\infty} sa(s) = 0 $. This suggests the existence of a ``horizon'' between safety and attraction to the black hole for interaction potentials that meet both conditions. The question remains of clarifying this horizon.

\begin{definition}
We define the \emph{black hole horizon} $ \mathcal{H}^M_{\mathrm{BH}}$ as the subset of $(\RR^{d})^N$ given by:
$$\mathcal{H}^M_{\mathrm{BH}} := (\RR^d)^N \setminus (\mathcal{R}^M_\mathrm{BH}\cup \mathcal{R}^M_\mathrm{S}).$$ 
If there is no \emph{safety region} and $\mathcal{R}^M_\mathrm{BH}=(\RR^d)^N$, we say that the \emph{black hole horizon} is \emph{infinite}.\\
If $\mathcal{H}^M_{\mathrm{BH}}=\emptyset$ while $\mathcal{R}^M_\mathrm{BH}\neq \emptyset$ and $\mathcal{R}^M_\mathrm{S}\neq \emptyset$, then the state space $(\RR^d)^N$ is divided between the black hole and the safety region, and we say that the \emph{black hole horizon} is \emph{sharp}.
\end{definition}
The schematic of Figure \ref{fig:horizon} illustrates the black hole horizon enclosed between the safety region and the black hole. 
\begin{figure}
\centering
\begin{tikzpicture}
\draw[fill, color={rgb:black,1;white,2}, draw=black] (4.5,2) ellipse[x radius = 3, y radius = 1.5];
\node [at={(1,1.5)}] {$\mathcal{R}^M_\mathrm{S}$};
\draw [draw, fill, color=white, draw=black] (5,2) ellipse [x radius =2, 
y radius = 1];
\node [at = {(4,2.5)}] {$\mathcal{R}^M_\mathrm{BH}$};
\node [at = {(2,2)}] {$\mathcal{H}^M_{\mathrm{BH}}$};
\end{tikzpicture}
\caption{Schematic representation of the blach hole $\mathcal{R}^M_\mathrm{BH}$, the safety region $\mathcal{R}^M_\mathrm{S}$ and the black hole horizon $\mathcal{H}^M_{\mathrm{BH}}$.}\label{fig:horizon}
\end{figure}
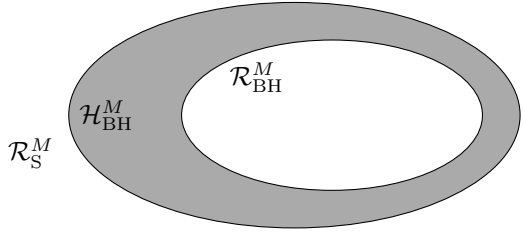

If the attraction potential does not satisfy the hypotheses of Theorems \ref{th:safety1} or \ref{th:safety2}, we cannot ensure the existence of a \textit{safety region}. In fact, we show that in certain cases the \textit{safety region} does not exist and the whole space is a \textit{black hole}, \textit{i.e.}, the \textit{black hole horizon} is infinite.

\begin{lemma}
If $a(s)=1+\frac{1}{s^2}$, there exists $M>0$ such that the \emph{black hole horizon} is infinite. 
\end{lemma}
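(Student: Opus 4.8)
The plan is to exhibit a value of $M$ for which the variance $V$ decays to $0$ in finite time along \emph{every} admissible controlled trajectory, regardless of the initial configuration. This forces $\mathcal{R}^M_\mathrm{BH} = (\RR^d)^N$; since a point of $\mathcal{R}^M_\mathrm{BH}$ cannot simultaneously lie in $\mathcal{R}^M_\mathrm{S}$ (consensus is reached for every control in the first case, avoided for some control in the second), the safety region is then empty and the black hole horizon is infinite by definition.

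First I would specialize the variance estimate \eqref{eq:VdotBH} from the proof of Theorem \ref{th:BH} to $a(s) = 1 + \frac{1}{s^2}$. The crucial algebraic simplification is
\begin{equation*}
a(\|x_i-x_j\|)\,\|x_i-x_j\|^2 = \|x_i-x_j\|^2 + 1 ,
\end{equation*}
so that, using $\sum_{i<j}\|x_i-x_j\|^2 = 2N^2 V$ and the count $\frac{N(N-1)}{2}$ of pairs, the uncontrolled part of $\dot V$ equals exactly $-2V - \frac{N-1}{2N}$. The ``$1$'' in $a$ yields a drift proportional to $V$ (dominant for large $V$), while the ``$\frac{1}{s^2}$'' yields a \emph{constant} negative drift that persists near consensus (dominant for small $V$); this double drift is the structural reason a global black hole can occur.

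Next I would bound the control term uniformly over $u\in\UM$. By Proposition \ref{prop:contV} the control contribution to $\dot V$ is $\frac{1}{N}\sum_{i=1}^N\langle R_i,u_i\rangle$, which by Cauchy--Schwarz and the constraint $\sum_i\|u_i\|\leq M$ is at most $\frac{M}{N}\max_i\|R_i\|$. Since the elementary identity $\sum_i\|R_i\|^2 = 2NV$ gives $\max_i\|R_i\|\leq\sqrt{2NV}$, the control adds at most $M\sqrt{2V/N}$, whence
\begin{equation*}
\dot V \;\leq\; -2V - \frac{N-1}{2N} + M\sqrt{\tfrac{2V}{N}} .
\end{equation*}
Viewing the right-hand side as a downward parabola in $w:=\sqrt V\geq 0$, its maximum over $w\geq 0$ is $\frac{M^2}{4N} - \frac{N-1}{2N}$, which is strictly negative precisely when $M^2 < 2(N-1)$. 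As $N\geq 2$ such $M>0$ exists, and for it $\dot V$ is bounded above by a negative constant along every trajectory and for every control, so $V$ reaches $0$ in finite time from any initial datum. By Lemma \ref{lemma:variance} this means consensus, giving $\mathcal{R}^M_\mathrm{BH} = (\RR^d)^N$.

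The step I expect to require the most care is justifying that the differential inequality persists through partial clustering events occurring before full consensus, where the right-hand side of \eqref{eq:Krause1} may become singular. I would handle this exactly as in Lemma \ref{lemma:prolong}, extending the solution past each such time and observing that $V$ stays Lipschitz and the bound on $\dot V$ holds almost everywhere, so the finite-time collapse conclusion is unaffected.
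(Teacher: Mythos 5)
Your proof is correct, but it takes a genuinely different route from the paper's. The paper splits into two regimes with $M\leq\alpha/\sqrt{2}$, $\alpha<1$: near consensus ($\sqrt{V}\leq\sqrt{2}M$) it uses only $a(s)\geq 1/s^2$ to get a constant negative drift $\dot V\leq -1+2M^2\leq\alpha-1$, and far from consensus it uses only $a(s)\geq 1$ to get $\dot V\leq-\sqrt{2}M\sqrt{V}$, so $V$ decreases until the first regime takes over; finite-time collapse then follows by concatenating the two cases. You instead exploit the exact identity $a(s)s^2=s^2+1$ to fold both effects into a single differential inequality $\dot V\leq -2V-\frac{N-1}{2N}+M\sqrt{2V/N}$ and dispose of the control by maximizing a downward parabola in $\sqrt{V}$, which eliminates the case analysis entirely. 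Your control estimate is also sharper: bounding the control term by $\frac{M}{N}\max_i\|R_i\|$ with $\sum_i\|R_i\|^2=2NV$ gives $M\sqrt{2V/N}$, whereas the paper's cruder pairwise bound gives $M\sqrt{2V}$; this is why you obtain the $N$-dependent admissible range $M^2<2(N-1)$, strictly larger than the paper's $N$-independent $M<1/\sqrt{2}$, while both only need existence of some $M>0$.

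One constant in your argument needs adjusting at partial clustering events, which you flag but do not fully resolve: once a pair has collapsed ($x_i=x_j$), that pair contributes $0$ to the uncontrolled drift, not $-\frac{1}{N^2}$ (the identity $a(s)s^2=s^2+1$ fails at $s=0$, where the extended dynamics of Lemma \ref{lemma:prolong} keep clustered agents together). So after such an event your constant $-\frac{N-1}{2N}$ degrades to $-\frac{p}{N^2}$, where $p$ is the number of distinct pairs; since any non-consensus configuration has $p\geq N-1$, the parabola maximum becomes $\frac{M^2}{4N}-\frac{p}{N^2}$, which stays negative provided $M^2<\frac{4(N-1)}{N}$. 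Tightening your threshold from $M^2<2(N-1)$ to, say, $M\leq 1$ makes the uniform negative bound survive all partial-clustering events for every $N\geq 2$, and the conclusion is unaffected. With that cosmetic fix the proof is complete; note the paper's own proof glosses over the same point.
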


\begin{proof}
Let $M\leq \frac{\alpha}{\sqrt{2}}$ for some $\alpha<1$. \\
First assume that initially $\sqrt{V(0)}\leq\sqrt{2}M$ (\textit{i.e.}, some agents are already close to each other). We study the evolution of the variance $V=\frac{1}{2N^2}\sum_{i=1}^N\sum_{j=1}^N\|x_i-x_j\|^2$: 
\begin{equation}
\frac{dV}{dt} = -\frac{1}{N^2}\sum_{i=1}^N\sum_{j=1}^N a(\|x_i-x_j\|)\|x_i-x_j\|^2 + \frac{1}{N^2}\sum_{i=1}^N\sum_{j=1}^N \langle x_i-x_j, u_i-u_j\rangle .
\end{equation}
The second term is related to $V$ by equivalence of the norms: 
$$
 \frac{1}{N^2}\sum_{i=1}^N\sum_{j=1}^N \langle x_i-x_j, u_i-u_j\rangle \leq M \frac{1}{N^2}\sum_{i=1}^N\sum_{j=1}^N \| x_i-x_j \| \leq M \frac{1}{N^2} N \sqrt{ \sum_{i=1}^N\sum_{j=1}^N \| x_i-x_j \|^2} = M\sqrt{2V}.
$$
Since $a(s)\geq \frac{1}{s^2}$, while $\sqrt{V}\leq\sqrt{2}M$ we have
\begin{equation}
\frac{dV}{dt} \leq -1 + M\sqrt{2V}\leq -1+2M^2 \leq \alpha -1 , 
\end{equation}
so $V$ converges to 0 in finite time. \\
Let us now suppose that $\sqrt{V(0)} > \sqrt{2}M$, so the initial conformation is far from the consensus manifold. While this condition is satisfied, since $a(s)\geq 1$, we write: 
\begin{equation}
\frac{dV}{dt} \leq -2V+M\sqrt{2V} = \sqrt{V}(-2\sqrt{V}+\sqrt{2}M) \leq -\sqrt{V} (\sqrt{2}M).
\end{equation}
So $V$ decreases until $\sqrt{V} = \sqrt{2}M$. When that happens, we are brought back to the first case.
\end{proof}

\subsubsection{Basin of attraction}\label{Sec:basin}

In Theorems \ref{th:safety1} and \ref{th:safety2}, we saw that if $a(\cdot)$ decreases fast enough to $0$ at infinity, then there exists a ``safety'' zone near infinity (\textit{i.e.}, when the agents are far from each other). Here we show that this safety zone does not always exist.

\begin{theorem}\label{th:basin}
Let $M>0$. If $\lim\limits_{s\rightarrow +\infty} sa(s)=+\infty$, then there exists a real number $\mu>0$ and a set $$B:=\{(x_i)_{i\in\{1,\ldots,N\} } \; | \min\limits_{i\neq j} \|x_i-x_j\|\leq \mu \}$$ such that for any initial condition $x(0)$, there exists a finite time $T>0$ such that $x(T)\in B$, for any control $u\in\UM$.
We call $B$ the \emph{basin of attraction}. 
\end{theorem}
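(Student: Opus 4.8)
The plan is to reuse the variance $V$ defined in \eqref{eq:V} as a Lyapunov-type functional, exactly as in the proof of Theorem \ref{th:BH}, but now exploiting the growth of $s\mapsto sa(s)$ at infinity rather than at the origin. The starting point is the very computation already carried out for the black hole: along any trajectory of \eqref{eq:Krause} the derivative of $V$ splits into an intrinsic dissipative part and a control part limited by the budget, giving the bound \eqref{eq:VdotBH}. The whole point is that the control term can increase $V$ by at most a multiple of $M\sum_{i<j}\|x_i-x_j\|$, while the internal attraction produces the term $-\frac{1}{N^2}\sum_{i<j}a(\|x_i-x_j\|)\|x_i-x_j\|^2$; when all pairwise distances are large, this attraction term dominates the control for \emph{every} admissible $u\in\UM$, which is precisely what will force the configuration to contract.

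First I would use the hypothesis $\lim_{s\to+\infty}sa(s)=+\infty$ to fix a threshold: choosing $A=2M$, there exists $S>0$ such that $a(s)\geq 2M/s$, equivalently $a(s)s^2\geq 2M\,s$, for all $s\geq S$. I then set $\mu:=S$ and argue by contradiction, assuming that for some initial datum and some $u\in\UM$ the trajectory never enters $B$, i.e. $\min_{i\neq j}\|x_i(t)-x_j(t)\|>\mu$ for all $t\geq0$. Since the minimum is the smallest of the pairwise distances, this means \emph{all} pairwise distances exceed $S$ at every time, so the pointwise bound $a(\|x_i-x_j\|)\|x_i-x_j\|^2\geq 2M\|x_i-x_j\|$ applies to every pair. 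Substituting into \eqref{eq:VdotBH} and bounding the control term by the prescribed budget $\sum_{i=1}^N\|u_i\|\leq M$ exactly as in Theorem \ref{th:BH} yields
\begin{equation*}
\dot V \leq \frac{M-2M}{N^2}\sum_{i<j}\|x_i-x_j\| = -\frac{M}{N^2}\sum_{i<j}\|x_i-x_j\|,
\end{equation*}
and then the norm equivalence \eqref{eq:normequivalence} gives $\dot V\leq -\frac{\sqrt2\, M}{N}\sqrt V$, i.e. $\frac{d}{dt}\sqrt V\leq -\frac{M}{\sqrt2\,N}$. Hence $\sqrt V$ would reach $0$ in finite time $T\leq \frac{\sqrt2\,N}{M}\sqrt{V(0)}$, which by Lemma \ref{lemma:variance} means consensus, so $\min_{i\neq j}\|x_i-x_j\|=0<\mu$ --- contradicting the standing assumption. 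Therefore the trajectory must enter $B$ at some finite $T$, and since the estimate used only the uniform budget bound, the conclusion holds for every $u\in\UM$.

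The step I expect to be the main obstacle is not the differential inequality itself but bridging the \emph{global} decay of $V$ to the \emph{pointwise} statement about the minimum pairwise distance that defines $B$: a decreasing variance does not by itself force any particular pair to get close. The contradiction argument above is what resolves this, the key structural remark being that ``staying out of $B$ forever'' is equivalent to ``all distances staying above $S$ forever'', and the latter keeps us permanently in the regime $s\geq S$ where the differential inequality is valid, leaving no escape from the forced decay of $V$ to zero. Two minor points would still need a word: the estimate $a(s)\geq 2M/s$ holds only for $s\geq S$, which is exactly what the contradiction hypothesis guarantees along the trajectory; and if a collision occurs before time $T$ the minimum distance already vanishes and the trajectory is trivially in $B$, so the smooth computation may be assumed valid up to the entry into $B$.
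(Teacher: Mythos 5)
Your proof is correct and takes essentially the same route as the paper's: the same decomposition of $\dot V$ into the dissipative term $-\frac{1}{N^2}\sum_{i<j}a(\|x_i-x_j\|)\|x_i-x_j\|^2$ and the budget-bounded control term, the same choice of a threshold $\mu$ from $\lim_{s\to+\infty}sa(s)=+\infty$ (the paper takes any $A>M$ where you fix $A=2M$), and the same contradiction via the differential inequality $\dot V\leq -c\sqrt V$ on the complement of $B$. The only cosmetic difference is the endpoint of the contradiction: the paper stops as soon as $V$ falls below the lower bound $\frac{N-1}{4N}\mu^2$ forced by all pairwise distances exceeding $\mu$, whereas you drive $V$ to zero and invoke consensus via Lemma \ref{lemma:variance} --- both are valid, since under the standing hypothesis the distances stay above $\mu$ and the trajectory remains well-defined.
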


\begin{proof}
Let $A>M$. There exists $\mu>0$ such that if $s>\mu$, then $sa(s)>A$.
Let $$B:=\{(x_i)_{i\in\{1,...,N\}}\in\RR^{dN} \; | \min\limits_{i\neq j} \|x_i-x_j\|\leq \mu\}$$
and
 $$B^c := \RR^{dN} \setminus B = \{(x_i)_{i\in\{1,...,N\}}\in\RR^{dN} \; | \forall (i,j)\in\{1,...,N\}^2, \; i\neq j \implies \|x_i-x_j\|> \mu\}.$$  
Suppose that for all time $t>0$, $x(t)\in B^c$.
Then the variance $V$ decreases as a quadratic function of time. Indeed,
$$
\dot{V} = -\frac{1}{N^2}\sum_{i<j} a(\|x_i-x_j\|)\|x_i-x_j\|^2 + \frac{1}{N^2}\sum_{i<j}\langle x_i-x_j, u_i-u_j\rangle \leq \frac{M-A}{N^2}\sum_{i<j} \|x_i-x_j\| \leq \frac{\sqrt{2}(M-A)}{N}\sqrt{V}.
$$
Then there exists $\tau>0$ such that $V(\tau)\leq \frac{N-1}{4N} \mu^2$.
This contradicts the statement: $x(\tau)\in B^c$, as it would imply: $V(\tau) = \frac{1}{2N^2}\sum_{i=1}^N\sum_{j= i+1}^N \|x_i-x_j\|^2 > \frac{N-1}{4N}\mu^2$.  
Hence there exists $T>0$ such that $x(T)\in B$.

\end{proof}

\begin{rem}
Theorem \ref{th:basin} does not ensure that $x(t)$ stays in the basin of attraction $B$ for all time $t>T$. 
When $x\in B$, it might become possible to act on the system again. If the control allows to obtain again $x\in B^c$, again $V$ becomes strictly decreasing until $x\in B$. 
\end{rem}

As in Sections \ref{Sec:BlackHole} and \ref{Sec:Safety}, we extend Theorem \ref{th:basin} to the case of functions that only satisfy: $sa(s)\geq C$ for $s$ large enough. 

\begin{theorem}\label{th:basin2}
Let $M>0$, and let $C>M$. If there exists $\mu>0$ such that $sa(s)\geq C$ for all $s>\mu$, 
there exists a basin of attraction 
$$B:=\{(x_i)_{i\in\{1,\ldots,N\} } \; | \min\limits_{i\neq j} \|x_i-x_j\|\leq \mu \},$$ 
\textit{i.e.}
 for any initial condition $x(0)$, there exists a finite time $T>0$ such that $x(T)\in B$,
for any control $u\in\UM$.
\end{theorem}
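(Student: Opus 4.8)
The plan is to reproduce the argument of Theorem \ref{th:basin} almost verbatim, the only structural difference being that here the radius $\mu$ and the threshold $C$ are prescribed by hypothesis rather than extracted from the limit $\lim_{s\to+\infty}sa(s)=+\infty$. First I would fix the $\mu>0$ furnished by the assumption $sa(s)\geq C$ for $s>\mu$, and define $B$ together with its complement
\[
B^c = \{(x_i)_{i\in\{1,\ldots,N\}} \mid \forall i\neq j,\ \|x_i-x_j\|>\mu\}
\]
exactly as in the proof of Theorem \ref{th:basin}. The goal is to show that no trajectory of \eqref{eq:Krause} can remain in $B^c$ for all time, whatever the control.

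Second, I would argue by contradiction, assuming $x(t)\in B^c$ for all $t\geq 0$, and track the variance $V$ from \eqref{eq:V}. As already computed for Theorem \ref{th:basin}, its derivative splits as
\[
\dot V = -\frac{1}{N^2}\sum_{i<j} a(\|x_i-x_j\|)\|x_i-x_j\|^2 + \frac{1}{N^2}\sum_{i<j}\langle x_i-x_j, u_i-u_j\rangle.
\]
On $B^c$ every pairwise distance exceeds $\mu$, so the hypothesis gives $a(\|x_i-x_j\|)\|x_i-x_j\|^2 \geq C\|x_i-x_j\|$, while the control term is bounded by $\frac{M}{N^2}\sum_{i<j}\|x_i-x_j\|$ using the $\ell^1$--$\ell^2$ constraint defining $\UM$. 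Combining the two and invoking the norm equivalence \eqref{eq:normequivalence} then yields
\[
\dot V \leq \frac{M-C}{N^2}\sum_{i<j}\|x_i-x_j\| \leq \frac{\sqrt 2\,(M-C)}{N}\sqrt V.
\]

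Third, since $C>M$ the coefficient $M-C$ is strictly negative, so $\sqrt V$ decreases at a uniform positive rate and $V$ must reach the level $\frac{N-1}{4N}\mu^2$ at some finite time $\tau$. But $x(\tau)\in B^c$ would force $V(\tau)=\frac{1}{2N^2}\sum_{i<j}\|x_i-x_j\|^2 > \frac{N-1}{4N}\mu^2$, a contradiction. Hence there is a finite $T$ with $x(T)\in B$, for any $u\in\UM$.

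The argument presents no genuine obstacle beyond Theorem \ref{th:basin}; the one point I would isolate is where the strict inequality $C>M$ enters. In Theorem \ref{th:basin} one could take the comparison constant as large as desired, whereas here $C$ is fixed in advance, so the assumption $C>M$ is precisely what keeps the descent coefficient negative on $B^c$ -- without it the control could in principle balance the attraction and the conclusion would fail. As in the remark following Theorem \ref{th:basin}, this statement only asserts entry into $B$ in finite time and says nothing about $x$ remaining in $B$ afterwards.
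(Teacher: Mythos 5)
Your proposal is correct and follows essentially the same route as the paper: the paper's proof of Theorem \ref{th:basin2} likewise reuses the variance computation and norm equivalence \eqref{eq:normequivalence} from Theorem \ref{th:basin}, obtains the identical bound $\dot V \leq \frac{\sqrt{2}(M-C)}{N}\sqrt{V}$ on $B^c$ using $M<C$, and concludes entry into $B$ in finite time exactly as before. Your added remark on where the strict inequality $C>M$ enters is accurate but does not change the argument.
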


\begin{proof}
Suppose that there exists $\mu\in\RR$ such that for all $s>\mu$, $sa(s)\geq C$. Suppose that the bound on the control is such that $M<C$.
As in the proof of theorem \ref{th:basin}, let $B:=\{(x_i)_{i\in\{1,...,N\}}\in\RR^{dN} \; | \min\limits_{i\neq j} \|x_i-x_j\|\leq \mu\}$ and $B^c:=\RR^{dN}\setminus B$.
Since $M<C$, the variance again decreases as a quadratic function of time. 
$$
\dot{V} = -\frac{1}{N^2}\sum_{i<j} a(\|x_i-x_j\|)\|x_i-x_j\|^2 + \frac{1}{N^2}\sum_{i<j}\langle x_i-x_j, u_i-u_j\rangle \leq \frac{M-C}{N^2}\sum_{i<j} \|x_i-x_j\| \leq \frac{\sqrt{2}(M-C)}{N}\sqrt{V}.
$$
As in the proof of Theorem \ref{th:basin}, this implies that there exists $T>0$ such that $x(T)\in B$.
\end{proof}

\begin{rem}
Notice that Theorems \ref{th:safety2} and \ref{th:basin2} cannot be used together. Indeed, if for $s$ large enough, $C_1 \leq sa(s)\leq C_2$, Theorem \ref{th:safety2} applies if $M>NC_2$ and Theorem \ref{th:basin2} applies if $M<C_1$, which is never satisfied.  
\end{rem}

%
%

\subsubsection{Collapse prevention}\label{Sec:CollapsePrev}

We saw in Section \ref{Sec:BlackHole} that if $\lim_{s\to 0} sa(s) = +\infty$, there exists a \textit{black hole} in which no control allows to avoid clustering. 
On the other hand, we will show that if  $\lim_{s\to 0} sa(s) = 0$, then consensus can always be avoided, in particular with the sparse control $u^V$ strategy defined in Section \ref{Sec:Entropy}.

\begin{theorem}\label{th:consensusprevention}
Suppose that $\lim_{s\to 0} sa(s) = 0$. Let $M>0$. 
Let $R_i:=x_i-\bar{x}$, 
and define $i_V:= \argmax\limits_{i\in\{1,\ldots,N\}}\|R_i\|$.
Then the sparse control strategy $u^V$ defined 
 by 
\begin{equation}\label{eq:contV2}
u^V_i =  \begin{cases}
M\frac{R_{i}}{\|R_{i}\|} \; \text{ for } i=i_V \\
0 \; \text{ for all } i\neq i_V
\end{cases}
\end{equation}
prevents consensus. More specifically, there exists $\delta>0$ such that if $V(0)\leq\delta$,  
then $V(t)>V(0)$ for all $t>0$. 
\end{theorem}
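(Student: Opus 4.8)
The plan is to combine the expression for $\dot V$ derived earlier with the sign analysis enabled by the hypothesis $\lim_{s\to 0}sa(s)=0$, and then to upgrade a local-in-state sign estimate to a global-in-time statement by a continuity (invariance) argument.

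First I would compute $\dot V$ along the trajectories of \eqref{eq:Krause} driven by $u^V$. The uncontrolled part was already evaluated in the proof of Theorem \ref{th:BH} as $-\frac{1}{N^2}\sum_{i<j}a(\|x_i-x_j\|)\|x_i-x_j\|^2$, while the contribution of the control follows from Proposition \ref{prop:contV}: since $\dot V=\frac1N\sum_i\langle R_i,f_i(x)+u_i\rangle$ and $u^V$ acts only on $i_V$ along $R_{i_V}/\|R_{i_V}\|$, that contribution equals $\frac MN\|R_{i_V}\|=\frac MN\max_i\|R_i\|$. Hence
$$
\dot V=-\frac{1}{N^2}\sum_{i<j}a(\|x_i-x_j\|)\|x_i-x_j\|^2+\frac MN\max_i\|R_i\|.
$$

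Next I would prove that $\dot V>0$ on the region $\{0<V\leq\delta\}$ for a suitable $\delta>0$. Fix $\epsilon>0$; the hypothesis provides $\delta_0>0$ with $s\,a(s)\leq\epsilon$ whenever $0\leq s\leq\delta_0$. Choosing $\delta$ small enough that $V\leq\delta$ forces $\max_{i,j}\|x_i-x_j\|\leq\delta_0$ — possible because $V\geq\frac{1}{2N^2}\max_{i,j}\|x_i-x_j\|^2$ — I factor the contractive term as $\sum_{i<j}\big[\|x_i-x_j\|\,a(\|x_i-x_j\|)\big]\|x_i-x_j\|\leq\epsilon\sum_{i<j}\|x_i-x_j\|$ and bound $\sum_{i<j}\|x_i-x_j\|$ by a constant times $\sqrt V$ through the norm equivalence \eqref{eq:normequivalence}. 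For the control term I use the identity $V=\frac{1}{2N}\sum_i\|R_i\|^2$, which gives $\max_i\|R_i\|\geq\sqrt{2V}$, hence $\frac MN\max_i\|R_i\|\geq\frac{\sqrt2 M}{N}\sqrt V$. Combining, $\dot V\geq\big(\frac{\sqrt2 M}{N}-c\,\epsilon\big)\sqrt V$ for a constant $c$ depending only on $N$; taking $\epsilon$ small makes the bracket positive, so $\dot V>0$ throughout $\{0<V\leq\delta\}$. Note that $\max_i\|R_i\|>0$ is equivalent to $V>0$ by Lemma \ref{lemma:variance}, so the control $u^V$ is well defined on this region.

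Finally I would promote this local estimate to the claim. Set $v_0:=V(0)\in(0,\delta]$ and argue by contradiction: if $V(t_1)\leq v_0$ for some $t_1>0$, let $t_2:=\inf\{t>0:V(t)\leq v_0\}$. Since $V(0)=v_0\in(0,\delta]$ gives $\dot V(0)>0$, we have $t_2>0$, and continuity yields $V(t_2)=v_0$ with $V(t)>v_0$ on $(0,t_2)$; the latter forces $\dot V(t_2)\leq0$, contradicting $\dot V(t_2)>0$ (valid because $V(t_2)=v_0\in(0,\delta]$). Thus $V(t)>v_0$ for all $t>0$, and in particular $V$ never vanishes, so consensus is avoided by Lemma \ref{lemma:variance}. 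The main obstacle is exactly this last step: the differential estimate only controls the sign of $\dot V$ while $V\leq\delta$, and $V$ may later exceed $\delta$ where no sign information is available; the invariance argument is what rules out $V$ ever returning to its initial value without requiring any control of the regime $V>\delta$.
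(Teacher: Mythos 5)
Your proof is correct and follows the paper's overall strategy — run the feedback $u^V$, split $\dot V$ into the attraction term $-\frac{1}{N^2}\sum_{i<j}a(\|x_i-x_j\|)\|x_i-x_j\|^2$ plus the control contribution, and use $sa(s)\leq\epsilon$ near zero to let the control win — but it deviates in two substantive details, both to your advantage. First, where the paper bounds the attraction term by $\eta$ (the a priori diameter bound) and then invokes ``$\eta\leq 2\|x_{i_V}-\bar{x}\|^2$,'' an inequality that is dimensionally inconsistent and, as stated, unjustified (from $V(0)\leq\delta$ one only gets an \emph{upper} bound on the diameter, so there is no lower bound by $\eta$; the salvageable reading is that both terms are comparable to the \emph{current} diameter), you compare both terms to $\sqrt{V}$: the attraction term via the norm equivalence \eqref{eq:normequivalence}, and the control term via the exact identity $V=\frac{1}{2N}\sum_i\|R_i\|^2$, giving $\max_i\|R_i\|\geq\sqrt{2V}$. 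This yields the clean inequality $\dot V\geq\bigl(\frac{\sqrt{2}M}{N}-c\,\epsilon\bigr)\sqrt{V}$ on $\{0<V\leq\delta\}$ and quietly repairs the paper's weakest step. Second, the paper concludes with the bare assertion ``Hence $\dot V>0$ which ensures that $V(t)>V(0)$ for all $t>0$,'' even though the sign information is only valid while $V\leq\delta$; your first-crossing argument (at the first return time to $v_0$ the derivative would have to be nonpositive, contradicting the local estimate at a state with $V=v_0\in(0,\delta]$) is precisely the missing justification, and it correctly requires no information in the regime $V>\delta$. Two minor remarks: your control contribution $\frac{M}{N}\|R_{i_V}\|$ is the one consistent with Proposition \ref{prop:contV} (the paper's proof carries $\frac{M}{2N}$, an immaterial lost factor of $2$), and, like the paper, you tacitly read the hypothesis as $0<V(0)\leq\delta$ since $u^V$ is undefined at consensus — which you at least make visible by restricting the sign analysis to $\{0<V\leq\delta\}$.
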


\begin{proof}
Let $M>0$. Let $\epsilon = \frac{M}{8(N-1)}$. Since  $\lim_{s\to 0} sa(s) = 0$, there exists $\eta>0$ such that if for $i,j\in\{1,\ldots,N\}$, $\|x_i-x_j\|\leq \eta$, then $a(\|x_i-x_j\|)\|x_i-x_j\|\leq \epsilon$.
Suppose that $V(0)\leq \delta:= \frac{\eta^2}{2N^2}$. 
Since $V= \frac{1}{2N^2}\sum\limits_{i=1}^N\sum\limits_{j=i+1}^N\|x_i(t)-x_j(t)\|^2$, we have: 
$$
\max\limits_{(i,j)\in\{1,\ldots,N\}^2}\|x_i(0)-x_j(0)\|\leq \sqrt{2\delta}\; N = \eta.
$$
Then using the control $u^V$, we compute: 
\begin{equation*}
\begin{split}
\dot{V} 
& = - \frac{1}{N^2} \sum\limits_{i=1}^N\sum\limits_{j= i+1}^N a(\|x_i-x_j\|)\|x_i-x_j\|^2 +  \frac{1}{2N^2}\sum\limits_{j\neq i_V}\left\langle x_{i_V}-x_j, M \frac{x_{i_V}-\bar{x}}{\|x_{i_V}-\bar{x}\|}\right\rangle \\
& \geq - \frac{1}{N^2} \epsilon \sum\limits_{i=1}^N\sum\limits_{j= i+1}^N \|x_i-x_j\| +  \frac{1}{2N^2} M N \|x_{i_V}-\bar{x}\|\\
& \geq - \frac{1}{N^2} \epsilon N(N-1)\eta  +   \frac{1}{2N} M \|x_{i_V}-\bar{x}\|.
\end{split}
\end{equation*}
By definition of $u^V$ and $i_V$, the distance $\|x_{i_V}-\bar{x}\|$ represents the maximal distance between an agent and the average position of the group. Hence, $\eta\leq 2\|x_{i_V}-\bar{x}\|^2$. Then: 
\begin{equation*}
\begin{split}
\dot{V} 
& \geq - \frac{1}{N} \epsilon (N-1)\eta  +   \frac{1}{2N} M \frac{\eta}{2} = \frac{\eta}{N} ( -\epsilon(N-1) + \frac{M}{4}) \geq \frac{\eta M}{8N}>0 
\end{split}
\end{equation*}
by definition of $\epsilon$.
Hence $\dot V>0$ which ensures that $V(t)>V(0)$ for all $t>0$. 
\end{proof}

Theorem \ref{th:consensusprevention} states that no matter the given strength of the control $M$, if initially the system is not at consensus, it can be kept from converging to consensus. We extend this result with a more general theorem that shows that if the system starts away from the clustering set, then it can be controlled to remain bounded away from the clustering set. 

\begin{theorem}\label{th:clusteringprevention}
Let $M>0$. Suppose that $\lim_{s\to 0} sa(s) = 0$. If for every $(i,j)\in\{1,\ldots,N\}^2$, $\|x_i-x_j\|>0$, there exists a sparse feedback control strategy that prevents clustering. More specifically, 
let $\epsilon <\frac{M}{N}$ and $\delta>0$ such that $s a(s)\leq \epsilon$ for all $s\leq \delta$. Consider a function $g\in C^1((0,+\infty))$ satisfying the conditions of Definition \ref{def:Wg}, with the additional assumptions $m:=\sup g(s)>0$ and $g(\delta^2)=0$. We define the control $u^\delta$ by
\begin{equation}\label{eq:contWdelta}
u^\delta_i =  \begin{cases}
M\frac{S^\delta_{i}}{\|S^\delta_{i}\|} \; \text{ for } i=i_\delta \\
0 \; \text{ for all } i\neq i_\delta
\end{cases}
\end{equation}
where 
 $$S_i^\delta := \sum\limits_{j\neq i, \; \|x_i-x_j\|\geq \delta} g'(\|x_i-x_j\|^2)\langle x_i-x_j\rangle \quad \text{ and } \quad
i_\delta:= \arg\max\limits_{i\in\{1,\ldots,N\}}\|S^\delta_i\|.$$
Then the solution of \eqref{eq:Krause} with control $u^\delta$ satisfies
$$
\|x_i(t)-x_j(t)\|\geq \kappa := \left( g^{-1}(2N^2W_g^\delta(0)-(\frac{N(N-1)}{2}-1)m) \right)^{1/2}
$$
for all time $t\geq 0$, and for every $(i,j)\in\{1,\ldots,N\}^2$.
\end{theorem}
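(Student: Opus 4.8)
The plan is to reproduce, in the near‑clustering regime, the mechanism of the safety‑region results (Theorems \ref{th:safety1}--\ref{th:safety2}): find an entropy‑type quantity that the feedback $u^\delta$ keeps from decreasing, and then turn a lower bound on that quantity into a uniform lower bound on all pairwise distances via the inversion argument already carried out in the proof of Theorem \ref{th:Wg}. Concretely, I would track the truncated entropy
$$
W_g^\delta(t):=\frac{1}{2N^2}\sum_{\substack{1\le i<j\le N\\ \|x_i-x_j\|\ge\delta}} g\big(\|x_i(t)-x_j(t)\|^2\big),
$$
observing that the normalization $g(\delta^2)=0$ is chosen precisely so that a pair crossing the threshold $\|x_i-x_j\|=\delta$ enters or leaves the index set with zero weight. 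This makes $W_g^\delta$ continuous along the flow even though pairs migrate in and out of the truncated sum, so that monotonicity statements are meaningful, and it confines the dangerous $-\infty$ behaviour of $g$ to the truncated‑out pairs.

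First I would differentiate $W_g^\delta$ along \eqref{eq:Krause}. Antisymmetrizing the pairwise sum exactly as in \eqref{eq:Wgdot} rewrites $\dot W_g^\delta$ as $\frac1N\sum_i\langle \tilde S_i^\delta, f_i+u_i\rangle$ with $\tilde S_i^\delta=\frac1N S_i^\delta$, so the control contribution $\frac1N\sum_i\langle \tilde S_i^\delta,u_i\rangle$ is maximized over $\UM$ by concentrating the whole budget $M$ on the index $i_\delta=\arg\max_i\|S_i^\delta\|$ in the direction $S_{i_\delta}^\delta/\|S_{i_\delta}^\delta\|$; this is exactly $u^\delta$, and it yields a gain $\tfrac{M}{N}\|\tilde S_{i_\delta}^\delta\|$. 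The goal is then to bound the uncontrolled drift $\frac1N\sum_i\langle \tilde S_i^\delta,f_i\rangle$ from below by $-\epsilon\,\|\tilde S_{i_\delta}^\delta\|$, so that
$$
\dot W_g^\delta\ \ge\ \|\tilde S_{i_\delta}^\delta\|\Big(\tfrac{M}{N}-\epsilon\Big)\ \ge\ 0
$$
by the choice $\epsilon<M/N$. Here the hypothesis $\lim_{s\to0}sa(s)=0$ enters through $s\,a(s)\le\epsilon$ for $s\le\delta$: the short‑range part of each $f_i$ is uniformly small, and the maximality bound $\|\tilde S_i^\delta\|\le\|\tilde S_{i_\delta}^\delta\|$ together with the norm‑equivalence estimate \eqref{eq:normequivalence} collapses the sum onto the single worst index, exactly as in the safety proof. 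Granting $W_g^\delta(t)\ge W_g^\delta(0)$, the last step is a verbatim repetition of the inversion in Theorem \ref{th:Wg}: solving for $\|x_k-x_l\|^2=g^{-1}\!\big(2N^2W_g^\delta-\sum g(\cdots)\big)$, bounding each omitted term by $m=\sup g$, and using monotonicity of $g^{-1}$, produces the stated $\kappa$.

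The step I expect to be the genuine obstacle is the lower bound on the uncontrolled drift. Unlike the safety region, where every pair is far apart and $sa(s)$ is small at infinity, here the distances are mixed: the assumption controls attraction only at short range, so the \emph{direct} self‑attraction of a pair $(i,j)$ that is still far ($\|x_i-x_j\|\ge\delta$) is not small and contributes a genuinely negative term $-\tfrac2N g'(\|x_i-x_j\|^2)a(\|x_i-x_j\|)\|x_i-x_j\|^2$ to $\dot W_g^\delta$. The truncation and the normalization $g(\delta^2)=0$ are what must absorb this: a far pair can lower its own contribution only down to the value $0$ it carries at the threshold, after which it silently exits the sum, so no far pair can by itself drive $W_g^\delta$ below $W_g^\delta(0)$, whereas the $-\infty$ regime is reached only by pairs with $\|x_i-x_j\|<\delta$, where $s\,a(s)\le\epsilon<M/N$ makes the control strictly dominant.

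Making this dichotomy quantitative is the delicate point, and two features demand care. The first is that a purely instantaneous estimate $\dot W_g^\delta\ge0$ may fail while far pairs are contracting, so I would likely replace naive monotonicity by a barrier argument tracking the minimal distance $\min_{i\ne j}\|x_i-x_j\|$ and showing it cannot cross the level $\kappa$: at such a crossing the offending pair is in the short‑range regime and the available control more than compensates its contraction. The second, and sharpest, difficulty is that $S_{i_\delta}^\delta$ is a \emph{vector} sum subject to cancellation, so when several pairs approach collision simultaneously around a single agent the control direction can be weakened exactly when it is most needed; ruling this out — showing that one sparse actuator still suffices because short‑range attraction is too weak to complete any collapse against a budget with $M>\epsilon N$ — is where the assumption $\epsilon<M/N$ is fully spent, and it is the crux of the proof.
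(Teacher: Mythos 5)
Your truncation goes in the wrong direction, and this is fatal to both halves of your plan. The paper's proof introduces the \emph{danger set} $\mathcal{D}^\delta=\{(i,j)\;|\;i<j,\ \|x_i-x_j\|\leq\delta\}$ and tracks the partial entropy $W_g^\delta=\frac{1}{2N^2}\sum_{(i,j)\in\mathcal{D}^\delta}g(\|x_i-x_j\|^2)$ over the \emph{close} pairs, with $S_i^\delta$ built from those same close pairs. (The ``$\|x_i-x_j\|\geq\delta$'' in the statement's displayed $S_i^\delta$ is inconsistent with the paper's own proof, which uses $\leq\delta$ throughout; you followed the display literally, which is understandable but leads nowhere.) With the close-pair truncation, the two difficulties you single out as ``the crux'' simply disappear. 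First, every pair entering the weighted sums satisfies $\|x_i-x_j\|\leq\delta$, where $sa(s)\leq\epsilon<M/N$ by hypothesis, so the uncontrolled drift is bounded below by $-\epsilon\|S^\delta_{i_\delta}\|$ and the sparse control wins outright: $\dot W_g^\delta\geq\|S^\delta_{i_\delta}\|\bigl(\frac{M}{N}-\epsilon\bigr)>0$ on each interval between the switching times of $\mathcal{D}^\delta$, continuity across switches follows from $g(\delta^2)=0$, and an induction over switching times concludes --- no barrier argument is needed, and your vector-cancellation worry is harmless because the estimate is multiplicative in $\|S^\delta_{i_\delta}\|$ (if it vanishes, all $S_i^\delta$ vanish and the drift bound is trivial). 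Second, the inversion of Theorem \ref{th:Wg} only makes sense for the close-pair functional: every danger pair occurs in the sum with each remaining term bounded above by $m$, so $W_g^\delta(t)\geq W_g^\delta(0)$ yields $\|x_k-x_l\|\geq\kappa$ for danger pairs, while pairs outside $\mathcal{D}^\delta$ are trivially at distance $\geq\delta\geq\kappa$ (the assumptions $m>0$ and $g(\delta^2)=0$ force $W_g^\delta(0)\leq 0$, whence $\kappa\leq\delta$).

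With your far-pair functional neither step survives. A collapsing pair exits your sum (with weight $0$) the moment it crosses $\delta$ and is thereafter invisible, so a lower bound on your $W_g^\delta$ says nothing about $\min_{i\neq j}\|x_i-x_j\|$, and the ``verbatim inversion'' cannot even be set up for a pair with $\|x_k-x_l\|<\delta$: that pair does not appear in the sum you would solve for. Moreover your compensation mechanism --- ``a far pair can lower its own contribution only down to the value $0$ it carries at the threshold'' --- is false: a far pair at large distance carries weight close to $m/(2N^2)$, not $0$, and may contract to the threshold, decreasing your functional by up to that amount; several pairs doing so push $W_g^\delta$ strictly below $W_g^\delta(0)$, so the monotonicity $\dot W_g^\delta\geq 0$ genuinely fails, as you yourself suspected. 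You diagnosed the obstacle honestly, but the repairs you gesture at (a barrier on the minimal distance, ruling out cancellations against a single actuator) are left entirely unproven, and the missing idea is precisely the paper's: restrict the entropy to the short-range pairs, where the hypothesis $\lim_{s\to 0}sa(s)=0$ makes the internal attraction uniformly weaker than the control budget $M/N$.
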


\begin{proof}
Let $\epsilon<\frac{M}{N}$. Since $\lim_{s\to 0} sa(s) = 0$, there exists $\eta>0$ such that if $s\leq\eta$, then $sa(s)\leq \epsilon$.
Let $\delta\geq\eta$. Let $g\in C^1((0,+\infty))$ satisfy the conditions of Definition \ref{def:Wg}, with $m:=\sup g(s)$. Suppose also that $g(\delta^2)=0$. 
We define the ``danger'' set as: $\mathcal{D}^\delta = \{ (i,j)\in\{1,\ldots,N\}^2, \; | \; i<j \text{ and } \|x_i-x_j\|\leq \delta\}$.
We now define a partial generalized entropy using only the distances between pairs of agents belonging to $\mathcal{D}$: $$W_g^\delta = \frac{1}{2N^2}\sum\limits_{(i,j)\in\mathcal{D}^\delta} g(\|x_i-x_j\|^2).$$ Notice that in the case where for all $(i,j)\in\{1,\ldots,N\}^2$, $\|x_i-x_j\|\leq \delta$, then the partial generalized entropy is equal to the generalized entropy: $W_g^\delta=W_g$.
Furthermore, notice that Theorem \ref{th:Wg} can be extended to $W_g^\delta$: if $W_g^\delta\geq K$, then for all $(i,j)\in\{1,\ldots,N\}^2$,
$\|x_i-x_j\|^2\geq g^{-1}(2N^2 K-(\frac{N(N-1)}{2}-1)m)$.
Let $\mathcal{D}^\delta(0)$ and $W_g^\delta(0)$  respectively represent the danger set and the generalized entropy at $t=0$. We will prove that for all $t\geq 0$, for every $ (i,j)\in\{1,\ldots,N\}^2$, 
$$\|x_i-x_j\|\geq \sqrt{g^{-1}(2N^2W_g^\delta(0)-(\frac{N(N-1)}{2}-1)m)}. $$
The derivative of the partial generalized entropy can be written as: 
$$
\dot{W}_g^\delta = \frac{1}{N}\sum\limits_{(i,j)\in\mathcal{D}^\delta} g'(\|x_i-x_j\|^2)\langle x_i-x_j, \frac{1}{N}\sum\limits_{k\neq i} a(\|x_i-x_j\|)(x_i-x_j) +u_i\rangle.
$$
Let $S_i^\delta := \sum\limits_{j, \; (i,j)\in\mathcal{D}^\delta} g'(\|x_i-x_j\|^2)\langle x_i-x_j\rangle$. 
Let $i_\delta:= \arg\max\limits_{i\in\{1,\ldots,N\}}\|S^\delta_i\|$.
As in Proposition \ref{prop:contWg}, the control defined by
\eqref{eq:contWdelta} maximizes $W^\delta_g$ instantaneously.
With this control, the partial generalized entropy's evolution is given by:
$$
\dot{W}_g^\delta = \frac{1}{N}\sum_{i=1}^N \langle S_i^\delta, \frac{1}{N}\sum\limits_{k\neq i} a(\|x_i-x_j\|)(x_i-x_j) \rangle + \|S_{i_\delta}^\delta\| \frac{M}{N}.
$$
For all $(i,j)\in\mathcal{D}^\delta$, $\|x_i-x_j\|\leq \delta\leq \eta$, so $a(\|x_i-x_j\|)\|x_i-x_j\|\leq \epsilon$.
Let $t_\delta:=\inf\{t>0\; | \; \mathcal{D}^\delta(t)\neq \mathcal{D}^\delta(0)\}$.
While $t<t_\delta$, $W_g^\delta$ satisfies: 
$$
\dot{W}_g^\delta \geq \|S_{i_\delta}^\delta\| (-\epsilon+ \frac{M}{N}) >0.
$$
Hence for all $t<t_\delta$, $W_g^\delta(t)\geq W_g^\delta(0)$. This implies that for every $(i,j)\in\mathcal{D}^\delta$, for all $t<t_\delta$, 
\begin{equation}\label{eq:xijboundedbelow}
\|x_i(t)-x_j(t)\|\geq  \kappa:=\left( g^{-1}(2N^2W_g^\delta(0)-(\frac{N(N-1)}{2}-1)m) \right)^{1/2}.
\end{equation}
Since $\delta>\kappa$, the inequality $\|x_i(t)-x_j(t)\|\geq \kappa$ holds true for every $(i,j)\in\{1,\ldots,N\}^2$, for all $t<t_\delta$.
Let us now consider what happens when $t>t_\delta$. Notice that even though $\mathcal{D}^\delta$ changes at $t_\delta$, $W_g^\delta$ is continuous. Indeed,  
\[
\begin{split}
\lim_{\theta\rightarrow 0} & W_g^\delta(t_\delta-\theta)  = \lim_{\theta\rightarrow 0} \sum_{(i,j)\in\mathcal{D}^\delta(t_\delta-\theta)}g(\|x_i(t_\delta-\theta)-x_j(t_\delta-\theta)\|^2) \\
= & 
\lim_{\theta\rightarrow 0} \sum_{\substack{(i,j)\in\\\mathcal{D}^\delta(t_\delta-\theta)\cap\mathcal{D}^\delta(t_\delta+\theta)}}g(\|x_i(t_\delta-\theta)-x_j(t_\delta-\theta)\|^2) 
+
 \lim_{\theta\rightarrow 0} \sum_{\substack{(i,j)\in \\ \mathcal{D}^\delta(t_\delta-\theta)\setminus \mathcal{D}^\delta(t_\delta+\theta)}}g(\|x_i(t_\delta-\theta)-x_j(t_\delta-\theta)\|^2)
 \\ 
= &  \lim_{\theta\rightarrow 0} \sum_{\substack{(i,j)\in\\\mathcal{D}^\delta(t_\delta-\theta)\cap\mathcal{D}^\delta(t_\delta+\theta)}}g(\|x_i(t_\delta-\theta)-x_j(t_\delta-\theta)\|^2)
+ 0
\end{split}
\]
due to the property $g(\delta^2)=0$.
Similarly,
\[
\begin{split}
\lim_{\theta\rightarrow 0} & W_g^\delta(t_\delta+\theta)  = \lim_{\theta\rightarrow 0} \sum_{(i,j)\in\mathcal{D}^\delta(t_\delta+\theta)}g(\|x_i(t_\delta+\theta)-x_j(t_\delta+\theta)\|^2) \\
= & 
\lim_{\theta\rightarrow 0} \sum_{\substack{(i,j)\in\\\mathcal{D}^\delta(t_\delta+\theta)\cap\mathcal{D}^\delta(t_\delta-\theta)}}g(\|x_i(t_\delta+\theta)-x_j(t_\delta+\theta)\|^2)
+
\lim_{\theta\rightarrow 0} \sum_{\substack{(i,j)\in \\ \mathcal{D}^\delta(t_\delta+\theta)\setminus \mathcal{D}^\delta(t_\delta-\theta)}}g(\|x_i(t_\delta+\theta)-x_j(t_\delta+\theta)\|^2)
  \\ 
= &  \lim_{\theta\rightarrow 0} \sum_{\substack{(i,j)\in\\\mathcal{D}^\delta(t_\delta+\theta)\cap\mathcal{D}^\delta(t_\delta-\theta)}}g(\|x_i(t_\delta+\theta)-x_j(t_\delta+\theta)\|^2)
+ 0.
\end{split}
\]
We can now apply the same reasoning as previously starting at $t=t_\delta$. Let $t_\delta':=\inf\{t>t_\delta\; | \; \mathcal{D}^\delta(t)\neq \mathcal{D}^\delta(t_\delta)\}$. We show that equation \eqref{eq:xijboundedbelow} becomes: for all $t_\delta<t<t_\delta'$, for every $(i,j)\in \{1,\ldots,N\}^2$,
\[
\|x_i(t)-x_j(t)\| \geq  \left( g^{-1}(2N^2 W_g^\delta(t_\delta)-m(\frac{N(N-1)}{2}-1)) \right)^{1/2} = \kappa.
\] 
By induction, we obtain:  $\|x_i(t)-x_j(t)\|\geq \kappa$ for all $t\geq 0$, for every $(i,j)\in \{1,\ldots,N\}^2$.

\end{proof}

\section{Macroscopic model (kinetic equation)}\label{Sec:kinetic}

The second part of this paper focuses on the kinetic limit of system \eqref{eq:gensyst} and establishes the kinetic version of the results presented in Section \ref{Sec:finite}.

For every time $t\geq 0$, let $\mu(t,\cdot)\in\P(\R^d)$ be a probability measure representing the density of agents at time $t$.  
We suppose that we are allowed to control a part of the state space denoted by $\omega\subset \R^d$. Denoting by $\chi_\omega$ the characteristic function of $\omega$, we write the control as: $\chi_\omega u$, with $u:\R\times\R^d\rightarrow \R^d$ representing the control strength.
Given $M$, $c>0$, We set the following constraints on $\omega$ and $u$:
\begin{equation}\label{eq:condcontkin}
\begin{cases} 
u\in\UMkin := \{ u : \RR\times\RR^d\rightarrow \RR^d \text{ measurable } \; | \; \|u(t,\cdot)\|_{L^\infty(\RR^d)}\leq M \}\\
 \omega \in\Omega_c := \{\omega \subset \RR^d\; | \; |\omega| \leq c\}
\end{cases}
\end{equation}
where $|\omega| = \int_\omega dx$ is the Lebesgue measure of $\omega$.
As in \cite{PRT15}, the condition $|\omega| \leq c$ allows us to extend the idea of sparse control to the kinetic setting. Instead of acting on a single agent as in the discrete case, we limit the size of the state space region that the control can act on. Another possibility, not explored in this paper, would be to limit the mass of agents that can be controlled, with a condition such as $\int_\omega d\mu_x\leq c<1$, as in \cite{ACMPPRT17, PRT15}.
Let $\xi$ denote the interaction kernel. In the case of the Hegselmann-Krause interaction, 
$$
\xi[\mu](x) = \int_{\R^d} a(\|x-y\|)(y-x)d\mu(y).
$$
Then the kinetic version of \eqref{eq:gensyst} can be written as:
\begin{equation}\label{eq:kindym}
\partial_t\mu + \div((\xi[\mu]+\chi_\omega u)\mu) = 0.
\end{equation}

The well-posedness of the kinetic equation \eqref{eq:kindym} has been established in the case where $a(\cdot)$ is a Lipschitz function (see \cite{PRT15}). Here, we want to allow possible blow-ups of the interaction function $a$, which in turn causes finite-time blow-up of initially regular solutions. The existence and uniqueness of weak measure solutions for kinetic equations with aggregation phenomena was studied in a number of papers, for various classes of interaction potentials, see for instance \cite{CDFLS11} where well-posedness was proven in the case of semi-convex potentials, and \cite{BLR11}, where the authors establish an $L^p$ theory for the multi-dimensional aggregation equation (and all the references within). 

\subsection{Kinetic generalized entropy functional}

\subsubsection{Controlling the kinetic system}

From here onward we will suppose that $\mu$ is of compact support, \textit{i.e.}, $\mu\in\P_c(\R^d)$.
We define consensus in the context of the kinetic formulation. Let $\delta_x$ denote the Dirac mass centered at $x$.
\begin{definition}\label{def:consensuskin}
For any $x_0\in\R^d$, the state $\mu=\delta_{x_0}$ is referred to as \textbf{consensus}.
\end{definition}
As in the discrete case, we
 define the variance of the system which characterizes consensus: 
$$
\V(\mu(t,\cdot)) = \iint \|x-y\|^2 d\mu_x(t)d\mu_y(t).
$$
\begin{lemma}\label{lemma:Vkin}
$\V(\mu)=0$ if and only if $\mu=\delta_{x_0}$ for some $x_0\in\R^d$.
\end{lemma}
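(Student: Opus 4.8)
The forward implication is immediate and I would dispatch it first: if $\mu=\delta_{x_0}$, then the double integral is taken against $\delta_{x_0}\otimes\delta_{x_0}$, which simply evaluates $\|x-y\|^2$ at $x=y=x_0$, so that $\V(\delta_{x_0})=\|x_0-x_0\|^2=0$. The substance of the lemma is therefore the converse, and my plan is to reduce it to the elementary fact that a probability measure with zero variance about its mean is a point mass.

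For the converse, the cleanest route is to rewrite $\V$ in terms of the barycenter. Since $\mu\in\P_c(\R^d)$ is compactly supported, all its moments are finite, so I set $\bar x:=\int_{\R^d} x\,d\mu(x)$. Expanding $\|x-y\|^2=\|x\|^2-2\langle x,y\rangle+\|y\|^2$ and using that $\mu$ is a probability measure (so each single integration against $\mu$ in the product contributes either total mass $1$ or the barycenter), I would obtain the identity
$$
\V(\mu)=2\int_{\R^d}\|x\|^2\,d\mu(x)-2\|\bar x\|^2=2\int_{\R^d}\|x-\bar x\|^2\,d\mu(x).
$$
This reduces the claim to the statement that the genuine variance on the right vanishes precisely when $\mu$ is concentrated at $\bar x$.

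Finishing is then routine: assuming $\V(\mu)=0$, the identity gives $\int_{\R^d}\|x-\bar x\|^2\,d\mu(x)=0$. The integrand is nonnegative and continuous and $\mu$ is a positive measure, so the integral can vanish only if $\|x-\bar x\|^2=0$ for $\mu$-almost every $x$, i.e. $\mu\bigl(\R^d\setminus\{\bar x\}\bigr)=0$. Since $\mu$ has total mass $1$, this forces $\mu(\{\bar x\})=1$, hence $\mu=\delta_{\bar x}$, and we take $x_0=\bar x$. An equivalent alternative that avoids the barycenter is to argue directly on the product measure: $\|x-y\|^2\ge 0$ together with $\V(\mu)=0$ forces $\|x-y\|^2=0$ for $(\mu\otimes\mu)$-almost every $(x,y)$, so $\mu\otimes\mu$ is supported on the diagonal, from which one concludes that $\mu$ is a single Dirac.

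The only genuine obstacle is the measure-theoretic step ``a nonnegative integrand with zero integral vanishes almost everywhere, and a probability measure concentrated on a single point is the corresponding Dirac mass.'' Everything else is a legitimate algebraic expansion, justified because the compact support of $\mu$ guarantees integrability of $\|x\|^2$ and hence the finiteness of all the quantities manipulated above.
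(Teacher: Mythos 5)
Your proof is correct, but it takes a genuinely different route from the paper's. For the converse you pass through the barycenter $\bar x=\int x\,d\mu(x)$ and the identity $\V(\mu)=2\int_{\R^d}\|x-\bar x\|^2\,d\mu(x)$, reducing the lemma to the classical fact that zero variance forces a point mass, via the principle that a nonnegative function with vanishing integral is zero $\mu$-a.e. The paper instead argues by contraposition on the support, with no algebraic expansion at all: if $x_1,x_2\in\supp(\mu)$ with $\|x_1-x_2\|=\delta>0$, then $\mu(B(x_1,\delta/3))>0$ and $\mu(B(x_2,\delta/3))>0$, and since any point of the first ball is at distance at least $\delta/3$ from any point of the second, $\V(\mu)\geq(\delta/3)^2\,\mu(B(x_1,\delta/3))\,\mu(B(x_2,\delta/3))>0$. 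The trade-off: your expansion requires finiteness of second moments (which you correctly flag, and which holds since $\mu\in\P_c(\R^d)$), whereas the paper's ball argument needs no integrability beyond the definition of $\V$ and yields an explicit quantitative lower bound on $\V$ in terms of the spread of the support; in exchange, your route isolates the statistical content of the lemma ($\V$ is twice the variance about the mean) and generalizes verbatim to measures with finite second moment that are not compactly supported. Your alternative argument on the product measure --- $\mu\otimes\mu$ concentrated on the diagonal forces $\mu$ to be a Dirac, since by Fubini $\int\mu(\R^d\setminus\{x\})\,d\mu(x)=0$ --- is likewise moment-free and is the closest in spirit to the paper's, minus the quantitative bound. (Minor nit: continuity of the integrand is not needed for the a.e.-vanishing step, only nonnegativity and measurability.)
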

\begin{proof}
Suppose that there exists $x_0\in\R^d$ such that $\mu = \delta_{x_0}$. Then clearly
$\V(\delta_{x_0}) = 0$.
Conversely, suppose that there exist $(x_1,x_2)\in(\supp(\mu))^2$ such that $\|x_1- x_2\|=\delta>0$. Then $\mu(B(x_1,\frac{\delta}{3}))>0$ and $\mu(B(x_2,\frac{\delta}{3}))>0$ (where $B(x,r)$ denotes the ball of center $x$ and radius $r$). We can write:
$$
\V(\mu) \geq \int_{B(x_1,\frac{\delta}{3})}\int_{B(x_2,\frac{\delta}{3})} \|x-y\|^2 d\mu_xd\mu_y \geq \left(\frac{\delta}{3}\right)^2 \mu(B(x_1,\frac{\delta}{3})) \mu(B(x_2,\frac{\delta}{3})) >0.
$$
\end{proof}

Going further, if the variance is small, we can indeed ensure that the system is concentrated around its center of mass in the following sense:
\begin{lemma}\label{lemma:Vkinmin}
For all $r>0$, $\iint_{\|x-y\|\geq r} d\mu_x d\mu_y \leq \frac{1}{r^2}\V(\mu)$.
\end{lemma}
\begin{proof}
It suffices to observe that for all $r>0$,
\begin{equation*}
\begin{split}
\V \geq \iint_{\|x-y\|\geq r} \|x-y\|^2 d\mu_x d\mu_y 
& \geq r^2 \iint_{\|x-y\|\geq r} d\mu_x d\mu_y
\end{split}
\end{equation*}
and the claim follows.
\end{proof}
Then it is clear that maximizing $\V$ ensures that consensus is avoided. Like in Section \ref{Sec:Entropy}, we design a feedback control strategy maximizing the time derivative of the variance. 

\begin{prop}\label{prop:contVkin}
Let $\bar{x}(t):=\int_{\R^d}xd\mu_x(t)$ denote the center of mass of $\mu(t,\cdot)\in\P_c(\R^d)$.
Let $\Rkin(x,t) := \int (x-y) d\mu_y(t) = x-\bx(t)$.
The control $\chi_\omega u$ such that 
\begin{equation}\label{eq:contVkin}
\omega(t):= \argmax_{\omega\in\Omega_c} \int_\omega \|\Rkin(x,t)\| d\mu_x(t); \quad 
u(x,t) = M\frac{\Rkin(x,t)}{\|\Rkin(x,t)\|} \text{ for all } x\in \omega\setminus\{\bx\}
\end{equation}
maximizes $\dot{V}$ instantaneously with the constraints \eqref{eq:condcontkin}.
\end{prop}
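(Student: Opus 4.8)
The plan is to reproduce, in the kinetic setting, the computation carried out for Proposition~\ref{prop:contV}, replacing the sums over agents by integrals against $\mu$. First I would differentiate $\V(\mu(t,\cdot))=\iint\|x-y\|^2\,d\mu_x(t)\,d\mu_y(t)$ along solutions of \eqref{eq:kindym}. Since the integrand $(x,y)\mapsto\|x-y\|^2$ is smooth and $\mu$ is compactly supported, applying the weak formulation of the continuity equation $\partial_t\mu+\div((\xi[\mu]+\chi_\omega u)\mu)=0$ in each variable gives
\[
\dot{\V}=\iint\Big(\nabla_x\|x-y\|^2\cdot b(x)+\nabla_y\|x-y\|^2\cdot b(y)\Big)\,d\mu_x\,d\mu_y,\qquad b:=\xi[\mu]+\chi_\omega u.
\]
Using $\nabla_x\|x-y\|^2=2(x-y)=-\nabla_y\|x-y\|^2$ together with the symmetry $x\leftrightarrow y$, the two terms coincide; integrating out one variable via $\int(x-y)\,d\mu_y=\Rkin(x,t)$ then yields
\[
\dot{\V}=4\int_{\R^d}\big(\xi[\mu](x)+\chi_\omega(x)\,u(x,t)\big)\cdot\Rkin(x,t)\,d\mu_x(t),
\]
where the positive constant $4$ is irrelevant for the maximization.

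Second, I would isolate the control. The free part $4\int\xi[\mu](x)\cdot\Rkin(x,t)\,d\mu_x$ does not depend on $(u,\omega)$, so maximizing $\dot{\V}$ under the constraints \eqref{eq:condcontkin} reduces to maximizing the linear functional $(u,\omega)\mapsto\int_\omega u(x,t)\cdot\Rkin(x,t)\,d\mu_x(t)$. This decouples: for every admissible $\omega$, the Cauchy--Schwarz inequality gives $u(x)\cdot\Rkin(x,t)\le\|u(x)\|\,\|\Rkin(x,t)\|\le M\|\Rkin(x,t)\|$ pointwise, with equality exactly when $u(x)=M\,\Rkin(x,t)/\|\Rkin(x,t)\|$ on $\{\Rkin\neq0\}=\R^d\setminus\{\bx\}$, the integrand vanishing wherever $\Rkin=0$ irrespective of $u$. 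Hence the best inner value on a given $\omega$ equals $\int_\omega M\|\Rkin(x,t)\|\,d\mu_x$, and the remaining outer problem becomes $\max_{|\omega|\le c}\int_\omega\|\Rkin(x,t)\|\,d\mu_x$, whose maximizer is precisely the set $\omega(t)$ prescribed in \eqref{eq:contVkin}. Combining the two optimizations shows that the feedback $(\omega,u)$ of \eqref{eq:contVkin} maximizes $\dot{\V}$ instantaneously.

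I expect two points to require care. The first is the rigorous justification of the differentiation formula for measure solutions when $a$, and hence $\xi[\mu]$, may blow up near collisions; I would handle this by invoking the well-posedness framework cited above and by noting that only the smooth, control-dependent term $\int_\omega u\cdot\Rkin\,d\mu$ enters the optimization, the singular free term passing through untouched. The second, and the main obstacle, is the set-optimization $\max_{|\omega|\le c}\int_\omega\|\Rkin(x,t)\|\,d\mu_x$: this is a bathtub-principle (Neyman--Pearson-type) problem whose solution concentrates $\omega$ where $\|\Rkin\|$ is largest, necessarily including any atoms of $\mu$ since they carry zero Lebesgue measure. I would establish existence of the maximizer from this superlevel-set structure, so that the $\argmax$ defining $\omega(t)$ in \eqref{eq:contVkin} is well posed and yields the claimed sparse feedback.
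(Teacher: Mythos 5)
Your proof is correct and follows essentially the same route as the paper: differentiate $\V$ along the flow of \eqref{eq:kindym} using the weak formulation and the symmetry in $(x,y)$ to obtain $\dot{\V}=4\iint(x-y)\cdot\xi[\mu]\,d\mu_x\,d\mu_y+4\int\Rkin\cdot\chi_\omega u\,d\mu_x$, then maximize the control-dependent linear term. The only difference is that you spell out the decoupled optimization (pointwise Cauchy--Schwarz for $u$, then the bathtub-type set problem for $\omega$, including existence of the $\argmax$), which the paper leaves implicit with ``Hence the control given by \eqref{eq:contVkin} maximizes $\dot{\V}$''; these added details are accurate and strengthen rather than alter the argument.
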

\begin{proof}
We calculate the time derivative of the kinetic entropy: 
\begin{equation*}
\begin{split}
\DDt\V(\mu(t,\cdot)) & = 2 \iint \|x-y\|^2 d\left( -\div_x((\xi[\mu](x,t)+\chi_{\omega(t)} u(t,x))\mu_x(t)\right) d\mu_y(t) \\
 & = 2 \iint \nabla_x(\|x-y\|^2)\cdot(\xi[\mu](x,t)+\chi_{\omega(t)} u(t,x))d\mu_x(t) d\mu_y(t) \\
 & = 4 \iint (x-y) \cdot\xi[\mu](x,t)d\mu_x(t) d\mu_y(t)+ 4 \int (x-\bx(t)) \cdot\chi_{\omega(t)} u(t,x)d\mu_x(t).
\end{split}
\end{equation*}
Hence the control given by \eqref{eq:contVkin} maximizes $\dot \V$.
\end{proof}

As for the finite-dimensional model of Section \ref{Sec:finite}, we aim not only to prevent convergence of the system to consensus, but also any clustering. 
We define kinetic clustering as follows:
\begin{definition}\label{def:clusterkin}
If $\mu\in\P(\R^d)$ contains at least one point mass,
we say that $\mu$ is in clustered state. 
\end{definition} 
\begin{rem}
In Definition \ref{def:clusterkin}, the number of Dirac masses contained in $\mu$ represents the number of clusters. The absolutely continuous part of $\mu$ represents the non-clustered agents. Notice that consensus is a special case of clustering, with $\tilde\mu = 0$.
\end{rem}
As in the discrete case, the strict positivity of $\V$ is not enough to ensure avoidance of clustering.
Indeed, Let $\mu=\frac{1}{2}(\delta_{x_1}+\delta_{x_2})$, with $(x_1,x_2)\in(\R^d)^2$, $x_1\neq x_2$. Then $\mu$ is in a clustered state and yet $\V(\mu)=\frac{1}{2}\|x_1-x_2\|^2>0$. 
As in Section \ref{Sec:Entropy}, we define the kinetic generalized entropy: given a function $g\in C^1(\R^d)$ satisfying the hypotheses of Definition \ref{def:Wg}, 
$$
\Wg(\mu(t,\cdot)) = \iint g(\|x-y\|^2) d\mu_x(t)d\mu_y(t).
$$
The kinetic counterpart of Theorem \ref{th:Wg} shows that maximizing the kinetic generalized entropy prevents clustering. 
\begin{lemma}\label{lemma:mumin}
Suppose that $\Wg(\mu)\geq K \in\R$ and $\sup_{s>0} g(s)=m$. Then for all $r$ small enough, 
\begin{equation}\label{eq:lemmakin}
\iint_{\|x-y\|\leq r} d\mu_x d\mu_y \leq \frac{m-K}{-g(r^2)}.
\end{equation}
\end{lemma}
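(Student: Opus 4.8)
The plan is to exploit the additivity of integration against the product measure $\mu\otimes\mu$ by splitting the domain according to whether $\|x-y\|\le r$ or $\|x-y\|>r$, and then to bound the integrand from above on each piece using the two defining features of $g$: its monotonicity and the finiteness of its supremum $m$. Writing $P:=\iint_{\|x-y\|\le r} d\mu_x\,d\mu_y$ for the near-diagonal mass we wish to control, I would begin from the decomposition
\begin{equation*}
\Wg(\mu)=\iint_{\|x-y\|\le r} g(\|x-y\|^2)\,d\mu_x\,d\mu_y+\iint_{\|x-y\|>r} g(\|x-y\|^2)\,d\mu_x\,d\mu_y.
\end{equation*}

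On the near-diagonal piece $\{\|x-y\|\le r\}$, monotonicity of $g$ gives $g(\|x-y\|^2)\le g(r^2)$, so this contribution is at most $g(r^2)\,P$. On the far piece $\{\|x-y\|>r\}$, the bound $g\le m=\sup_{s>0}g(s)$, together with the fact that $\mu\otimes\mu$ is a probability measure, yields an upper bound of $m$. Combining these with the hypothesis $K\le\Wg(\mu)$ collapses everything to the single scalar inequality $K\le g(r^2)\,P+m$.

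It then remains only to solve for $P$. This is exactly where the phrase \emph{for all $r$ small enough} is used: since $\lim_{s\to 0}g(s)=-\infty$, we have $g(r^2)<0$ once $r$ is small, so dividing the rearranged inequality $g(r^2)\,P\ge K-m$ by the negative number $g(r^2)$ reverses it and produces
\begin{equation*}
P\le\frac{K-m}{g(r^2)}=\frac{m-K}{-g(r^2)},
\end{equation*}
which is the asserted estimate. One checks for consistency that the right-hand side is nonnegative: indeed $\Wg(\mu)\le m$ automatically, whence $m-K\ge 0$.

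The only genuinely delicate point is the bookkeeping of signs, which enters twice. The first occurrence is in replacing the far-piece integral $m\cdot\mu\otimes\mu(\{\|x-y\|>r\})$ by $m$, a step that needs $m\ge 0$ so that scaling by a mass in $[0,1]$ does not increase it; if one prefers not to assume $m\ge 0$, one retains the sharper far-piece bound $m(1-P)$, obtains $P\le (m-K)/(m-g(r^2))$, and recovers the stated form when $m\ge 0$. The second occurrence is the sign flip when dividing by $g(r^2)<0$. Beyond these elementary sign considerations the argument is a one-line rearrangement, and no regularity of $\mu$ beyond its being a compactly supported probability measure is required.
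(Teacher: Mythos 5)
Your proof is correct and follows essentially the same route as the paper's: the same decomposition of $\Wg(\mu)$ over $\{\|x-y\|\leq r\}$ and $\{\|x-y\|>r\}$, the bounds $g(\|x-y\|^2)\leq g(r^2)$ and $g\leq m$ on the respective pieces, and division by the negative quantity $g(r^2)$ for $r$ small. Your side remark that bounding the far piece by $m$ rather than $m\bigl(1-P\bigr)$ implicitly uses $m\geq 0$ flags a minor sign subtlety that the paper's one-line proof glosses over, and your sharper fallback bound $P\leq (m-K)/(m-g(r^2))$ handles that corner case correctly.
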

\begin{proof} 
We have:
\begin{equation*}
\Wg = \iint\limits_{\|x-y\|\leq r}g(\|x-y\|^2) d\mu_x d\mu_y+\iint\limits_{\|x-y\|> r}g(\|x-y\|^2) d\mu_x d\mu_y  \leq g(r^2)\iint_{\|x-y\|\leq r} d\mu_x d\mu_y + m.
\end{equation*}
For $r$ small enough, $g(r^2)<0$, so dividing by $g(r^2)$ yields \eqref{eq:lemmakin}.
\end{proof}
Lemma \ref{lemma:mumin} implies that if $\Wg>K>0$, $\lim_{r\rightarrow 0}\iint_{\|x-y\|\leq r} d\mu_x d\mu_y=0$. Hence $\mu$ cannot be in a clustered state.
Indeed, if $\mu = \alpha \delta_{x_0} + \tilde{\mu}$, we have $ \iint_{\|x-y\|=0} d\mu_x d\mu_y \geq \iint_{\|x-y\|=0} \alpha^2 d\delta_{x_0} d\delta_{x_0} = \alpha^2>0$. 

As in Section \ref{Sec:Entropy}, we design
a control strategy maximizing $\dot{\Wg}$, in order to steer the system away from clustering.

\begin{prop}\label{prop:Wgkin}
Consider $\mu\in\P_c(\R^d)$ and its kinetic generalized entropy $\Skin(x) = \int g'(\|x-y\|^2)(x-y)d\mu(t,y)$.
Let $M,c>0$ and let $u$ and $\omega$ satisfying the conditions \eqref{eq:condcontkin}.
The control $\chi_\omega u$ such that 
\begin{equation}\label{eq:contWkin}
\omega(t):= \argmax_{\omega\in\Omega_c} \int_\omega \Skin(x,t) d\mu(t,x); \quad
u(x,t) = M\frac{\Skin(x,t)}{\|\Skin(x,t)\|}
\end{equation}
maximizes $\dot{\Wg}$ instantaneously.
\end{prop}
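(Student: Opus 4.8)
The plan is to mirror the computation carried out for the variance in Proposition~\ref{prop:contVkin}, with the quadratic weight $\|x-y\|^2$ replaced by $g(\|x-y\|^2)$. First I would differentiate $\Wg(\mu(t,\cdot))$ along the flow of the continuity equation \eqref{eq:kindym}. Writing the pairwise weight as a symmetric function of the two integration variables and using the weak (distributional) formulation of the transport equation separately in the $x$-variable and in the $y$-variable, the time derivative produces two contributions in which the divergence term is integrated by parts against the gradient of the weight. Since $g(\|x-y\|^2)$ is symmetric under exchange of $x$ and $y$, these two contributions coincide, yielding a factor $2$; the gradient $\nabla_x g(\|x-y\|^2) = 2 g'(\|x-y\|^2)(x-y)$ contributes a further factor $2$.

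Carrying this out gives
\begin{equation*}
\DDt \Wg(\mu(t,\cdot)) = 4 \iint g'(\|x-y\|^2)(x-y)\cdot\big(\xi[\mu](x,t)+\chi_{\omega(t)}u(t,x)\big)\, d\mu_x(t)\, d\mu_y(t).
\end{equation*}
Recognizing the inner $y$-integral as $\Skin(x,t)=\int g'(\|x-y\|^2)(x-y)\,d\mu_y(t)$, this collapses to
\begin{equation*}
\DDt \Wg = 4\int \Skin(x,t)\cdot\xi[\mu](x,t)\,d\mu_x(t) + 4\int_{\omega(t)} \Skin(x,t)\cdot u(t,x)\,d\mu_x(t).
\end{equation*}
The first term is independent of the control and hence fixed; only the second term, supported on $\omega(t)$, can be optimized.

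The optimization then proceeds in two nested steps. For a fixed admissible region $\omega$, the Cauchy--Schwarz inequality gives the pointwise bound $\Skin(x,t)\cdot u(t,x)\leq M\|\Skin(x,t)\|$ under the constraint $\|u(t,\cdot)\|_{L^\infty}\leq M$, with equality exactly when $u(t,x)=M\,\Skin(x,t)/\|\Skin(x,t)\|$, which is the amplitude prescribed in \eqref{eq:contWkin}. Substituting this optimal amplitude turns the control term into $4M\int_\omega \|\Skin(x,t)\|\,d\mu_x(t)$, and maximizing over $\omega\in\Omega_c$ (that is, over sets of Lebesgue measure at most $c$) then forces $\omega(t)$ to be the region where the weighted magnitude $\|\Skin\|$ is concentrated, i.e.\ the $\argmax$ in \eqref{eq:contWkin}. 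This is exactly the selection rule of the proposition, so the claimed control instantaneously maximizes $\dot{\Wg}$.

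The main obstacle I anticipate is the rigorous justification of differentiating $\Wg$ under the double integral and of the integration by parts, given that $g'$ may be singular as $\|x-y\|\to 0$ (since $g\to-\infty$ at the origin) and that $\mu$ may develop point masses; strictly speaking the computation is valid where the integrand is integrable, and one should argue along the lines of the well-posedness theory invoked after \eqref{eq:kindym}. A secondary, more bookkeeping-type point is the two-level nature of the optimization: one must verify that choosing the amplitude pointwise optimal and \emph{then} optimizing the set $\omega$ indeed yields the global optimum over admissible pairs $(\omega,u)$, which follows because the amplitude choice is optimal uniformly in $\omega$.
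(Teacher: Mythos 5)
Your proof follows essentially the same route as the paper's: differentiate $\Wg$ along the flow via the weak formulation, use the symmetry of $g(\|x-y\|^2)$ in $(x,y)$ and the chain rule to obtain $\dot{\Wg} = 4\int \Skin(x,t)\cdot\bigl(\xi[\mu](x,t)+\chi_{\omega(t)}u(t,x)\bigr)\,d\mu_x(t)$, and then maximize the control term. Your explicit two-step optimization (pointwise Cauchy--Schwarz fixing the amplitude $u=M\,\Skin/\|\Skin\|$, then selecting $\omega$ to maximize $\int_\omega\|\Skin\|\,d\mu_x$ --- correctly inserting the norm that the statement's $\argmax$ implicitly requires, since $\Skin$ is vector-valued) and your regularity caveat merely spell out steps the paper's proof leaves implicit.
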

\begin{proof}
 The time derivative of $W_g$ can be computed as:
\begin{equation*}
\begin{split}
\frac{d}{dt}\Wg(t) & = 2 \iint g(\|x-y\|^2) d(-\div(( \xi[\mu](x,t) + \chi_\omega(t) u(x,t))\mu_x(t)))d\mu_y(t) \\
& = 2 \iint \nabla_x g(\|x-y\|^2) \cdot ( \xi[\mu](x,t) + \chi_\omega(t) u(x,t)) d\mu_x(t)d\mu_y(t) \\
& = 4 \int \left(\int g'(\|x-y\|^2) (x-y) d\mu_y(t) \right)\cdot ( \xi[\mu](x,t) + \chi_\omega(t) u(x,t)) d\mu_x(t).
\end{split}
\end{equation*}
Let $\Skin(x,t):= \int g'(\|x-y\|^2) (x-y) d\mu_y(t) $. Then 
\begin{equation*}
\frac{d}{dt}\Wg(t) = 2 \int_{\RR^d} \Skin(x,t) \cdot X[\mu](x) d\mu_x(t) + 2 \int_\omega \Skin(x,t) \cdot u \; d\mu_x(t).
\end{equation*}
Hence the derivative of $\Wg$ is maximized by the control $\chi_\omega u$ defined by \eqref{eq:contWkin}.
\end{proof}

\subsubsection{Behavior of the kinetic Hegselmann-Krause system without control}

Before studying the controlled system, we look at its behavior without control in two specific cases.
Firstly, we can prove that if $a(\cdot)$ is bounded below, then the system tends to consensus exponentially.
Secondly, if $s\mapsto s a(s)$ is bounded at $0$, and if $\mu_0$ is of compact support, then the system tends to consensus in finite time.

\begin{theorem}\label{th:Vkinconvergence}
Let $\mu$ satisfy the kinetic Hegselmann-Krause system (without control)
\begin{equation}\label{eq:kinsystnocontrol}
\begin{cases}
\partial_t\mu + \div(\xi[\mu]\mu) = 0\\
\mu(0) = \mu_0
\end{cases}
\end{equation}
where the convolution kernel is $\xi[\mu](x) = \int_{\R^d} a(\|x-y\|)(y-x)d\mu(y)$, for a  continuous function $a\in C(\R^+,\R^+)$.
If $a$ is bounded away from zero, then $\V(\mu(t,\cdot))$ tends to zero exponentially.
\end{theorem}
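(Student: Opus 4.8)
The plan is to obtain a closed differential inequality for the variance $\V(\mu(t,\cdot))$ and then conclude by Grönwall. The natural starting point is the expression for $\DDt\V$ already computed in the proof of Proposition~\ref{prop:contVkin}: setting the control to zero there leaves $\DDt\V = 4\iint (x-y)\cdot\xi[\mu](x,t)\,d\mu_x(t)\,d\mu_y(t)$. Integrating out the $y$ variable (so that $\int(x-y)\,d\mu_y = x-\bx$) and inserting the definition $\xi[\mu](x)=\int a(\|x-z\|)(z-x)\,d\mu_z$, this becomes $\DDt\V = 4\iint (x-\bx)\cdot a(\|x-z\|)(z-x)\,d\mu_x\,d\mu_z$.

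The key step is a symmetrization of this double integral. Relabelling the dummy variables $x\leftrightarrow z$ produces the equal expression $4\iint (z-\bx)\cdot a(\|x-z\|)(x-z)\,d\mu_x\,d\mu_z$; averaging the two representations and noting that the bracketed scalar product collapses, $(x-\bx)\cdot(z-x)+(z-\bx)\cdot(x-z)=(z-x)\cdot(x-z)=-\|x-z\|^2$ (the $\bx$ terms cancel), I would arrive at the clean dissipation identity $\DDt\V = -2\iint a(\|x-z\|)\|x-z\|^2\,d\mu_x\,d\mu_z$. It is worth emphasizing that this identity holds for any positive $a$ and is independent of the position of the center of mass, so it already exhibits the contractive nature of the uncontrolled Hegselmann-Krause dynamics.

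Assuming now that $a(s)\geq a_0>0$ for all $s\geq 0$, the integrand is bounded below by $a_0\|x-z\|^2$, whence $\DDt\V \leq -2a_0\,\V$. Grönwall's lemma then gives $\V(\mu(t,\cdot))\leq \V(\mu_0)\,e^{-2a_0 t}$, which is the claimed exponential decay; by Lemma~\ref{lemma:Vkin} this is precisely convergence to consensus.

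The computation itself is short, so the care required lies essentially in its justification rather than in any deep obstacle. First, I would invoke the compact-support hypothesis: since the flow is contractive, $\supp\mu(t)$ remains inside the convex hull of $\supp\mu_0$, so all pairwise distances $\|x-z\|$ stay bounded and the continuous function $a$ is bounded on the relevant range, guaranteeing that every integral above is finite and that differentiation under the integral sign along the weak formulation of \eqref{eq:kinsystnocontrol} is legitimate. Second, the formula for $\DDt\V$ presupposes existence and uniqueness of a measure solution of \eqref{eq:kinsystnocontrol}, which I take as granted from the well-posedness references discussed before the statement. Thus the only genuinely technical points are verifying the symmetrization step and the integrability afforded by compact support; neither presents a real difficulty.
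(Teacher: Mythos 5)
Your proposal is correct and follows essentially the same route as the paper: both derive the dissipation identity $\DDt\V = -2\iint a(\|x-y\|)\,\|x-y\|^2\,d\mu_x\,d\mu_y$ (the paper by combining the transport terms in $x$ and $y$, you by symmetrizing the one-sided factor-$4$ expression from Proposition~\ref{prop:contVkin} under the relabeling $x\leftrightarrow z$ — the same symmetrization in different clothing), and then both conclude $\DDt\V\leq -2C\V$ and exponential decay via Gr\"onwall. Your added remarks on compact support and justification of differentiation under the integral are sound and, if anything, slightly more careful than the paper's proof.
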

\begin{proof}
We compute the time-derivative of the kinetic variance:
\begin{equation*}
\begin{split}
\DDt \V(\mu) & =  \iint \|x-y\|^2 d(-\div_x(\xi[\mu_x]\mu_x))d\mu_y + \iint \|x-y\|^2 d\mu_x d(-\div_y(\xi[\mu_y]\mu_y)) \\
& = 2 \iint (x-y) \xi[\mu_x] d\mu_x d\mu_y + 2 \iint (y-x) \xi[\mu_y] d\mu_y d\mu_y \\
& = 2 \iiint (x-z) \cdot a(\|x-y\|)(y-x)d\mu_y d\mu_x d\mu_z + 2 \iint (y-z)\cdot a(\|y-x\|)(x-y) d\mu_x d\mu_y d\mu_z \\
& = - 2 \iint\|x-y\|^2 a(\|x-y\|) d\mu_x d\mu_y.
\end{split}
\end{equation*}
If there exists $C>0$ such that $a(s)>C$ for all $s\in\R^+$, then
$
\DDt \V \leq -2 C \V.
$
and thus $\V(\mu(t,\cdot))\leq \V(\mu_0)\exp(-2Ct)$.
\end{proof}

We now look at the behavior of measures with initially compact support, \textit{i.e.}, $\mu_0\in\P_c(\R^d)$. As in \cite{PRT15}, we define the size $X$ of the support of $\mu$ as the radius of the smallest ball centered at $\bx$ and containing the support of $\mu$. More precisely, given a time-evolving measure $\mu(t)\in\P_c(\R^d)$,
\begin{equation}\label{eq:X}
X(t) = \inf \{ X\geq 0 \; | \; \supp(\mu(t))\subseteq B(\bx(t),X)\}.
\end{equation}

\begin{theorem}\label{th:X}
Let $\mu_0\in\P_c(\R^d)$ and suppose that $\mu(t)$ is the solution of \eqref{eq:kinsystnocontrol}. 
Suppose that $a\in\Lip(\R^+,\R^+)$  and that
$s\mapsto a(s)$ is bounded below by $C>0$.
Let $X(t)$ represent the size of the support of $\mu(t)$, as given by \eqref{eq:X}. Then $X$ converges exponentially to zero.
\end{theorem}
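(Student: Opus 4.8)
The plan is to track the characteristic flow of \eqref{eq:kinsystnocontrol} and show that the outermost point of the support contracts toward the center of mass at an exponential rate governed by the lower bound $C$. First I would record the two structural facts that make this possible. Since $a$ is Lipschitz and $\mu_0$ has compact support, the velocity field $x\mapsto\xi[\mu(t)](x)$ is (locally) Lipschitz along the evolution, so the characteristic flow $\Phi_t$ is well-defined and $\supp(\mu(t))=\Phi_t(\supp(\mu_0))$. Moreover, by the antisymmetry $a(\|x-y\|)(y-x)=-a(\|y-x\|)(x-y)$, integrating the velocity field against $\mu$ gives $\DDt\bx(t)=\iint a(\|x-y\|)(y-x)\,d\mu_x\,d\mu_y=0$, so the center of mass $\bx$ is constant in time. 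Consequently it suffices to bound the growth of $X(t)^2=\sup_{x\in\supp(\mu(t))}\|x-\bx\|^2$.

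The core of the argument is a pointwise computation at a boundary point. Let $x$ belong to $\supp(\mu(t))$ with $\|x-\bx\|=X(t)$ maximal, and follow it along its characteristic so that $\dot x=\xi[\mu(t)](x)$; since $\bx$ is fixed, $\DDt\|x-\bx\|^2=2\langle x-\bx,\,\xi[\mu(t)](x)\rangle$. Writing $\langle x-\bx,\,y-x\rangle=\langle x-\bx,\,y-\bx\rangle-\|x-\bx\|^2$ and using Cauchy--Schwarz together with $\|y-\bx\|\le X(t)$ for every $y\in\supp(\mu(t))$ shows that $\langle x-\bx,\,y-x\rangle\le 0$ for all such $y$. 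Since this integrand is nonpositive and $a\ge C>0$, we have $a(\|x-y\|)\langle x-\bx,\,y-x\rangle\le C\langle x-\bx,\,y-x\rangle$ pointwise, whence
\begin{equation*}
\langle x-\bx,\,\xi[\mu(t)](x)\rangle=\int a(\|x-y\|)\langle x-\bx,\,y-x\rangle\,d\mu_y\le C\int\langle x-\bx,\,y-x\rangle\,d\mu_y.
\end{equation*}
The last integral equals $\langle x-\bx,\,\bx-x\rangle=-X(t)^2$ because $\int(y-\bx)\,d\mu_y=0$, giving $\DDt\|x-\bx\|^2\le-2C\,X(t)^2$ at the maximizing point.

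It then remains to pass from this pointwise estimate to a differential inequality for $X(t)^2$ itself and to integrate it. Applying the previous bound at a maximizer yields, in the sense of the upper Dini derivative, $D^+X(t)^2\le-2C\,X(t)^2$, so Gr\"onwall's inequality gives $X(t)^2\le X(0)^2e^{-2Ct}$, i.e.\ $X(t)\le X(0)e^{-Ct}\to 0$, proving exponential convergence. I expect the main obstacle to be precisely this last passage: $X(t)$ is defined as a supremum over the (moving) support, so it need not be differentiable, and one must justify the Dini-derivative estimate by an envelope (Danskin-type) argument, using the continuity of $(t,x_0)\mapsto\|\Phi_t(x_0)-\bx\|^2$ on the compact set $\supp(\mu_0)$ and the fact that the maximum is attained. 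This is also the only place where the Lipschitz hypothesis on $a$ is genuinely needed (to guarantee a well-posed flow and a continuously moving support); the lower bound $C$ supplies the decay rate.
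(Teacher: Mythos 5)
Your proof is correct and follows essentially the same route as the paper: track the characteristic flow, bound $\DDt\|x(t,x_0)-\bx\|^2$ at a maximizer of the support radius using the sign of $\langle x-\bx,\,y-x\rangle$ and the lower bound $a\geq C$, handle the nonsmoothness of the supremum by a Danskin-type envelope argument, and conclude $X(t)\leq X(0)e^{-Ct}$ by Gr\"onwall. If anything, you supply two details the paper leaves implicit — the conservation of the center of mass $\bx$ (which the paper's computation of $\DDt\|x(t,x_0)-\bx(t)\|^2$ tacitly uses) and the Cauchy--Schwarz justification of the convexity inequality — so the argument matches and slightly tightens the original.
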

\begin{proof}
The proof follows the same argument as that done in \cite{PRT15}. 
Since $a\in\Lip(\R^+,\R^+)$, while $\mu(t)$ has compact support, the displacement of the support has bounded velocity $\xi[\mu(t)] = \iint_{\supp(\mu(t))}\|x-y\|^2a(\|x-y\|) d\mu_x(t) d\mu_y(t)<\infty$, so $X(\cdot)$ is differentiable almost everywhere.
Let $x(\cdot,x_0)$ be the particle trajectory of \eqref{eq:kinsystnocontrol} with $x(0,x_0)=x_0$.
Hence $x(t,x_0)\in\supp(\mu(t)$ for all $x_0\in\supp(\mu_0)$, and
$$
X(t) = \max\{\|x(t,x_0)-\bx(t)\|\; | \; x_0\in\supp(\mu_0)\}.
$$ 
We remark that the maximum is reached since $\mu_0$ is assumed to have compact support.
For every $t\geq 0$, let $K_t$ denote the set of points $x_0\in\supp(\mu_0)$ that attain the maximum $X(t)$, \textit{i.e.},
$$
K_t = \argmax\{\|x(t,x_0)-\bx(t)\|\; | \; x_0\in\supp(\mu_0)\}.
$$ 
By definition, $X(t)^2 = \|x(t,x_0)-\bx(t)\|^2$ for all $x_0\in K_t$. 
From the Danskin theorem (see \cite{Danskin67}),
$$
\DDt (X(t)^2) = \max\limits_{x_0\in K_t}\{ \DDt(\|x(t,x_0)-\bx(t)\|^2)\}.
$$
Then 
\begin{equation}
\begin{split}
\dot{X}(t) X(t) & = \max\limits_{x_0\in K_t}\left\{ (x(t,x_0)-\bx(t))\cdot \int a(\|x(t,x_0)-y\|)(y-x(t,x_0)) d\mu_y \right\} \\
& = \max\limits_{x_0\in K_t}\left\{\int_{\supp(\mu(t))} a(\|x(t,x_0)-y\|)(x(t,x_0)-\bx(t))\cdot (y-x(t,x_0)) d\mu_y \right\} .
\end{split}
\end{equation}
Since $x(t,x_0)\in S(\bx(t),X(t))$, by convexity, $(x(t,x_0)-\bx(t))\cdot (y-x(t,x_0)) \leq 0$ for all $y\in\supp(\mu(t))\subseteq B(\bx(t),X(t))$.
We supposed that 
$a(s)\geq C>0$. 
Hence 
\begin{equation*}
\begin{split}
\dot{X}(t) X(t) & \leq \max\limits_{x_0\in K_t}\left\{\int_{\supp(\mu(t)} C (x(t,x_0)-\bx(t))\cdot (y-x(t,x_0)) d\mu_y \right\} \\
& \leq \max\limits_{x_0\in K_t}\left\{-C\|x(t,x_0)-\bx(t)\|^2 \right\} = -C X^2(t).
\end{split}
\end{equation*}
and thus $ X(t) \leq X(0) \exp(-C t)$.
\end{proof}

\begin{rem}
In the statement of Theorem \ref{th:X}, the condition $s\mapsto a(s)$ bounded below by $C>0$ can be replaced with 
$s\mapsto a(s)$ non-increasing (see \cite{PRT15}), and the same conclusion holds: $X$ tends to zero exponentially.  
\end{rem}
With Theorems \ref{th:Vkinconvergence} and \ref{th:X}, we showed that under certain conditions on $a$, the system converges to consensus. We now examine under what conditions convergence to consensus can be avoided, and even further, under what conditions clustering can be avoided.

\subsection{Control of the kinetic dynamics}

We provide equivalent results to those of Section \ref{Sec:finite}, but in the case of the kinetic dynamics.
We adapt the concepts of \textit{black hole}, \textit{safety region}, \textit{basin of attraction} and \textit{collapse prevention} to the kinetic formulation.
Let $\mu_0(x)=\mu(0,x),$ and $\phi^t_{a,u,\omega}\#\mu_0$ denote the push-forward at time $t$ of $\mu_0$ by the flow of the kinetic dynamics \eqref{eq:kindym} with interaction potential $a$, and control $u\chi_\omega$.
\subsubsection{Black hole} As in the discrete case, we start by defining the \textit{black hole} region
$$
\mathcal{R}^M_\mathrm{BH} = \{\mu_0\in\P(\R^d)^n\; | \; \forall u, \; \omega, \; \exists T>0, \; \V(\phi^T_{a,u,\omega}\#\mu_0)=0 \}.
$$

We begin by pointing out the following:
\begin{lemma}\label{lemma:Vkinmin2}
For all $r>0$, $\iint_{\|x-y\|\geq r}\|x-y\| d\mu_x d\mu_y \leq \frac{1}{r}\V(\mu)$.
\end{lemma}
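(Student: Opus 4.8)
The plan is to mirror the proof of Lemma~\ref{lemma:Vkinmin}, exploiting the fact that the integrand $\|x-y\|^2$ defining $\V$ is everywhere nonnegative, so that restricting the domain of integration only decreases the integral. First I would discard the contribution from the near-diagonal region and write
\[
\V(\mu) = \iint \|x-y\|^2\, d\mu_x\, d\mu_y \;\geq\; \iint_{\|x-y\|\geq r} \|x-y\|^2\, d\mu_x\, d\mu_y.
\]

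Next, on the remaining region I would factor the integrand as $\|x-y\|^2 = \|x-y\|\cdot\|x-y\|$ and bound one of the two factors below by $r$, which is legitimate precisely because $\|x-y\|\geq r$ holds throughout the domain of integration. This gives
\[
\iint_{\|x-y\|\geq r} \|x-y\|^2\, d\mu_x\, d\mu_y \;\geq\; r \iint_{\|x-y\|\geq r} \|x-y\|\, d\mu_x\, d\mu_y.
\]
Chaining the two displayed inequalities and dividing by $r>0$ yields the claimed bound $\iint_{\|x-y\|\geq r}\|x-y\|\, d\mu_x\, d\mu_y \leq \tfrac{1}{r}\V(\mu)$.

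There is no genuine obstacle here: the only points requiring care are that the inequality $\|x-y\|\geq r$ is invoked only inside the restricted domain (never on the full space), and that $r>0$ makes the final division valid. Conceptually this lemma is the weighted refinement of Lemma~\ref{lemma:Vkinmin} carrying one extra power of $\|x-y\|$, and I expect it to serve later as the tail estimate on first-order pairwise distances needed to control the interaction term in the kinetic black-hole argument, just as Lemma~\ref{lemma:Vkinmin} controlled the zeroth-order mass.
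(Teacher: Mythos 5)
Your proof is correct and coincides with the paper's, which simply notes that the argument of Lemma~\ref{lemma:Vkinmin} carries over: restrict the integral defining $\V(\mu)$ to the region $\|x-y\|\geq r$, bound one factor of $\|x-y\|$ below by $r$ there, and divide by $r>0$. Your remark about its role as a tail estimate is also accurate, since the lemma is invoked exactly this way in the proof of Theorem~\ref{th:BHkin}.
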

\begin{proof}
The proof follows the same argument as that of Lemma \ref{lemma:Vkinmin}.
\end{proof}

\begin{theorem}\label{th:BHkin}
Suppose that
$\lim_{s\rightarrow 0} sa(s) = + \infty  $. Then for all $M>0$, there exists a \textbf{black hole region} $\mathcal{R}^M_\mathrm{BH}\neq \emptyset$.
More specifically, for all $\mu_0\in\mathcal{R}^M_\mathrm{BH}$, $\V(\phi^t_{a,u,\omega}\#\mu_0)$ tends to $0$ in finite time, and this for any control $u\in\UMkin$, $\omega\in\Omega_c$. 
\end{theorem}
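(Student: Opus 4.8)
The plan is to mirror the finite-dimensional black hole argument (Theorem \ref{th:BH}), but with one essential modification. In the discrete setting the diameter of the configuration is controlled by $\sqrt{V}$ (since $\max_{i,j}\|x_i-x_j\|\le \sqrt{2}N\sqrt{V}$), so proving $\dot V\le -c\sqrt V$ directly yields finite-time collapse. In the kinetic setting this comparison fails: a measure can have small variance yet large support (a little mass carried far away), and consequently controlling $\V$ alone only yields exponential decay, not extinction in finite time. I would therefore track the \emph{support size} $X(t)$ defined in \eqref{eq:X}, exactly as in Theorem \ref{th:X}, and show that near consensus the singularity of $a$ forces $X$ to vanish in finite time regardless of the control; the bound $\V(\mu(t,\cdot))\le (2X(t))^2$ then gives the stated conclusion for $\V$.

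Concretely, fix $A>4M$. Since $\lim_{s\to0}sa(s)=+\infty$, choose $\epsilon>0$ with $sa(s)\ge A$ for all $s\in(0,\epsilon)$, and define $\mathcal R^M_{\mathrm{BH}}$ to contain every $\mu_0\in\P_c(\R^d)$ with $X(0)<\epsilon/2$; this set is nonempty (it contains all Dirac masses and all measures supported in a ball of radius $<\epsilon/4$). Because $\supp(\mu(t))\subseteq B(\bx(t),X(t))$, all pairwise distances satisfy $\|x-y\|\le 2X(t)<\epsilon$ as long as $X(t)<\epsilon/2$, so the lower bound $a(\|x-y\|)\|x-y\|\ge A$ applies to every active pair. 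I would then differentiate $X^2$ via the Danskin theorem as in Theorem \ref{th:X}, being careful that under control the center of mass is no longer conserved: writing $\dot{\bx}=\int_\omega u\,d\mu_x$, one has $\|\dot{\bx}\|\le M$, and for $x_0$ in the maximizing set $K_t$, with $R:=x(t,x_0)-\bx(t)$ and $\|R\|=X$, $X\dot X=\max_{x_0\in K_t} R\cdot(\xi[\mu](x)+\chi_\omega u - \dot{\bx})$.

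The heart of the argument is the estimate on the interaction term. Writing $R\cdot(x-y)=X\|x-y\|\cos\theta_y$ with $\cos\theta_y\ge0$ (convexity, since $x$ lies on the sphere $S(\bx,X)$), I would first record the identity $\int\|x-y\|\cos\theta_y\,d\mu_y=X$, obtained from $\int R\cdot(x-y)\,d\mu_y=R\cdot(x-\bx)=X^2$. Combined with $\|x-y\|\le 2X$ this yields the key geometric bound $\int\cos\theta_y\,d\mu_y\ge \tfrac12$, which prevents the control-free pull from degenerating even when mass concentrates near $x$. Hence $R\cdot\xi[\mu](x)=-X\int a(\|x-y\|)\|x-y\|\cos\theta_y\,d\mu_y\le -AX\int\cos\theta_y\,d\mu_y\le -\tfrac{A}{2}X$, while the two control contributions are bounded by $R\cdot\chi_\omega u\le MX$ and $-R\cdot\dot{\bx}\le MX$. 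Adding these gives $\dot X\le -\tfrac{A}{2}+2M<0$, so $X$ reaches $0$ in finite time $T\le X(0)/(\tfrac{A}{2}-2M)$; since $X$ is decreasing the condition $X<\epsilon/2$ persists throughout, and $\V\le(2X)^2\to0$ in finite time.

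The main obstacle I anticipate is analytic rather than algebraic: justifying the Danskin differentiation of $X^2$ and the existence of the characteristic flow up to the collapse time, given that $a$ is singular at $0$ and $\mu$ may carry atoms. Near consensus all \emph{distinct} points remain at positive (though small) distance while $X>0$, so $a$ is evaluated at finite values and the flow is well defined; I would invoke the well-posedness and solution-extension framework recalled after \eqref{eq:kindym} (and Lemma \ref{lemma:prolong}) to make this rigorous, and note that the geometric estimate $\int\cos\theta_y\,d\mu_y\ge\frac12$ is the one genuinely new ingredient compared with the discrete proof.
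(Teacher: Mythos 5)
Your argument is correct in its essential mechanism, but it takes a genuinely different route from the paper, which stays entirely at the level of the variance. The paper sets $A=2M$, picks $r_0$ with $sa(s)\geq A$ for $s\leq r_0$, defines the black hole region by the single condition $\V(0)\leq r_0^2/4$, and deals with exactly the difficulty you identified (small variance does not force small support) by splitting the interaction integral at $\|x-y\|=r_0$ and absorbing the far-away mass via Lemma \ref{lemma:Vkinmin2}, arriving at $\dot\V\leq \big(M-2A+\tfrac{2A}{r_0}\sqrt{\V}\big)\sqrt{\V}\leq -M\sqrt{\V}$ while $\sqrt{\V}\leq r_0/2$. You instead track the support radius $X$ of \eqref{eq:X}, as in Theorem \ref{th:X}, and prove $\dot X\leq -(A/2-2M)<0$; your genuinely new ingredient, the bound $\int\cos\theta_y\,d\mu_y\geq \tfrac12$ at extremal points, is correct (it follows, as you say, from $\int R\cdot(x-y)\,d\mu_y=X^2$ together with $\|x-y\|\leq 2X$), and your accounting for the drifting barycenter with $\|\dot{\bx}\|\leq M$ is a necessary step that is easy to overlook. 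Your region $\{X(0)<\epsilon/2\}$ is smaller than the paper's variance ball, which is all the statement requires; interestingly, your skepticism about the pure variance route is partly vindicated by the paper's own last step, which needs $(M-2A)\iint\|x-y\|\,d\mu_x d\mu_y\leq (M-2A)\sqrt{\V}$ with $M-2A<0$, i.e. $\iint\|x-y\|\,d\mu_x d\mu_y\geq\sqrt{\V}$ --- the \emph{reverse} of the Cauchy--Schwarz inequality it invokes, which fails e.g. for two distant atoms of very unequal mass, precisely the small-variance, large-support configurations you flagged (and which, if $sa(s)$ is small at large $s$, the control can actually drive apart). Your small-support restriction closes off exactly these configurations.

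The one genuine gap, which you flag but do not fill, is regularity. The variance route only requires testing the weak formulation against $\|x-y\|^2$, whereas your route requires well-defined particle trajectories and Danskin differentiation of $X^2$; the paper justifies this machinery only under $a\in\Lip(\R^+,\R^+)$ (Theorems \ref{th:X}, \ref{th:safetykin}, \ref{th:collapsekin}), and here $\lim_{s\to 0}sa(s)=+\infty$ forces $a$ to be singular, so that $\xi[\mu]$ need not be Lipschitz --- nor even finite at points near which $\mu$ concentrates (take $d\mu\sim\|y-x\|^{-d+1/2}dy$ near $x$ and $a(s)\sim A/s^2$), even while $X>0$ and all \emph{distinct} support points are separated. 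Your appeal to Lemma \ref{lemma:prolong} does not repair this, since that lemma concerns the $N$-particle ODE, not the kinetic flow. So your proof is rigorous for, say, atomic measures up to the first collision and formal otherwise; this is the same level of rigor at which the paper's own computation operates, but it should be stated as such rather than attributed to the cited well-posedness framework.
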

\begin{proof}
Let $A=2M$. There exists $r_0>0$ such that $sa(s) \geq A$ for all $s\geq r_0$. Suppose that $\V(0)\leq \frac{r_0^2}{4}$.
\begin{equation*}
\begin{split}
\dot \V & \leq -2\iint \|x-y\|^2 a(\|x-y\|) d\mu_x d\mu_y + M \iint \|x-y\| \chi_\omega(x) d\mu_x d\mu_y \\
& \leq -2\iint_{\|x-y\|\leq r_0} \|x-y\|^2 a(\|x-y\|) d\mu_x d\mu_y + M \iint \|x-y\|  d\mu_x d\mu_y \\
& \leq -2A \iint_{\|x-y\|\leq r_0} \|x-y\| d\mu_x d\mu_y + M \iint \|x-y\|  d\mu_x d\mu_y  \\
& \leq -2A \iint  \|x-y\| d\mu_x d\mu_y +2 A  \iint_{\|x-y\| > r_0} \|x-y\| d\mu_x d\mu_y + M \iint \|x-y\|  d\mu_x d\mu_y  \\
& \leq (M-2A)\iint\|x-y\|d\mu_x d\mu_y + 2A\frac{\V}{r_0}
\end{split}
\end{equation*} 
where the last inequality is a consequence of Lemma \ref{lemma:Vkinmin2}.
Notice that 
$$
\iint \|x-y\| d\mu_x d\mu_y \leq \left( \iint \|x-y\|^2 d\mu_x d\mu_y\right)^{1/2}\left(\iint 1 d\mu_x d\mu_y\right)^{1/2} = \sqrt{\V}.
$$
Then we have 
$$
\dot \V \leq (M-2A+\frac{2A}{r_0}\sqrt{\V})\sqrt{\V}.
$$
While $\sqrt{\V}\leq \frac{r_0}{2}$, $\dot \V\leq -M\sqrt{\V}$. Hence $\V$ decreases which ensures that the condition  $\sqrt{\V}\leq \frac{r_0}{2}$ holds. In conclusion, $\V$ tends to $0$ in finite time.
\end{proof}


\subsubsection{Safety region}

The behavior of the interaction function at infinity determines the existence of either a \textit{safety region} or a \textit{basin of attraction}. For the discrete system, the \textit{safety region} was defined by bounding below the smallest pairwise distance $\min_{i\neq j}\|x_i-x_j\|$. 
In the kinetic case, we replace this condition by requiring that the population density stays split into distinct measures concentrated around points that are far enough from one another.
More precisely, when the interaction function $a(\cdot)$ decreases enough at infinity, we can prove the following.
\begin{theorem}\label{th:safetykin}
Suppose that $\lim_{s\rightarrow+\infty} s a(s) = 0$, and that $a\in\Lip(\R^+,\R^+)$.
Let $N\in\N$, $N\geq 2$, and $r>0$ small enough that $N |B(0,r)|\leq c$. Let $R>2r$ be large enough that for all $s\geq R-2r$, $sa(s)\leq\epsilon < M$. Let $\{x_1,\ldots, x_N\} \in (\R^d)^N$ be such that for all $i\neq j$, $\|x_i-x_j\|\geq R$.
If $\mu_0$ satisfies: 
\begin{equation}\label{eq:suppmu0}
\supp(\mu_0)\subset \bigcup_{i=1}^N B(x_i, r),
\end{equation}
then for $\omega:=\bigcup_{i=1}^N B(x_i, r)\subset\Omega_c$ and $u\in\UMkin$ defined by 
\begin{equation}\label{eq:contsuppmu0}
u(t,x) = 
\begin{cases}
-M\frac{x-x_i}{\|x-x_i\|} \text{ for all } x\in B(x_i,r)\setminus \{x_i\}, \text{ for all } i\in\{1,\ldots,N\} \\
0 \text{ otherwise,}
\end{cases}
\end{equation}
the solution to the controlled kinetic equation \eqref{eq:kindym} satisfies for all $t\geq 0$:
$$\supp(\mu(t,\cdot))\subset \bigcup_{i=1}^N B(x_i, r).$$ 
\end{theorem}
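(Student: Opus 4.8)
The plan is to reduce the statement to a confinement estimate along characteristics and then close it with a barrier argument. Since $a\in\Lip(\R^+,\R^+)$, as long as the support of $\mu(t,\cdot)$ stays bounded the velocity field $V(t,x):=\xi[\mu(t,\cdot)](x)+\chi_\omega(x)u(t,x)$ is integrable along trajectories, and $\supp(\mu(t,\cdot))$ is the image of $\supp(\mu_0)$ under the associated flow $\phi^t_{a,u,\omega}$ (the size condition $N|B(0,r)|\leq c$ guarantees $\omega\in\Omega_c$ and $\|u\|_{L^\infty}= M$ guarantees $u\in\UMkin$). Writing $x(t)=\phi^t_{a,u,\omega}(x_0)$ for a trajectory issued from $x_0\in B(x_i,r)$, it therefore suffices to show that $x(t)$ remains in $\overline{B(x_i,r)}$ for all $t\geq 0$, for which I would monitor the distance $\rho_i(t):=\|x(t)-x_i\|$ to the fixed center and prove it cannot exceed $r$.

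The heart of the proof is the sign of the outward radial velocity on the sphere $\|x-x_i\|=r$. Let $n:=(x-x_i)/r$ be the outward unit normal; I would compute
\begin{equation*}
\dot x\cdot n = u(t,x)\cdot n + \int_{B(x_i,r)} a(\|x-y\|)(y-x)\cdot n\, d\mu_y + \sum_{j\neq i}\int_{B(x_j,r)} a(\|x-y\|)(y-x)\cdot n\, d\mu_y,
\end{equation*}
using the standing inclusion $\supp(\mu(t,\cdot))\subset\bigcup_k B(x_k,r)$. By the explicit form \eqref{eq:contsuppmu0}, $u(t,x)\cdot n=-M$. For the same-ball term, since $\|y-x_i\|\leq r$ one has $(y-x)\cdot n=(y-x_i)\cdot n-r\leq 0$, so with $a\geq 0$ this contribution is nonpositive: this is the usual convexity mechanism pushing nearby agents inward. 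For each far-ball term $\|x-y\|\geq R-2r$, so $(y-x)\cdot n\leq\|x-y\|$ gives $a(\|x-y\|)(y-x)\cdot n\leq \|x-y\|a(\|x-y\|)\leq\epsilon$ by the choice of $R$; summing over $j\neq i$ and using that $\mu(t,\cdot)$ is a probability measure yields the bound $\epsilon$ for the whole far field. Altogether $\dot x\cdot n\leq \epsilon-M<0$ on $\|x-x_i\|=r$.

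With this strict inward bound in hand, I would conclude by a maximum-principle / continuation scheme in the spirit of the proof of Theorem \ref{th:X}. Define $X_i(t):=\max\{\|\phi^t_{a,u,\omega}(x_0)-x_i\|:x_0\in\supp(\mu_0)\cap B(x_i,r)\}$, the maximum being attained since $\mu_0$ is compactly supported, so that $X_i(0)\leq r$. On the set of times where $\supp(\mu(t,\cdot))\subset\bigcup_k\overline{B(x_k,r)}$, Danskin's theorem together with the estimate above gives $\dot X_i(t)\leq\epsilon-M<0$ whenever $X_i(t)=r$. Hence no $X_i$ can cross the level $r$ from below, the balls $B(x_k,r)$ stay pairwise disjoint (as $R>2r$), and the inclusion $\supp(\mu(t,\cdot))\subset\bigcup_k B(x_k,r)$ propagates to all $t\geq 0$.

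The main obstacle is the self-referential structure of the estimate: the splitting into near and far field, and in particular the lower bound $R-2r$ on inter-cluster distances, is valid only while the support is already contained in the balls, which is exactly what is being proved. This is resolved by setting $T^*:=\sup\{t:\supp(\mu(s,\cdot))\subset\bigcup_k\overline{B(x_k,r)}\ \forall s\leq t\}$ and ruling out $T^*<\infty$ through the strict inequality $\dot X_i<0$ at the boundary level $r$. A secondary technical point is the mild singularity of the feedback $u$ at the centers $x_i$; since it is bounded and the centers form a $\mu$-null set it does not affect the flow, and in any case the radial computation above is performed at $\|x-x_i\|=r>0$, where $u$ is smooth.
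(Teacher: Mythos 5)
Your proposal is correct and follows essentially the same route as the paper's proof: Danskin's theorem applied to $X_i(t):=\max\{\|x(t,x_0)-x_i\| : x_0\in B(x_i,r)\cap\supp(\mu_0)\}$, the convexity (nonpositivity) estimate for the same-ball term, the near/far splitting using $sa(s)\leq\epsilon$ for $s\geq R-2r$, and the radial feedback contributing $-M$, yielding $\dot X_i\leq \epsilon-M<0$. Your explicit continuation argument via $T^*$ in fact makes rigorous the bootstrap that the paper handles implicitly with its ``while $\supp(\mu(t,\cdot))\subset\bigcup_{i=1}^N B(x_i,r)$'' phrasing, so this is a sharpening rather than a deviation.
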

\begin{proof}
Suppose that $\mu_0\in\P_c(\R^d)$ and satisfies \eqref{eq:suppmu0}, 
with $N$, $r$, $R$ and $\{x_1,\ldots, x_N\} \in (\R^d)^N$ satisfying the conditions above.
As in the proof of Theorem \ref{th:X}, we denote by $x(t,x_0)$ the particle trajectory of \eqref{eq:kinsystnocontrol} with $x(0,x_0)=x_0$.
Let $X_i(t) := \max\{\|x(t,x_0)-x_i\|\; , \; x_0\in B(x_i,r)\cap \supp(\mu_0)\}$.
Let $K^i_t = \argmax\{\|x(t,x_0)-x_i\|\; | \; x_0\in B(x_i,r)\cap\supp(\mu_0)\}$.
Then for all $t\geq 0$, for all $x_0\in K^i_t$, $X_i(t)^2 = \| x(t,x_0)-x_i\|^2$.
From Danskin's theorem, 
$$
\DDt(X_i(t)^2) = \max_{x_0\in K_t^i} \{ \DDt \|x(t,x_0)-x_i\|^2\}.
$$
We have
\begin{equation*}
\begin{split}
\DDt  \|x(t,x_0)-x_i\|^2 = & 2(x(t,x_0)-x_i)\cdot \left(\int a(\|x(t,x_0)-y\|)(y-x(t,x_0)) d\mu_y + \chi_\omega u(t,x(t,x_0))\right) \\
 = & 2(x(t,x_0)-x_i)\cdot \big( \int_{B(x_i,r)} a(\|x(t,x_0)-y\|)(y-x(t,x_0)) d\mu_y \\ & + \int_{\R^d\setminus B(x_i,r)} a(\|x(t,x_0)-y\|)(y-x(t,x_0)) d\mu_y + \chi_\omega u(t,x(t,x_0))\big).
\end{split}
\end{equation*}
Notice that for all $y\in B(x_i,r)$, $(x(t,x_0)-x_i)\cdot(y-x(t,x_0))\leq 0$. 
On the other hand, while $\supp(\mu(t,\cdot))\subset \bigcup_{i=1}^N B(x_i, r)$, for all $y\in (\R^d\setminus B(x_i,r))\cap \supp(\mu(t,\cdot))$, $\|x(t,x_0)-y\|\geq R-2r$, so $a(\|x(t,x_0)-y\|)(y-x(t,x_0))\leq \epsilon$.
Then we can write:
$$
\DDt  \|x(t,x_0)-x_i\|^2 \leq 2 (\|x(t,x_0)-x_i\| \epsilon + (x(t,x_0)-x_i)\chi_\omega u(t,x(t,x_0)) ).
$$
We design the control $u$ by \eqref{eq:contsuppmu0}
and the control set by $\omega:=\bigcup_{i=1}^N B(x_i, r)\subset\Omega_c$.
With this control, 
$$
\DDt(X_i(t)^2) = 2 X_i(t) \dot X_i(t) \leq 2 ( X_i(t)\epsilon - M X_i(t) )
$$
from which we get: $\dot X_i(t) <0$. Hence with the designed control, $\mu$ satisfies: $\supp(\mu(t,\cdot))\subset \bigcup_{i=1}^N B(x_i, r)$ for all $t\geq 0$.
\end{proof}
\begin{rem}
Theorem \ref{th:safetykin} adapts the results obtained in the microscopic setting to the kinetic case, by ensuring that the population density stays confined to balls whose centers are far apart, which prevents consensus. However, this does not prevent the measure from converging to a clustering state, which can happen if the radii of the balls containing its support converge to zero.
\end{rem}
\subsubsection{Basin of attraction}
We showed that there exists a \textit{safety region} if $a$ decreases fast enough at infinity. On the other hand, when $\lim_{s\rightarrow +\infty} sa(s) = +\infty $, no control can prevent the convergence of $\mu$ to an attractive region that we name \textit{basin of attraction}. In the discrete case, the basin of attraction consists of all states in which at least one pairwise distance is small (see Section \ref{Sec:basin}). In the kinetic setting, the basin of attraction consists of measures with a large concentration around their center of mass \eqref{eq:basinkin}.
\begin{theorem}
Suppose that
$\lim_{s\rightarrow +\infty} sa(s) = +\infty $. Then there exists a diameter $d>0$, a constant $\delta\in (0,\frac{1}{2})$ and a time $T>0$ such that for any $\mu_0\in\P(\R^d)$, for any control $\chi_\omega u$,
\begin{equation}\label{eq:basinkin}
\iint_{\|x-y\|\leq d} d\mu_x(T) d\mu_y(T) \geq \delta.
\end{equation}
\end{theorem}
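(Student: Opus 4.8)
The plan is to adapt the discrete argument of Theorem~\ref{th:basin} to the kinetic variance $\V$, tracking the concentration quantity $P(t):=\iint_{\|x-y\|\le d}d\mu_x(t)d\mu_y(t)$ in place of the minimal pairwise distance $\min_{i\neq j}\|x_i-x_j\|$. The starting point is the variance inequality already established in the proof of Theorem~\ref{th:BHkin}: for any admissible control $\chi_\omega u$ with $\|u\|\le M$,
\[
\DDt\V(\mu(t,\cdot))\le -2\iint\|x-y\|^2 a(\|x-y\|)\,d\mu_x d\mu_y + M\iint\|x-y\|\,d\mu_x d\mu_y,
\]
where the dissipative interaction term has been isolated and the control contribution bounded by $M\iint\|x-y\|\,d\mu_x d\mu_y$ (discarding $\chi_\omega\le 1$). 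Fixing $\delta\in(0,\tfrac12)$ and setting $A:=M$, the hypothesis $\lim_{s\to+\infty}sa(s)=+\infty$ furnishes a $d>0$ such that $sa(s)\ge A$ for every $s\ge d$; this $d$ will be the claimed diameter.

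Next I would argue by contradiction, assuming $P(t)<\delta$ throughout a time interval $[0,T]$. On the far set $\{\|x-y\|\ge d\}$ one has $\|x-y\|^2 a(\|x-y\|)=\|x-y\|\,\bigl(\|x-y\|a(\|x-y\|)\bigr)\ge A\|x-y\|$, so the dissipative term is bounded below by $A\iint_{\|x-y\|\ge d}\|x-y\|\,d\mu_x d\mu_y$. Splitting the control term into its near and far parts, and using the elementary estimates $\iint_{\|x-y\|<d}\|x-y\|\,d\mu_x d\mu_y\le d\,P<d\delta$ together with $\iint_{\|x-y\|\ge d}\|x-y\|\,d\mu_x d\mu_y\ge d\,(1-P)>d(1-\delta)$, one obtains
\[
\DDt\V\le (M-2A)\iint_{\|x-y\|\ge d}\|x-y\|\,d\mu_x d\mu_y + M\iint_{\|x-y\|<d}\|x-y\|\,d\mu_x d\mu_y \le (M-2A)\,d(1-\delta)+M d\delta,
\]
where the first inequality combines $MI_{\ge}$ with $-2AI_{\ge}$, and the second uses $M-2A<0$ against the lower bound on the far integral. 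With $A=M$ this collapses to $\DDt\V\le -Md(1-2\delta)=:-C_0<0$, a strictly negative drift that is \emph{independent} of the current value of $\V$ and valid for as long as $P<\delta$.

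Finally, since $\V\ge 0$ at all times, the constant dissipation rate $-C_0$ cannot persist indefinitely: the resulting bound $\V(t)\le \V(\mu_0)-C_0 t$ would force $\V$ negative once $t>\V(\mu_0)/C_0$. Hence there must exist some $T\le \V(\mu_0)/C_0$ at which $P(T)\ge\delta$, which is precisely \eqref{eq:basinkin} with the $d$ and $\delta$ fixed above. I expect the delicate point to be the quantifier on $T$: the drift $-C_0$ does not see how spread out $\mu_0$ is, but the elapsed time $\V(\mu_0)/C_0$ does, so the argument naturally produces, for each fixed $\mu_0\in\P_c(\R^d)$, a finite entrance time $T=T(\mu_0)$ --- mirroring the discrete Theorem~\ref{th:basin}, where $T$ likewise depends on the initial configuration. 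A genuinely uniform $T$ would require an a priori bound on $\V(\mu_0)$, so restricting to initial data of bounded variance (equivalently, of bounded support diameter) makes $T$ uniform. A secondary point to verify is the justification of differentiating $\V$ along the flow: since the bound above only invokes values of $a$ at distances $\ge d$, the possible blow-up of $a$ near $0$ is irrelevant here, and the computation is legitimate exactly as in the proof of Theorem~\ref{th:Vkinconvergence}.
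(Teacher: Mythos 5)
Your proof is correct and follows essentially the same route as the paper's: the same choice of $d$ with $sa(s)\geq M$ for $s\geq d$, the same near/far splitting of the variance dissipation under the contradiction hypothesis $P(t)<\delta$, yielding the same uniform drift $\dot{\V}\leq Md(2\delta-1)<0$, and the same conclusion from $\V\geq 0$. Your closing remark on the quantifiers is apt but does not separate you from the paper — its own proof likewise only produces an entrance time $T$ depending on $\V(\mu_0)$, so the uniform $T$ asserted in the statement implicitly requires initial data of bounded variance, exactly as you note.
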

\begin{proof}
Let $d>0$ such that for all $s\geq d$, $sa(s)\geq M$.
We reason by contradiction. Let $\delta\in (0,\frac{1}{2})$ and suppose that for all $t>0$, 
\begin{equation}\label{eq:hypproofbasin}
\iint_{\|x-y\|\leq d} d\mu_x(t) d\mu_y(t) < \delta.
\end{equation}
The derivative of the variance was computed earlier and can be written as:
\begin{equation*}
\begin{split}
\dot \V & \leq -2\iint \|x-y\|^2 a(\|x-y\|) d\mu_x d\mu_y + M \iint \|x-y\| \chi_\omega(x) d\mu_x d\mu_y \\
& \leq -2\iint_{\|x-y\|> d} \|x-y\|^2 a(\|x-y\|) d\mu_x d\mu_y + M \iint \|x-y\| d\mu_x d\mu_y \\
& \leq -2 M \iint_{\|x-y\|> d} \|x-y\| d\mu_x d\mu_y + M \iint_{\|x-y\|\geq d} \|x-y\| d\mu_x d\mu_y
+ M \iint_{\|x-y\|< d} \|x-y\| d\mu_x d\mu_y \\
& \leq - M \iint_{\|x-y\|> d} \|x-y\|d\mu_x d\mu_y + M  \iint_{\|x-y\|\leq d}  \|x-y\| d\mu_x d\mu_y \\
& \leq - M d\iint_{\|x-y\|> d} d\mu_x d\mu_y + M d  \iint_{\|x-y\|\leq d} d\mu_x d\mu_y .
\end{split}
\end{equation*} 
From \eqref{eq:hypproofbasin}, we also have $- \iint_{\|x-y\|> d} d\mu_x d\mu_y <\delta -1 $. We obtain: 
$$
\dot \V \leq Md(2\delta-1) <0
$$ 
since $\delta<\frac{1}{2}$. Hence $\V$ converges to $0$ in finite time, and the system reaches consensus, so there exists $T>0$ such that $\iint_{\|x-y\|\leq d} d\mu_x(t) d\mu_y(t) = 1.$ This contradicts the hypothesis \eqref{eq:hypproofbasin}. 
\end{proof}

\subsubsection{Collapse prevention}
Lastly, if the interaction potential is not too big near the origin, we aim to prove that, as in the case of the discrete dynamics, there exists a control keeping the system away from consensus. 

We first show that the control $u$ constructed in Proposition \ref{prop:contVkin} to maximize the time derivative of the variance can maintain the size of the support of $\mu(t)$ above a certain size.

\begin{theorem}
Suppose that  $a\in\Lip(\R^+,\R^+)$, which implies that $\lim_{s\rightarrow 0} sa(s) = 0$. Let $\mu$ be the solution of \eqref{eq:kindym} with control $u$ given by \eqref{eq:contVkin}.
Let $X$ represent the size of $\supp(\mu)$ as defined by \eqref{eq:X}.
Then there exists $\eta>0$ and $\tau>0$ such that for all $t\geq \tau$, $X(t)\geq \eta$.
\end{theorem}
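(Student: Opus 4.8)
The plan is to reduce everything to the variance $\V$, which is translation invariant and for which Proposition \ref{prop:contVkin} already supplies the time derivative, and then to exploit the elementary bound $X(t)\geq\sqrt{\V(\mu(t))/2}$: since $\supp(\mu(t))\subseteq B(\bx(t),X(t))$ one has $\V=2\int\|x-\bx\|^2\,d\mu_x\leq 2X^2$, so any lower bound on $\V$ immediately yields one on $X$. The hypothesis $a\in\Lip(\R^+,\R^+)$ together with $a(0)=0$ gives the quantitative form of $\lim_{s\to 0}sa(s)=0$, namely $a(s)\leq Ls$ with $L:=\Lip(a)$, which is exactly what makes the interaction negligible compared with the control near consensus.

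First I would isolate the regime in which the control acts on the whole population. Let $r_c>0$ be defined by $|B(0,r_c)|=c$ and set $\eta_0:=\min\{r_c,\sqrt{M/(4L)}\}$. While $X(t)\leq r_c$ the support lies in a ball of Lebesgue measure $\leq c$, so the maximizer $\omega$ in \eqref{eq:contVkin} can be taken to contain $\supp(\mu(t))$ and the control field equals $M(x-\bx)/\|x-\bx\|$ on all of the support. Combining the derivative from Proposition \ref{prop:contVkin} with the uncontrolled computation of Theorem \ref{th:Vkinconvergence},
\[
\DDt\V=-2\iint\|x-y\|^2a(\|x-y\|)\,d\mu_xd\mu_y+4M\int\|x-\bx\|\,d\mu_x.
\]
Here I would bound the two terms separately: from $a(s)\leq Ls$ and $\|x-y\|\leq 2X$ the dissipative term is at least $-4LX\,\V$, while from $\int\|x-\bx\|^2d\mu_x\leq X\int\|x-\bx\|d\mu_x$ the control term is at least $2M\V/X$. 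Hence for $X\leq\eta_0$,
\[
\DDt\V\ \geq\ \V\Big(\tfrac{2M}{X}-4LX\Big)\ \geq\ 2\sqrt{LM}\,\V\ =:\ c_0\,\V\ >\ 0 .
\]

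This yields the dichotomy on which the conclusion rests: at every time either $X(t)\geq\eta_0$, in which case the desired lower bound holds at that instant, or $X(t)<\eta_0$, in which case $\V$ grows at exponential rate $c_0$ while simultaneously being capped by $\V\leq 2X^2<2\eta_0^2$. A support that stayed below $\eta_0$ on a time interval would force $\V$ past the cap $2\eta_0^2$, a contradiction; hence $X$ is driven back up to $\eta_0$ in finite time $\tau$, and thereafter each downward crossing of $\eta_0$ reignites the variance growth that reflects $X$ upward, so that $X(t)\geq\eta$ for all $t\geq\tau$, with $\eta$ determined by $\eta_0$ and the value of $\V$ at those crossings.

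The delicate point — and the step I expect to be the main obstacle — is precisely this last conversion of a lower bound on $\V$ into a \emph{uniform-in-time} lower bound on $X$. The two quantities are not equivalent: $\V$ can be small while $X$ is not (a thin outlier carrying little mass), and the naive Danskin estimate for the support, carried out as in the proof of Theorem \ref{th:X}, only gives $\DDt(X^2)\geq -8LX^3$ because the control-induced drift of the center of mass $\dot\bx=M\int (x-\bx)/\|x-\bx\|\,d\mu_x$ can, in the worst alignment, cancel the outward push on the extreme point. One must therefore argue that such outliers are transient: the contractive interaction pulls an isolated far particle back toward the bulk, after which $\V\simeq 2X^2$ and the bound $\DDt\V\geq c_0\V$ forces $X$ itself to increase, so that $\V$ at subsequent crossings of $\eta_0$ is bounded below by a constant of order $\eta_0^2$. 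This mirrors the discrete collapse-prevention analysis of Theorem \ref{th:consensusprevention}, where the same phenomenon is handled entirely at the level of the variance.
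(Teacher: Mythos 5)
Your variance computations are all correct, and your choice to work with $\V$ has a real structural advantage you correctly identify: $\V$ is translation invariant, so its derivative is exact, whereas the paper's own proof differentiates $\|x(t,x_0)-\bx(t)\|^2$ via Danskin and silently drops the $\dot\bx$ term, which for the controlled dynamics equals $M\int_\omega \frac{x-\bx}{\|x-\bx\|}\,d\mu_x$ and need not vanish --- exactly the drift you flag. Nevertheless, your proposal does not prove the theorem, and the gap is the one you yourself name in the final paragraph. Your two established facts --- $X\geq\sqrt{\V/2}$, and $\DDt\V\geq c_0\V$ whenever $X\leq\eta_0$ --- show only that $X$ cannot remain below $\eta_0$ on an unbounded time interval, i.e. $\limsup_{t\to\infty}X(t)\geq\eta_0$; they do not yield $\inf_{t\geq\tau}X(t)\geq\eta$. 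Quantitatively: at a downward crossing of $\eta_0$ the variance can be arbitrarily small (a thin outlier of mass $\delta$ gives $\V\approx 2\delta X^2$), the exponential growth then starts from that arbitrarily small value, and during the excursion your only lower bound is $X\geq\sqrt{\V/2}$; since $\dot X\geq -4LX^2$ merely limits the decay rate, the minimum of $X$ over an excursion tends to $0$ as $\V$ at entry tends to $0$. Nothing in your argument bounds $\V$ at crossing times from below --- between excursions, where $X>\eta_0$ and the support may not be covered by $\omega$, the dissipative term $-2\iint\|x-y\|^2a(\|x-y\|)\,d\mu_x d\mu_y$ can shrink $\V$ with no time-uniform floor --- and the closing assertion that ``outliers are transient, after which $\V\simeq 2X^2$'' is not an argument but an unproven claim, and a dubious one: the control field itself pushes the farthest mass radially outward, so thin-outlier profiles with $\V\ll X^2$ can persist indefinitely.

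The paper closes the proof by estimating $X$ itself, pointwise along particle trajectories, rather than through any integral functional. It takes $\omega(t)=B(\bx(t),R)$ with $|B(0,R)|\leq c$ and chooses $r$ so that $sa(s)\leq\epsilon<M$ for $s\leq r$; then while $X\leq\min(\frac{r}{2},R)$ all pairwise distances in the support are at most $r$, so the interaction field has norm at most $\epsilon$ while the control pushes the extremal point outward at speed $M$, and Danskin gives $\dot X\geq M-\epsilon>0$. This is a genuine barrier: once $X$ exceeds $\frac{1}{2}\min(\frac{r}{2},R)$ it can never re-cross it, which is precisely the uniform-in-time mechanism your variance dichotomy lacks. (Your center-of-mass objection does mean the paper's inequality, as written, needs the extra term $-(x^*-\bx)\cdot\dot\bx$ to be handled, e.g. by exploiting $\bigl\|\int \frac{y-\bx}{\|y-\bx\|}\,d\mu_y\bigr\|<1$ for non-Dirac $\mu$ with zero-mean displacement; but that is a repair to the support estimate, and iterating your variance dichotomy cannot substitute for it.)
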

\begin{proof}
From the proof of \ref{th:X}, the evolution of $X^2$ is given by:
$$
\DDt (X(t)^2) = 2\max_{x_0\in K^X} \{ (x(t,x_0)-\bx(t))\cdot \int_{\supp(\mu(t))} a(\|x(t,x_0)-y\|)(y-x(t,x_0)) d\mu(t,y) + \chi_{w(t)} u(t,x(t,x_0)) )\}.
$$
Let $\epsilon<M$, and let $r>0$ be such that for all $s\leq r$, $sa(s)\leq \epsilon$. 
Let $R(c)$ be such that the volume of a ball of radius $R$ is less than $c$. Then we set $w(t)=B(\bx(t),R)\in\Omega_c$.
Suppose $X(t)\leq \min(\frac{r}{2},R)$. Then $\|x-y\|\leq r$ for all $(x,y)\in\supp(\mu)^2$.
Then 
$$
X(t)\dot{X}(t) \geq -\epsilon X(t) + (x(t,x_0)-\bx(t))\cdot \chi_{w(t)} u(t,x(t,x_0)) = (M-\epsilon) X(t).
$$
Hence $\dot X(t)\geq M-\epsilon >0$, so while $X\leq \min(\frac{r}{2},R)$, $X$ increases. Consequently there exists $\tau$ such that for all $t\geq \tau$, $X(t)\geq \frac{1}{2}\min(\frac{r}{2},R)$.
\end{proof}

We proved that if $\lim_{s\rightarrow 0} sa(s) = 0$, there exists a control that keeps the support of $\mu$ from being too small. This implies that with this control, consensus cannot be reached in finite time. However, $\mu$ could still converge to a Dirac mass asymptotically.

In the finite-dimensional system, if $\lim_{s\rightarrow 0} sa(s) = 0$, we can find a control that maintains pairwise distances $\|x_i-x_j\|$ above a certain positive threshold (see Theorem \ref{th:clusteringprevention}).
Similarly, in the kinetic system, we will prove that if initially the diameter of the support of $\mu_0$ is small enough, and $\mu_0$ is contained in non-overlapping balls of small radii, then there exists a control that keeps $\mu_0$ in its initial support, preventing consensus.

\begin{theorem}\label{th:collapsekin}
Suppose that $a\in\Lip(\R^+,\R^+)$, which implies that $\lim_{s\rightarrow 0} s a(s) = 0$.
Let $N\in\N$, $N\geq 2$, and $r>0$ small enough that $N |B(0,r)|\leq c$. Let $R>2r$ be small enough that for all $s\leq R$, $a(s)(s)\leq\epsilon < M$. Let $\{x_1,\ldots, x_N\} \in (\R^d)^N$ be such that for all $i\neq j$, $2r\leq \|x_i-x_j\|\leq \frac{R}{2}$.
If $\mu_0$ satisfies: 
\begin{equation}\label{eq:suppmu0collapse}
\supp(\mu_0)\subset \bigcup_{i=1}^N B(x_i, r),
\end{equation}
then for $\omega:=\bigcup_{i=1}^N B(x_i, r)\subset\Omega_c$ and $u\in\UMkin$ defined by
\begin{equation}\label{eq:contsuppmu0collapse}
u(t,x) = 
\begin{cases}
-M\frac{x-x_i}{\|x-x_i\|} \text{ for all } x\in B(x_i,r)\setminus \{x_i\}, \text{ for all } i\in\{1,\ldots,N\} \\
0 \text{ otherwise,}
\end{cases}
\end{equation} 
the solution to the controlled kinetic equation \eqref{eq:kindym} satisfies for all $t\geq 0$:
$$\supp(\mu(t,\cdot))\subset \bigcup_{i=1}^N B(x_i, r).$$ 
\end{theorem}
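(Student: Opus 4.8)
The plan is to mirror the proof of Theorem~\ref{th:safetykin} almost verbatim, the only essential change being that the balls $B(x_i,r)$ are now close together rather than far apart, so the smallness of the cross-ball interaction comes from the \emph{near} behaviour of $a$ (small pairwise distances, $sa(s)\le\epsilon$ for $s\le R$) instead of from its decay at infinity. First I would record that the prescribed control is admissible: since $\|x_i-x_j\|\ge 2r$ the balls $B(x_i,r)$ are pairwise disjoint, whence $|\omega|=N|B(0,r)|\le c$, so $\omega\in\Omega_c$, and $\|u\|_{L^\infty}=M$, so $u\in\UMkin$. Because $a\in\Lip(\R^+,\R^+)$ the velocity field has no singularity and the characteristic flow $x(t,x_0)$ of \eqref{eq:kindym} is well defined, with $\supp(\mu(t,\cdot))$ the pushforward of $\supp(\mu_0)$, exactly as set up in the proof of Theorem~\ref{th:X}.

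Next, for each $i\in\{1,\ldots,N\}$ I would track the spread of the $i$-th clump by setting $X_i(t):=\max\{\|x(t,x_0)-x_i\|\,:\,x_0\in B(x_i,r)\cap\supp(\mu_0)\}$ with argmax set $K_t^i$, as in Theorem~\ref{th:safetykin}. The claim $\supp(\mu(t,\cdot))\subset\bigcup_i B(x_i,r)$ is equivalent to $X_i(t)\le r$ for every $i$, and the heart of the argument is a continuation (bootstrap) estimate: on any time interval on which confinement holds, all pairwise distances inside $\supp(\mu(t,\cdot))$ are at most $R$ (points in one ball are within $2r\le R$, and points in different balls are within $\|x_i-x_j\|+2r\le R/2+2r\le R$), so that $a(\|p-q\|)\|p-q\|\le\epsilon$ for every pair $p,q$ in the support.

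Then, applying Danskin's theorem \cite{Danskin67} to $X_i^2$ and differentiating $\|x(t,x_0)-x_i\|^2$ along the flow, for a maximizer $x_0\in K_t^i$ I would split the velocity at the boundary point $x(t,x_0)$ (which lies on the sphere $S(x_i,X_i(t))$) into three pieces. The self-interaction from within $B(x_i,r)$ contributes a nonpositive term, because every support point of the $i$-th clump lies in $\overline{B(x_i,X_i(t))}$ and hence $(x(t,x_0)-x_i)\cdot(y-x(t,x_0))\le 0$; the interaction from the remaining balls is bounded in absolute value by $\epsilon X_i(t)$ using the near-regime estimate above together with total mass $\le 1$; and the control contributes $(x(t,x_0)-x_i)\cdot u=-M\,X_i(t)$. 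Collecting these gives $X_i(t)\dot X_i(t)\le(\epsilon-M)X_i(t)$, so $\dot X_i(t)\le\epsilon-M<0$ whenever $X_i(t)>0$ and confinement holds. Since $X_i(0)\le r$, the strict negativity of $\dot X_i$ at the boundary value $X_i=r$ forbids $X_i$ from ever crossing $r$, yielding $\supp(\mu(t,\cdot))\subset\bigcup_i B(x_i,r)$.

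The one delicate point—the main obstacle—is the continuation step itself: the estimate $\dot X_i<0$ is only available \emph{while} the support remains in $\bigcup_i B(x_i,r)$ (this is what guarantees all distances $\le R$ and hence the $\epsilon$-bound on the cross-ball term), so one must argue that the set of times on which confinement holds is both open and closed in $[0,\infty)$. This is routine given the Lipschitz dependence of the flow, but it must be stated explicitly. A minor quantitative check also enters here: the inter-ball bound $R/2+2r\le R$ requires $R\ge 4r$, slightly stronger than the displayed $R>2r$, so I would either assume $r\le R/4$ (consistent with ``$r$ small'') or carry the harmless constant through. Finally, as in the remark following Theorem~\ref{th:safetykin}, I note that this control keeps the $N$ clumps separated and thereby prevents collapse to a single consensus point, though it does not by itself forbid each individual clump from concentrating.
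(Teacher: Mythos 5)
Your proposal is correct and follows essentially the same route as the paper's own proof: the same quantities $X_i(t)$ and $K_t^i$, Danskin's theorem, the decomposition of the velocity at a maximizer, the bound $a(\|x-y\|)\|x-y\|\le\epsilon$ while confinement holds, and the resulting estimate $\dot X_i\le\epsilon-M<0$ closing the bootstrap (your explicit treatment of the continuation step and of the admissibility of $(\omega,u)$ merely makes precise what the paper leaves implicit). Note that your worry about needing $R\ge 4r$ is already resolved by the hypotheses: the chained condition $2r\le\|x_i-x_j\|\le\frac{R}{2}$ forces $2r\le\frac{R}{2}$, hence $R\ge 4r$, so the inter-ball bound $\frac{R}{2}+2r\le R$ holds without any extra assumption.
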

\begin{proof}
The proof follows the same argument as that of Theorem \ref{th:safetykin}.
Suppose \eqref{eq:suppmu0collapse} 
with $N$, $r$, $R$ and $\{x_1,\ldots, x_N\} \in (\R^d)^N$ satisfying the conditions listed in the Theorem.
As in the proofs of Theorems \ref{th:X} and \ref{th:safetykin}, we denote by $x(t,x_0)$ the particle trajectory of \eqref{eq:kinsystnocontrol} with $x(0,x_0)=x_0$.
Let $X_i(t) := \max\{\|x(t,x_0)-x_i\|\; , \; x_0\in B(x_i,r)\cap \supp(\mu_0)\}$.
Let $K^i_t = \argmax\{\|x(t,x_0)-x_i\|\; | \; x_0\in B(x_i,r)\cap\supp(\mu_0)\}$.
Then for all $t\geq 0$, for all $x_0\in K^i_t$, $X_i(t)^2 = \| x(t,x_0)-x_i\|^2$.
We have
\begin{equation*}
\begin{split}
\DDt  \|x(t,x_0)-x_i\|^2 = & 2(x(t,x_0)-x_i)\cdot \left(\int_{\supp(\mu(t,\cdot))} a(\|x(t,x_0)-y\|)(y-x(t,x_0)) d\mu_y + \chi_\omega u(t,x(t,x_0))\right) .
\end{split}
\end{equation*}
While $\supp(\mu(t,\cdot))\subset \bigcup_{i=1}^N B(x_i, r)$, for all $(x,y)\in\supp(\mu(t,\cdot))^2$, $\|x-y\|\leq R$, so $a(\|x-y\|)(y-x)\leq \epsilon$.
Then 
$$
\DDt  \|x(t,x_0)-x_i\|^2 \leq 2 (\|x(t,x_0)-x_i\| \epsilon + (x(t,x_0)-x_i)\chi_\omega u(t,x(t,x_0)) ).
$$
As for Theorem \ref{th:safetykin}, we design the control $u$ by \eqref{eq:contsuppmu0collapse}
and the control set by $\omega:=\bigcup_{i=1}^N B(x_i, r)\subset\Omega_c$.
With this control, 
$$
\DDt(X_i(t)^2) = 2 X_i(t) \dot X_i(t) \leq 2 ( \epsilon - M ) X_i(t) <0.
$$
With the designed control, $\mu$ satisfies: $\supp(\mu(t,\cdot))\subset \bigcup_{i=1}^N B(x_i, r)$ for all $t\geq 0$, and consensus is avoided.
\end{proof}

Theorem \ref{th:collapsekin} is a direct adaptation to the kinetic setting of Theorem \ref{th:consensusprevention}, its discrete counterpart. 
Notice that in the discrete case, avoidance of clustering comes as a direct consequence of Theorem \ref{th:consensusprevention}. In the kinetic case, Theorem \ref{th:collapsekin} ensures that the measure stays confined to balls that are disjoint from one another. This only ensures the prevention of consensus, due to the fact that we define kinetic clustering as the presence of one or more Dirac masses (see Definitions \ref{def:consensuskin} and \ref{def:clusterkin}). 
Notice however that here, the assumptions on $a(\cdot)$ are stronger than in the microscopic case: in order to define particle trajectories, we require $a\in\Lip(\R^+,\R^+)$, which would prevent a finite-time blow-up of the solution.

\section{Simulations}\label{Sec:simu}

We illustrate the results proven in the previous sections with numerical simulations, focusing on the finite dimensional model.

\subsection{Black hole and safety region}
An interesting consequence of the results of Sections \ref{Sec:BlackHole} and \ref{Sec:Safety} is the possible coexistence of two regions of the $Nd$-dimensional space of initial configurations of a \textit{black hole} and a \textit{safety region}. We define the \textit{black hole} as the set of initial conditions for which the system tends to the clustering set in finite time. The \textit{safety region} indicates the set of initial conditions for which there exists a control keeping the system away from the clustering set. 

As an illustration of Sections \ref{Sec:BlackHole} and \ref{Sec:Safety}, we consider the interaction function given by: 
$a:s\mapsto \frac{1}{s^2}$. Then indeed $sa(s) = \frac{1}{s}$, so that $\lim_{s\rightarrow 0} sa(s) = +\infty$ and $\lim_{s\rightarrow+\infty} sa(s) = 0$.
This implies the existence of a \textit{black hole} and of a \textit{safety region}. 
We
 study the geometry of these regions. 
Let $g:s\rightarrow - \frac{1}{s}$ define the generalized entropy $W_g$. 
Then from the proof of Theorem \ref{th:BH}, we know that if $V(0)\leq \frac{\epsilon^2}{\sqrt{2}N}$, then the system converges to consensus in finite time, where $\epsilon$ is such that for any $A>M$, if $s\leq \epsilon$, then $sa(s)\geq A$. Let $\delta>0$ arbitrarily close to 0. Then $\epsilon = \frac{1}{M+\delta}$ satisfies the condition. Hence the Black Hole region $\mathcal{R}^M_{\mathrm{BH}}$ satisfies:
$$
 \{ x\in\RR^{dN}\; | \; \sum\limits_{i<j} \|x_i-x_j\|^2 < \frac{1}{M^2} \} \subseteq \mathcal{R}^M_{\mathrm{BH}}.
$$

Similarly, from the proof of Theorem \ref{th:safety1}, we know that if $W_g(0)\geq \frac{1}{2N^2}(g(\mu^2)+m(\frac{N(N-1)}{2}-1))$, then with the proper control, the system stays bounded away from the clustering set, where $\mu$ is such that for any $\epsilon<\frac{M}{N}$, if $s\geq\mu$, then $sa(s)\geq\epsilon$. Then for any $\delta>0$ arbitrarily close to 0, let $\epsilon = \frac{M}{N}-\delta$. With the function $a:s\mapsto \frac{1}{s^2}$, the condition above is satisfied for $\mu = \frac{1}{\epsilon}$. Hence with the choice $g:s\mapsto -\frac{1}{s}$, we have $m=0$ and
the safety region $\mathcal{R}^M_{\mathrm{S}}$ satisfies: 
$$
 \{ x\in\RR^{dN}\; | \; \sum\limits_{i<j} \frac{1}{\|x_i-x_j\|^2} < \frac{M^2}{N^2} \} \subseteq \mathcal{R}^M_{\mathrm{S}}.
$$

\begin{figure}[H]
\centering
\includegraphics[width=0.4\textwidth]{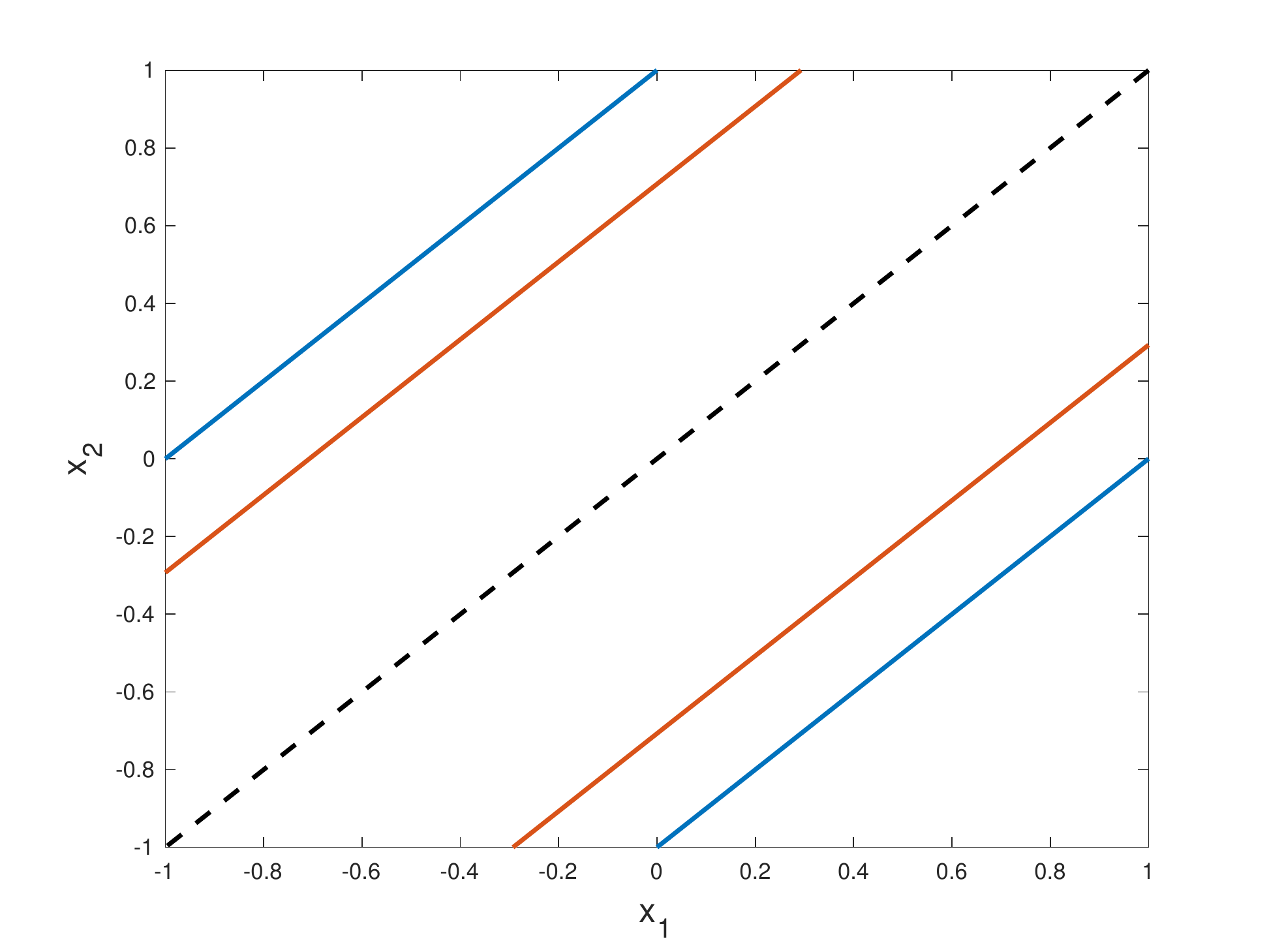}
\includegraphics[width=0.4\textwidth]{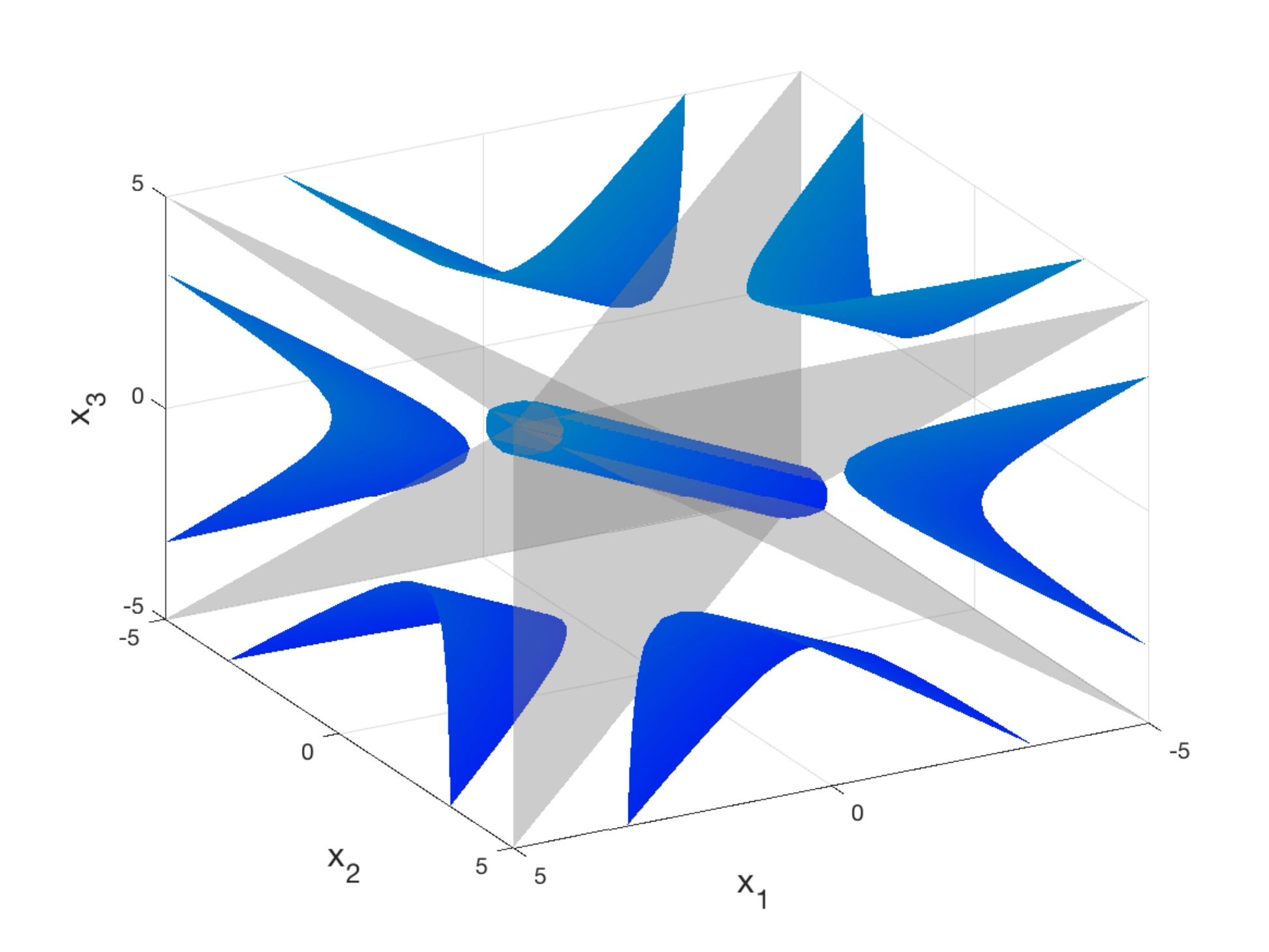}
\caption{Partition of the state space into the\textit{ black hole} region and the \textit{safety region} for $M=1$, $a:s\mapsto\frac{1}{s^2}$ and $g:s\mapsto-\frac{1}{s}$. Left: with $(N,d)=(2,1)$, the region enclosed by the red lines is a subset of $\mathcal{R}^M_{\mathrm{BH}}$ and the region located outside the blue lines is a subset of $\mathcal{R}^M_{\mathrm{S}}$. The dotted line represents the consensus manifold.
Right: with $(N,d)=(3,1)$, the region inside the central cylinder is a subset of $\mathcal{R}^M_{\mathrm{BH}}$ and the region outside the hyperbola branches are a subset of $\mathcal{R}^M_{\mathrm{S}}$. The grey planes represent the clustering set, and their intersection is the consensus manifold.}
\label{fig:BH_SZ}
\end{figure}

%

As an illustration of the results proven in Sections \ref{Sec:BlackHole} and \ref{Sec:Safety}, we provide an example of an interaction function allowing for the existence of both a \textit{black hole} and a \textit{safety region}. 
Let $a:s\mapsto\frac{1}{s^2}$. Then $\lim_{s\rightarrow 0} sa(s) = +\infty$ and $\lim_{s\rightarrow +\infty} sa(s) = 0$, which means that there exist a \textit{black hole} and a \textit{safety region}. 
In the following, we will use the generalized entropy $W_g$ defined with $g:s\mapsto -\frac{1}{s}$.
\begin{itemize}
\item From the proof of Theorem \ref{th:BH}, $x_0\in\mathcal{R}^M_\mathrm{BH}$ if $ V(0) < \frac{1}{2N^2M^2}$, \textit{i.e.}, if 
 $M < \frac{1}{N\sqrt{2V(0)}}.$
\item From the proof of Theorem \ref{th:safety1}, $x_0\in\mathcal{R}^M_\mathrm{S}$ if $ W_g(0) > -\frac{M^2}{2N^4}$, \textit{i.e.}, if 
 $M > N^2\sqrt{-2W_g(0)}. $
\end{itemize}

Figures \ref{fig:BH_10} 	and \ref{fig:SZ_10} illustrate these results with $N=10$. The initial positions of the agents were distributed randomly and gave $W_g=-0.207$.

\begin{figure}[H]
\centering
\includegraphics[width=0.4\textwidth]{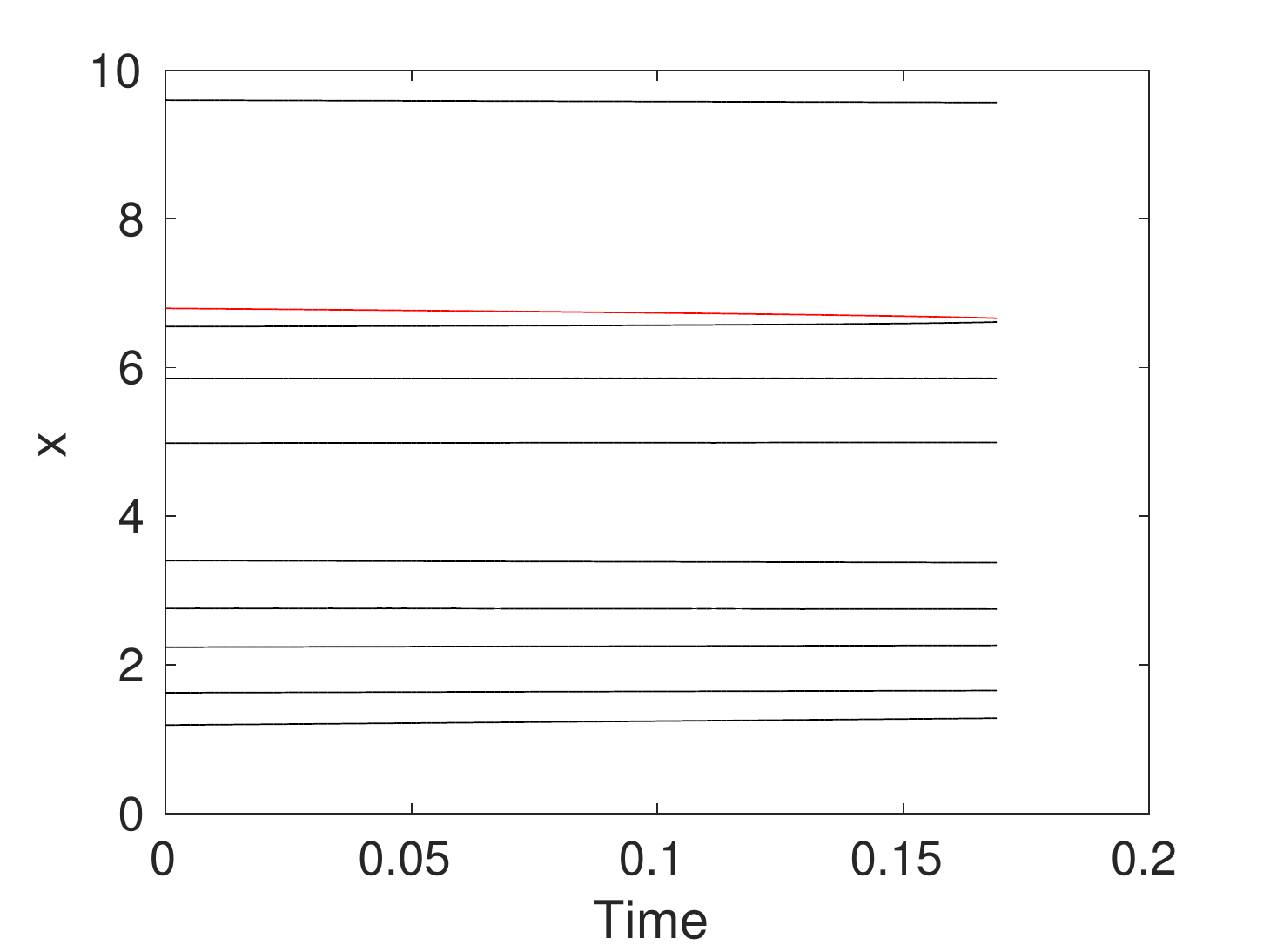}
\includegraphics[width=0.4\textwidth]{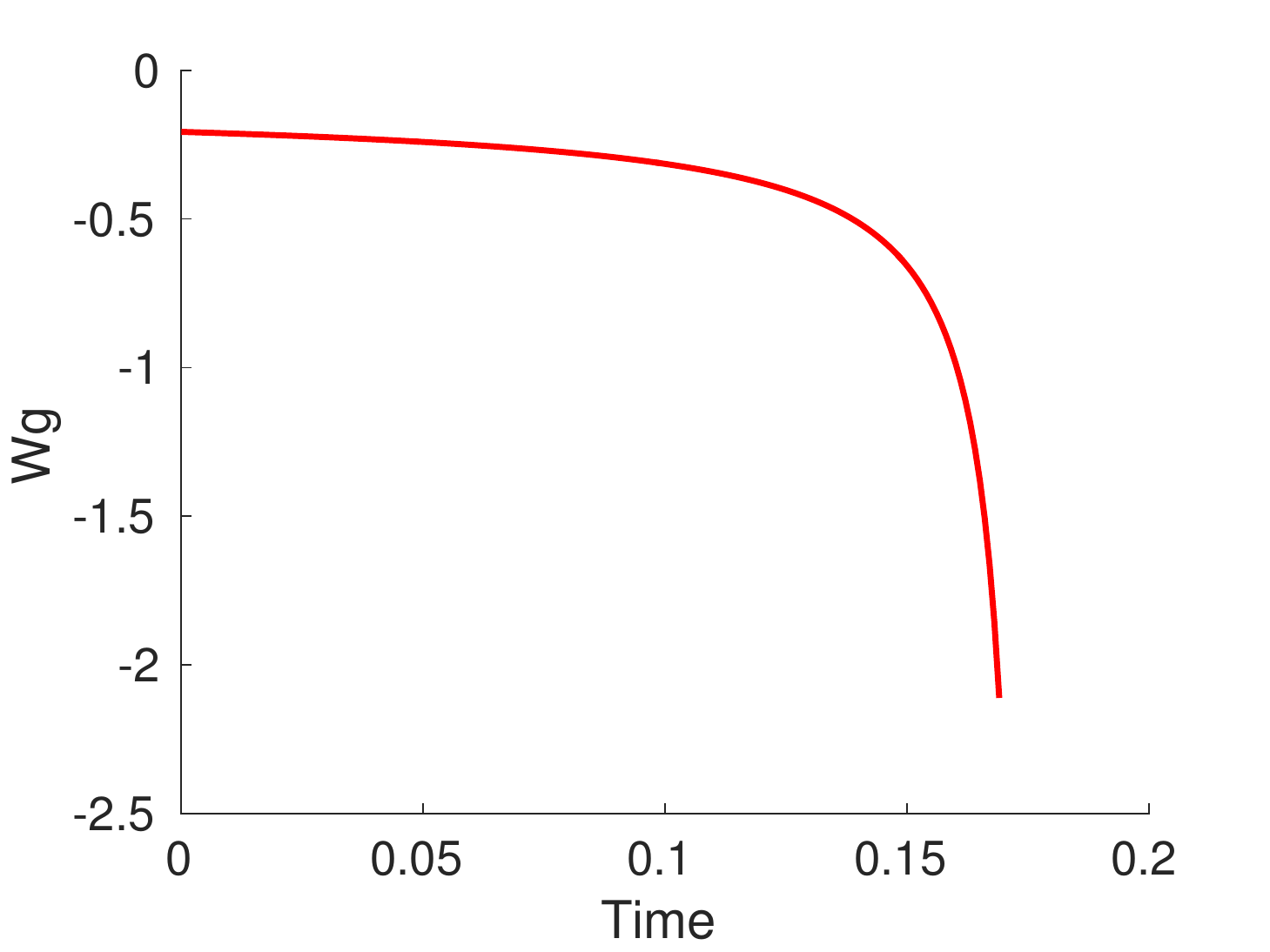}
\caption{ With $M = 0.16 < \frac{1}{N\sqrt{2V(0)}}$, the system satisfies $x_0\in\mathcal{R}^M_{\mathrm{BH}}$, and converges to the clustering set in finite time. Left: Evolution of the 10 agents' positions (the controlled agent is in red). Right: Evolution of the generalized entropy.}
\label{fig:BH_10}
\end{figure}

\begin{figure}[H]
\centering
\includegraphics[width=0.4\textwidth]{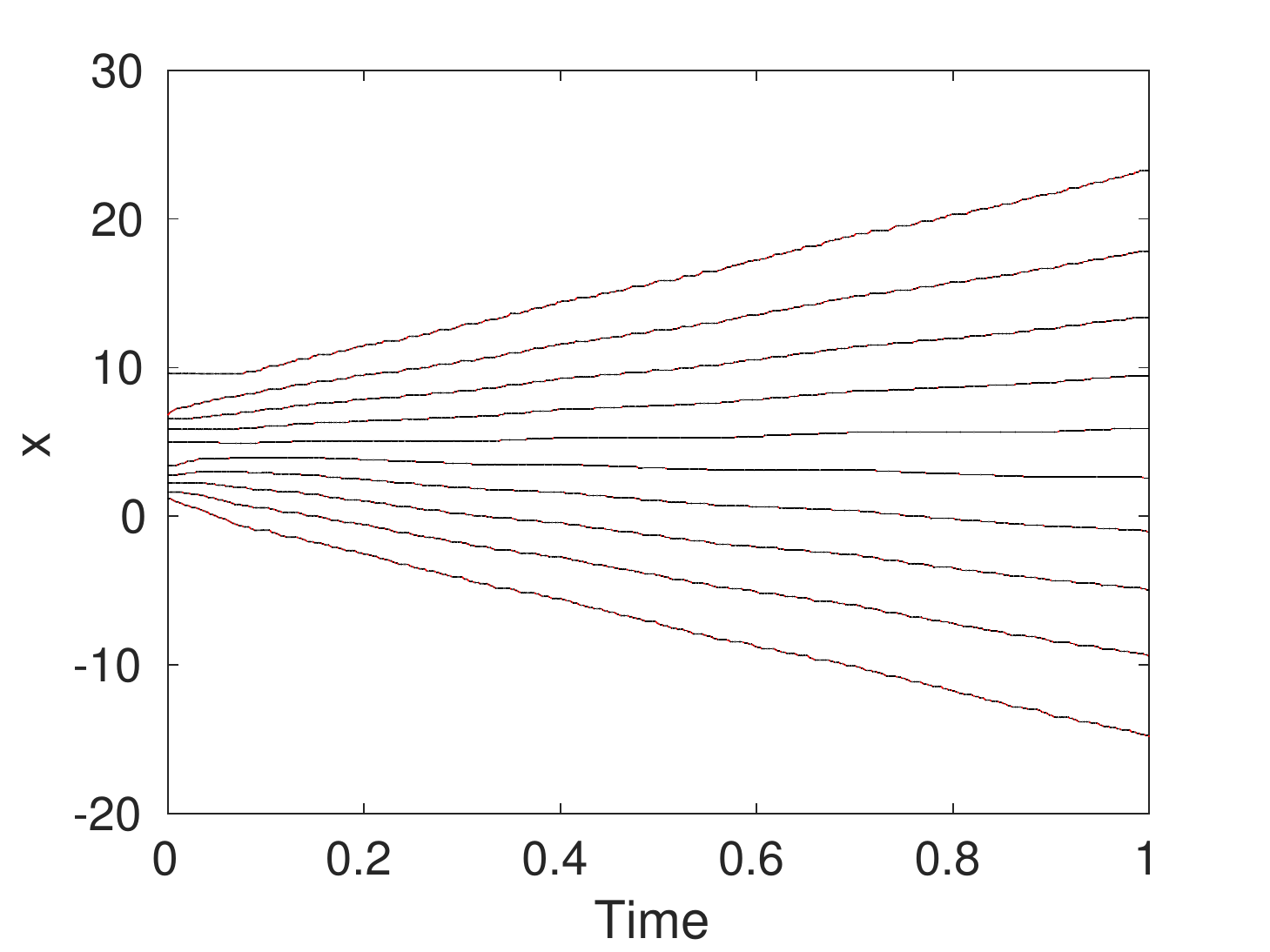}
\includegraphics[width=0.4\textwidth]{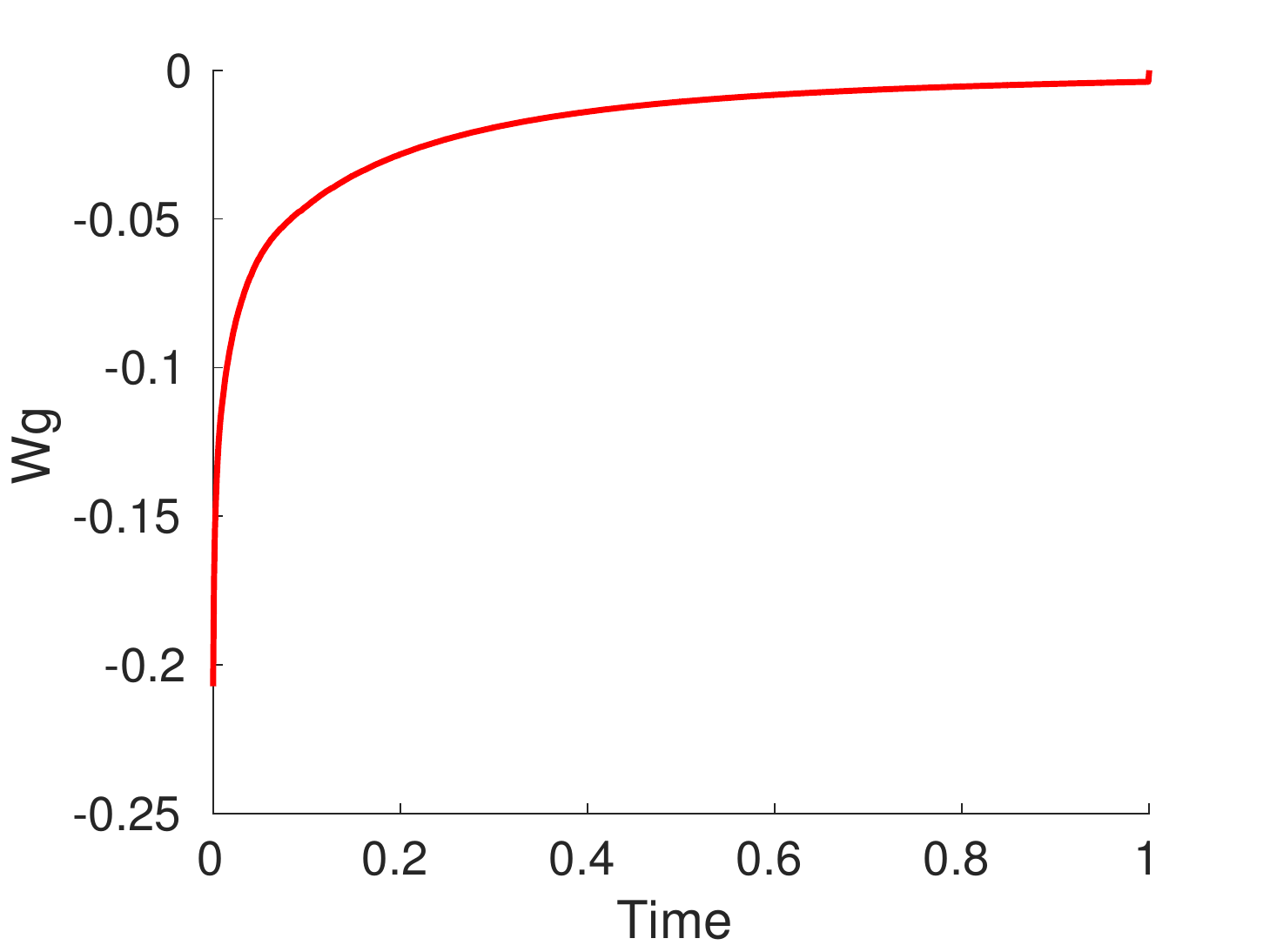}
\caption{ With $M = 0.16 > N^2\sqrt{-2W_g(0)}$, the system satisfies $x_0\in\mathcal{R}^M_{\mathrm{S}}$, and the control $u_W$ manages to steer the system away from the consensus set. Left: Evolution of the 10 agents' positions (the controlled agent is in red). Right: Evolution of the generalized entropy.}
\label{fig:SZ_10}
\end{figure}

\subsection{Basin of attraction and collapse prevention} 

To illustrate the cases of Sections \ref{Sec:basin} and \ref{Sec:CollapsePrev}, we now consider the interaction function
$a:s\mapsto\frac{1}{\sqrt{s}}$. Notice that $a$ satisfies the condition for the existence of a \textit{basin of attraction} ( $\lim\limits_{s\rightarrow +\infty} sa(s) = +\infty$) and the condition for the possibility of \textit{collapse prevention} ($\lim\limits_{s\rightarrow 0} sa(s) = 0$). Let $N=10$ and $M=1$. 
\begin{itemize}
\item By Theorem \ref{th:basin}, there exists $T>0$ such that 
$
x(T)\in B:= \{(x_i)_{i\in\{1,\ldots,N\}} \; | \min\limits_{i\neq j} \|x_i-x_j\| \leq 1\}.
$
\item By Theorem \ref{th:clusteringprevention}, there exists $\kappa>0$ such that 
$
\forall t\geq 0, \; \forall (i,j)\in\{1,\ldots,N\}^2, \; \|x_i(t)-x_j(t)\| \geq \kappa.
$
\end{itemize}

Figure \ref{fig:BA_10} shows the evolution of a system initially outside of the \textit{basin of attraction} $B$, \textit{i.e.}, $\|x_i(0)-x_j(0)\|>1$ for all $(i,j)\in\{1,\ldots,N\}^2$ (with $W_g(0)=-0.2$). Despite being controlled with the strategy $u_W$ defined in Section \ref{Sec:Entropy}, the system converges to $B$. However, notice that the generalized entropy does not tend to $-\infty$, as the system is able to prevent convergence to the clustering set. 
Figure \ref{fig:CP_10} shows the evolution of a system initially very close to the clustering set ($W_g(0) = -7.2$). The control strategy defined in Theorem \ref{th:clusteringprevention} is successful in preventing clustering.

\begin{figure}[H]
\centering
\includegraphics[width=0.4\textwidth]{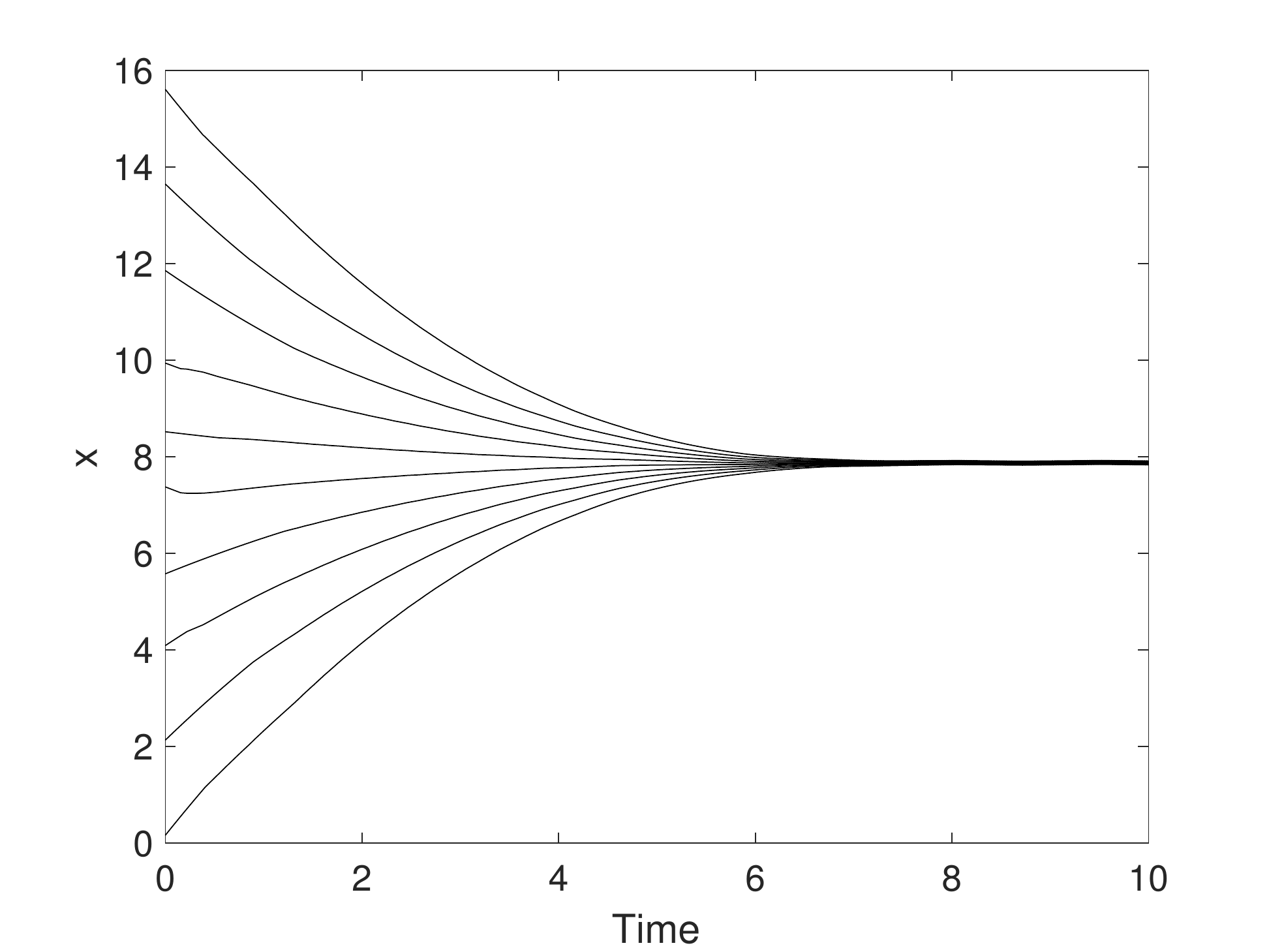}
\includegraphics[width=0.4\textwidth]{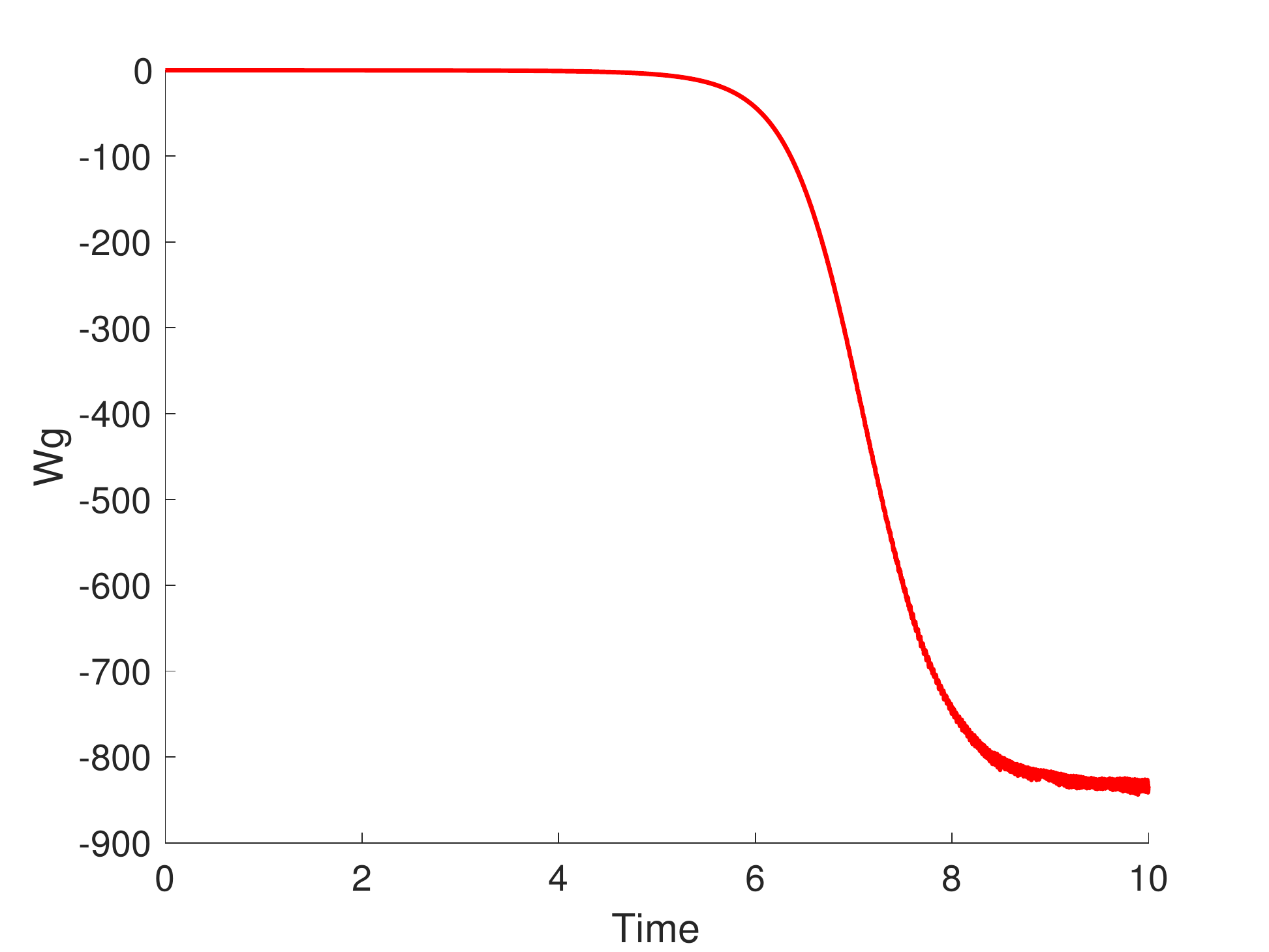}
\caption{  No control can prevent the convergence of the system to the \textit{basin of attraction} $B = \{(x_i)_{i\in\{1,\ldots,N\}} \; | \min\limits_{i\neq j} \|x_i-x_j\| \leq 1\}$.
Left: Evolution of the 10 agents' positions. Right: Evolution of the generalized entropy.}
\label{fig:BA_10}
\end{figure}

\begin{figure}[H]
\centering
\includegraphics[width=0.4\textwidth]{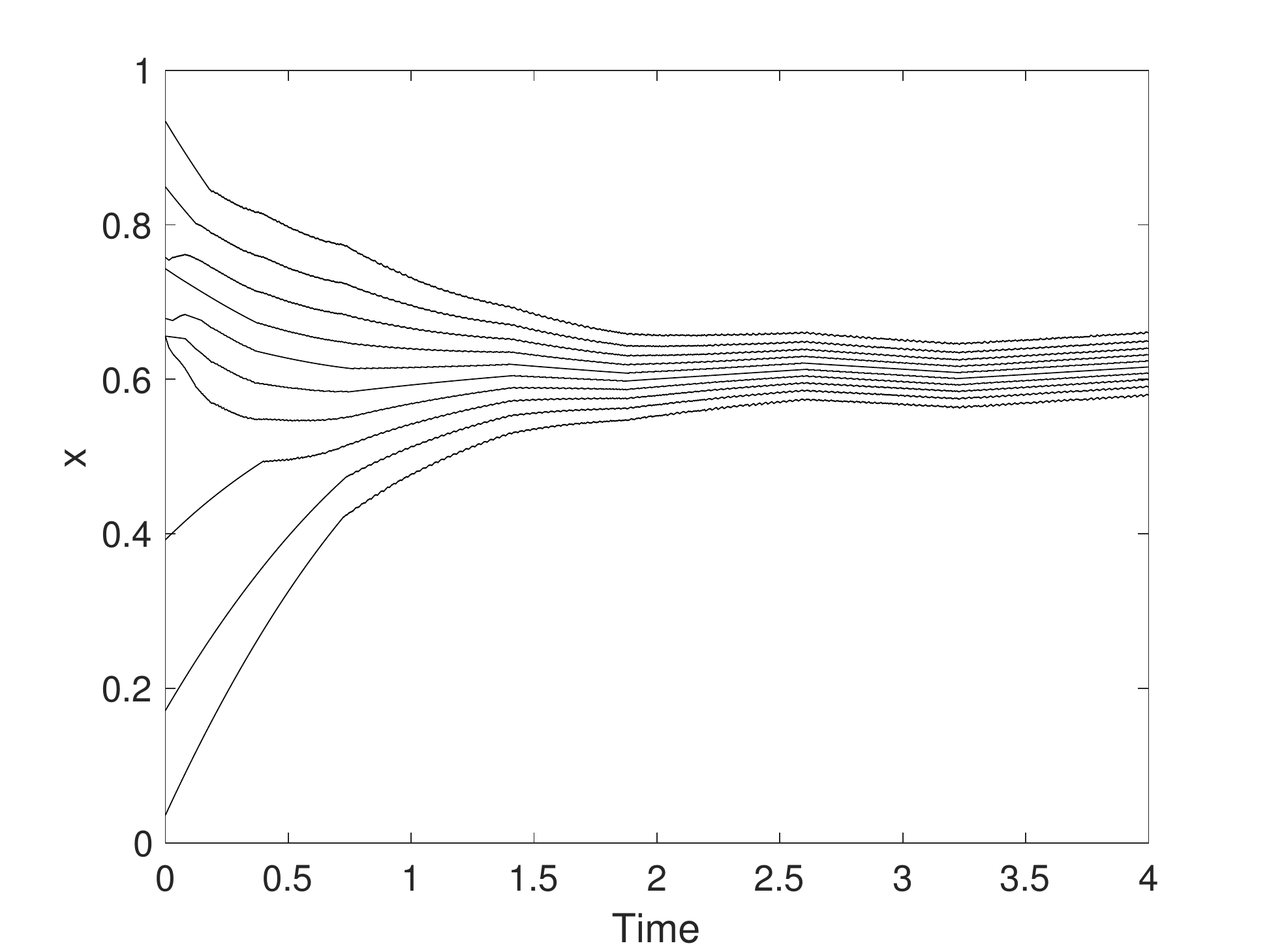}
\includegraphics[width=0.4\textwidth]{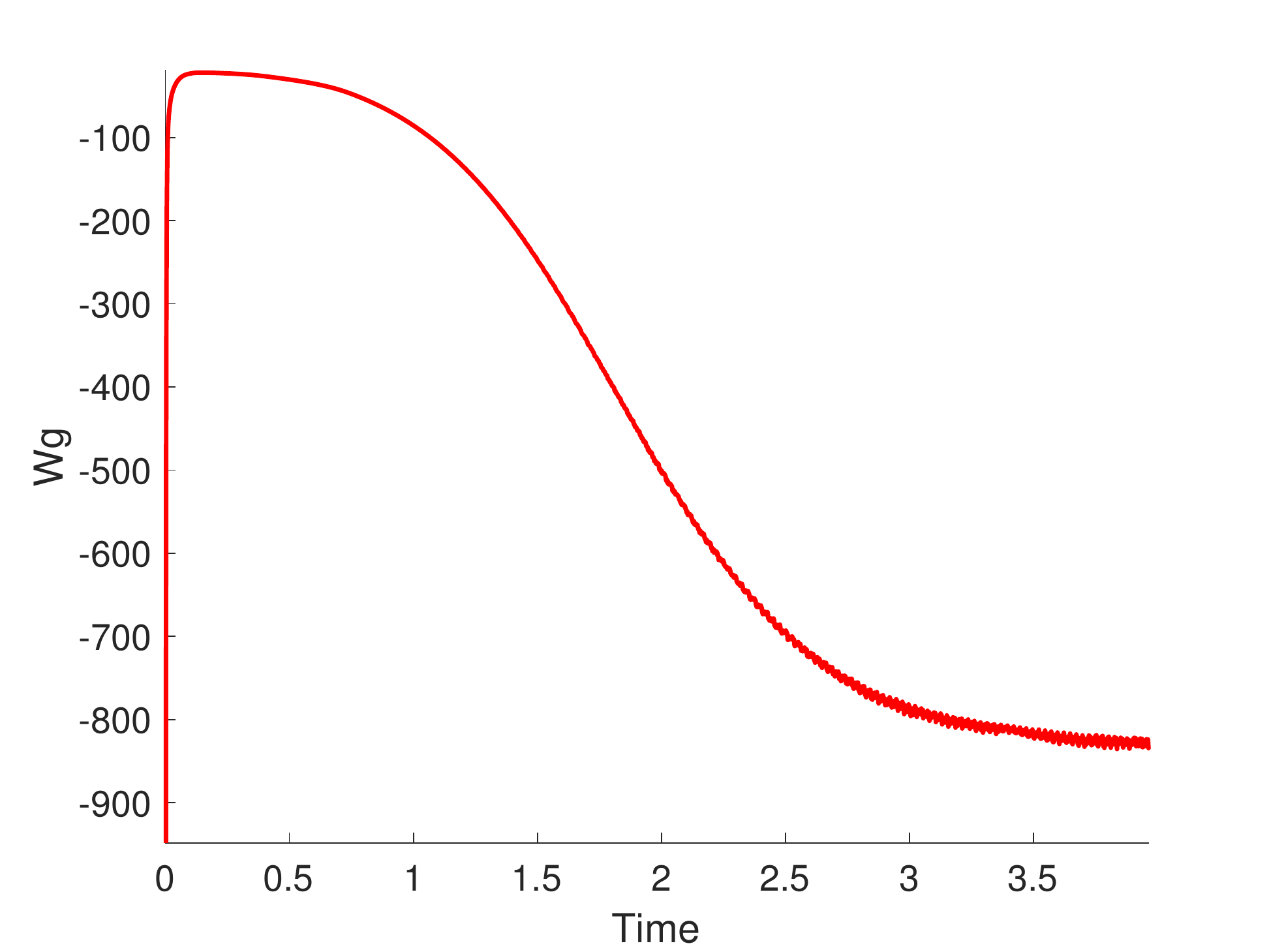}
\caption{ The control $u_W$ steers the system away from the clustering set. Left: Evolution of the 10 agents' positions. Right: Evolution of the generalized entropy.}
\label{fig:CP_10}
\end{figure}

\section*{Conclusion and further comments}

The problem of controlling collective dynamics systems to achieve consensus or alignment has been frequently studied. In this paper, we focus on the opposite problem: controlling a system that naturally converges to consensus to achieve \textit{declustering}. 
We first remark that the standard variance used to characterize consensus (with the equivalence $V=0 \Leftrightarrow \forall (i,j)\in\{1,\ldots N\}^2,\; x_i=x_j$) does not measure declustering. Instead, to characterize the state of declustering, we introduce a generalized entropy functional $W_g$ with the property: $W_g > c \Leftrightarrow \forall (i,j)\in\{1,\ldots N\}^2, \; x_i\neq x_j$.
With this tool in hand, we design a control strategy aiming to prevent the system from clustering  by maximizing instantaneously the time derivative of $W_g$. The control thus constructed is \textit{sparse}, meaning that it only acts on one agent at a time. 
As opposed to the problem of achieving consensus, here we fight against the system's natural tendency to form clusters. For this reason we do not expect to succeed in every situation.  
Indeed, the analysis of the first-order opinion formation model \eqref{eq:Krause} with a positive interaction function $a(\cdot)$ and additive control reveals the existence of four regions of $(\R^d)^N$ that determine whether the system can be maintained away from consensus or clustering (see Table \ref{table:summary}). The behavior of $a(\cdot)$ near zero determines the existence either of a black hole region or of the possibility to prevent collapse of the system. 
In the black hole region, no control can keep the system away from consensus, whereas in the case of collapse prevention there exist controls that can keep the system from clustering. The black hole region and the collapse prevention region can coexist with either a safety region or a basin of attraction, which are determined by the behavior of $a(\cdot)$ at infinity. In the safety region, the system can be kept far from the clustering set, given suitable initial conditions. On the opposite, in the case of a basin of attraction, the system is attracted to a neighborhood of the clustering set. 

As seen in Section \ref{Sec:kinetic}, most results for the microscopic model can be extended to the kinetic equation \eqref{eq:kindym}. 
We define kinetic clustering as the presence of one or more Dirac masses in the population density, and we show that declustering can be characterized by the kinetic version of the generalized entropy. Similarly to the microscopic case, we design sparse control strategies maximizing the time derivative of the kinetic generalized entropy instantaneously. 
To extend the notion of sparse control to the PDE framework, we set a bound on the size of the support of the control. 
As in the microscopic setting, we show the existence of the four zones determined by the behavior of $a(\cdot)$ at zero (the black hole and the collapse prevention zones) and at infinity (the safety zone or the basin of attraction).

Lastly, in Section \ref{Sec:simu}, we present numerical simulations illustrating those four situations in the microscopic case, with two examples of interaction functions. With $a:s\mapsto s^{-2}$, we observe the coexistence of a black hole and of a safety region. Given the same initial conditions, the convergence to consensus or the avoidance of clustering are determined by the allowed strength of the control $M$.
With the interaction function $a:s\mapsto s^{-1/2}$ and a fixed bound on the control $M=1$, if it is initially far from the clustering set, the system converges to a basin of attraction. However, if the initial conditions are already in a neighborhood of the clustering set, we show that collapse to clustering can be avoided. 
\\

This work can be extended in many ways. We list a few of the possible future directions that stem naturally from the results presented above.
\paragraph{Sparse kinetic control.} The crucial notion of \textit{sparse} control in finite-dimension can be extended to the kinetic setting in various ways. In this work, we made the choice of designing controls of the form $u\chi_\omega$, where we not only bound the $L^\infty$ norm of $u$, but also the size of the controlled region $\omega$, with the condition $\int_\omega dx \leq c$ for some positive constant $c$. Another way to impose a notion of kinetic sparsity would be to bound the size of the controlled population, with the condition $\int_\omega d\mu(x) \leq c$, as done for example in \cite{PRT15}. 
In the context of clustering prevention, this condition is more restrictive, as we want to act mainly on the concentrated part of the population. One could investigating whether the results from Section \ref{Sec:kinetic} hold for controls that satisfy $\int_\omega d\mu(x) \leq c$.
\paragraph{Black hole horizon.} The analysis of the model in Section \ref{Sec:finite} and the numerical simulations in Section \ref{Sec:simu} show the possible coexistence of a black hole and a safety region. One could go further in investigating the nature of the boundary between the two regions, and the existence of the \textit{black hole horizon}. Two scenarios can be anticipated: either the black hole and the safety region form all of the state space, with $(\R^d)^N = \mathcal{R}^M_\mathrm{BH}\cup \mathcal{R}^M_\mathrm{S}$, or there exists a black hole horizon, \textit{i.e.} a region of the state space that is neither a black hole nor a safety region $\mathcal{H}^M_{\mathrm{BH}} = (\R^d)^N \setminus (\mathcal{R}^M_\mathrm{BH}\cup \mathcal{R}^M_\mathrm{S})$.
\paragraph{Optimal control in finite dimension.} The sparse controls designed to prevent the system from clustering minimize instantaneously the generalized entropy $W_g$. One could instead look for global minimizers of the functional $W_g$ in order to design optimal control strategies. 
\paragraph{Second-order clustering.}
A natural future direction for this work would consist in investigating second-order clustering, in a model inspired by that of Cucker and Smale \cite{CS07}. 
Second-order models are commonly applied to animal groups to study coordinated
collective behavior \cite{ACMPPRT17, L13}. 
In this framework, the variable of interest is the velocity, and agreement of all agents' velocities is referred to as ``alignment'' \cite{CFPT13, CFPT15}. It follows quite naturally to define clustering as agreement of several agents' velocities, and one could conduct a similar analysis of the resulting model, defining the second-order generalized entropy using the velocity variables.
\paragraph{Well-posedness of the kinetic aggregation equation.}
The well-posedness of the kinetic equation \eqref{eq:kindym} with aggregation phenomena is currently being investigated by several groups (see for instance \cite{BLR11} and \cite{CCR11} and references within). 
The existence and uniqueness of a solution to \eqref{eq:kindym} for singular interaction potentials is a highly non-trivial problem. Intuitively, highly attractive interaction functions can give rise to a finite-time blow-up of the solution. This blow-up need not happen instantaneously and one could observe the coexistence of one or several Dirac masses with absolutely continuous parts of the measure in between them. A possible approach to define weak measure solutions would be through measure differential equations, which would allow to prolong solutions past blow-up times. One could then go further and define control in a new framework, that of measure differential equations \cite{P17}.

\newpage 
\vfill
\newpage


\bibliography{BlackSwan-Biblio}{}
\bibliographystyle{abbrv}

\end{document}